\setlist[enumerate,1]{label={\upshape(\roman*)}}
    \newcommand{\Rmnum}[1]
    {\expandafter\@slowromancap\romannumeral #1@}
\newtheorem{thm}{Theorem}[section]
\newtheorem{prop}[thm]{Proposition}
\newtheorem{lemma}[thm]{Lemma}
\newcounter{foo}[section]
\newcounter{fooo}[subsection]
\newtheorem{step}[foo]{Step}
\newtheorem{stepp}[fooo]{Step}
\newtheorem{substep}{Step}[foo]
\newtheorem{example}[thm]{Example}
\newtheorem{defin}[thm]{Definition}
\theoremstyle{definition}
\newtheorem{remark}[thm]{Remark}
\def\wz{\tilde}
\title[Semicomplete multipartite weakly distance-regular digraphs]{Semicomplete multipartite weakly distance-regular digraphs}
\date{}
\thanks{*Corresponding author}
\author[Li]{Shuang Li}
\address{Laboratory of Mathematics and Complex Systems (MOE),~School of Mathematical Sciences\\Beijing Normal University\\Beijing 100875\\China}
\email{lishuangyx@mail.bnu.edu.cn}
\author[Yang]{Yuefeng Yang*}
\address{School of Science\\China University of Geosciences\\Beijing 100083\\China}
\email{yangyf@cugb.edu.cn}
\author[Wang]{Kaishun Wang}
\address{Laboratory of Mathematics and Complex Systems (MOE),~School of Mathematical Sciences\\Beijing Normal University\\Beijing 100875\\China}
\email{wangks@bnu.edu.cn}
\begin{document}
\begin{abstract}
	  A digraph is semicomplete multipartite if its underlying graph is a complete multipartite graph. As a special case of semicomplete multipartite digraphs, J{\o}rgensen et al. \cite{JG14} initiated the study of  doubly regular team tournaments.
	  As a natural extension, we introduce  doubly regular team semicomplete multipartite digraphs and show that such digraphs fall into three types. Furthermore, we give a characterization of all semicomplete multipartite commutative weakly distance-regular digraphs.
\end{abstract}

\keywords{Weakly distance-regular digraph; association scheme; semicomplete multipartite; doubly regular team tournament.}

\subjclass[2010]{05E30}

\maketitle
\section{Introduction}
A \emph{digraph} $\Gamma$ is a pair $(V\Gamma,A\Gamma)$, where $V\Gamma$ is a finite nonempty set of vertices and $A\Gamma$ is a set of ordered pairs ({\em arcs}) $(x,y)$ with distinct vertices $x$ and $y$. A subdigraph of $\Gamma$ induced by a subset $U\subseteq V\Gamma$ is denoted by $\Gamma[U]$. For any arc $(x,y)\in A\Gamma$, if $A\Gamma$ also contains an arc $(y,x)$, then $\{(x,y),(y,x)\}$ can be viewed as an {\em edge}. We say that $\Gamma$ is an \emph{undirected graph} or a {\em graph} if $A\Gamma$ is a symmetric relation. A vertex $x$ is {\em adjacent} to $y$ if $(x,y)\in A\Gamma$. In this case, we also call $y$ an \emph{out-neighbour} of $x$, and $x$ an \emph{in-neighbour} of $y$. The set of all out-neighbours of $x$ is denoted by $N_{\Gamma}^{+}(x)$, while the set of in-neighbours is denoted by $N_{\Gamma}^{-}(x)$. If no confusion occurs, we write $N^{+}(x)$ (resp. $N^{-}(x)$) instead of $N_{\Gamma}^{+}(x)$ (resp. $N_{\Gamma}^{-}(x)$).   A digraph is said to be {\em regular of valency} $k$ if the number of in-neighbour and out-neighbour of all vertices are equal to $k$. The
\emph{adjacency matrix} $A$ of $\Gamma$ is the $|V\Gamma|\times |V\Gamma|$ matrix whose $(x,y)$-entry is $1$ if $y\in N^{+}(x)$, and
$0$ otherwise. 




 For the digraphs $\Gamma$ and $\Sigma$, the {\em lexicographic product} $\Gamma\circ\Sigma$ is the digraph with vertex set $V\Gamma\times V\Sigma$ such that $((u_1,u_2),(v_1,v_2))$ is an arc if and only if either $(u_1,v_1)\in A\Gamma$, or
$u_1=v_1$ and $(u_2,v_2)\in A\Sigma$. We say that $\Gamma\circ \Sigma$ is an {\em$m$-coclique extension} of $\Gamma$ if $\Sigma$ is an independent set of size $m$.

A \emph{path} of length $r$ from $x$ to $y$ in the digraph $\Gamma$ is a finite sequence of vertices $(x=w_{0},w_{1},\ldots,w_{r}=y)$ such that $(w_{t-1}, w_{t})\in A\Gamma$ for $1\leq t\leq r$. A digraph (resp. graph) is said to be \emph{strongly connected} (resp. \emph{connected}) if, for any vertices $x$ and $y$, there is a path from $x$ to $y$. A path $(w_{0},w_{1},\ldots,w_{r-1})$ is called a \emph{circuit} of length $r$ when $(w_{r-1},w_0)\in A\Gamma$. The \emph{girth} of $\Gamma$ is the length of a shortest circuit in $\Gamma$.  The length of a shortest path from $x$ to $y$ is called the \emph{distance} from $x$ to $y$ in $\Gamma$, denoted by $\partial_\Gamma(x,y)$. The maximum value of the distance function in $\Gamma$ is called the \emph{diameter} of $\Gamma$. Let $\wz{\partial}_{\Gamma}(x,y):=(\partial_{\Gamma}(x,y),\partial_{\Gamma}(y,x))$ be the \emph{two-way distance} from $x$ to $y$, and $\wz{\partial}(\Gamma):=\{\wz{\partial}_{\Gamma}(x,y)\mid x,y\in V\Gamma\}$ the \emph{two-way distance set} of $\Gamma$. If no confusion occurs, we write $\partial(x,y)$ (resp. $\tilde{\partial}(x,y)$) instead of $\partial_\Gamma(x,y)$ (resp. ${\tilde{\partial}}_\Gamma(x,y)$).  For any $\wz{i}:=(a,b)\in\wz{\partial}(\Gamma)$, we define $\Gamma_{\wz{i}}$ to be the set of ordered pairs $(x,y)$ with $\wz{\partial}(x,y)=\wz{i}$, and write $\Gamma_{a,b}$ instead of $\Gamma_{(a,b)}$. An arc $(x,y)$ of $\Gamma$ is of {\em type} $(1,r)$ if $\partial(y,x)=r$. Let $T$ be the set of integers $q$ such that $(1,q-1)$ is a type of $\Gamma$. That is, $T=\{q\mid (1,q-1)\in \tilde{\partial}(\Gamma)\}$.

In \cite{KSW03}, the third author and Suzuki proposed a natural directed version of a distance-regular graph (see \cite{AEB98,DKT16} for a background of the theory of distance-regular graphs) without bounded diameter, i.e., a weakly distance-regular digraph. A strongly connected digraph $\Gamma$ is said to be \emph{weakly distance-regular} if the configuration $\mathfrak{X}(\Gamma)=(V\Gamma,\{\Gamma_{\wz{i}}\}_{\wz{i}\in \wz{\partial}(\Gamma)})$ is a non-symmetric association scheme (see the definition of  association scheme in Section 2). We call $\mathfrak{X}(\Gamma)$ the \emph{attached scheme} of $\Gamma$. A weakly distance-regular digraph is \emph{commutative} if its attached scheme is commutative.
For more information about weakly distance-regular digraphs, see \cite{HS04,KSW03,KSW04,YYF16,YYF18,YF22,AM,YYF20,YL23,YYF22,QZ23}. 

For a digraph $\Gamma$, we form the {\em underlying graph} of $\Gamma$ with the same vertex set, and there is an edge between vertices $x$ and $y$ whenever $(x,y)\in A\Gamma$ or $(y,x)\in A\Gamma$. A digraph is {\em semicomplete}, if  its underlying graph is a complete graph.  A digraph $\Gamma$ is {\em semicomplete multipartite}, if  its underlying graph is a complete multipartite graph $K_{n_1,\ldots,n_m}$  with $m\geq 2$ parts and part sizes $n_1,\ldots,n_m\geq 2$.

In \cite{YYF1,YYF2}, the authors classified all commutative weakly distance-regular digraphs whose
underlying graphs are well-known distance-regular graphs, such as Hamming graphs, folded n-cubes,  Doob graphs, Johnson graphs or folded Johnson graphs.

In this paper, we continue this project, and give a characterization of all semicomplete multipartite commutative weakly distance-regular digraphs.

\begin{thm}\label{xushumain4}
A  commutative weakly distance-regular digraph is semicomplete multipartite if and only if it is isomorphic to one of the following digraphs:
\begin{enumerate}
	\item\label{main1-fu2} 
	an $n$-coclique extension of ${\rm Cay}(\mathbb{Z}_{6},\{1,2\})$, where $n\geq 1$;

\item\label{main1-fu7} 
an $l$-coclique extension of a semicomplete weakly distance-regular digraph with girth $2$ or $3$, where $l\geq 2$;	

\item\label{main1-fu0}
	an $n$-coclique extension of $\Sigma\circ C_4$, where $\Sigma$ is a semicomplete weakly distance-regular digraph with girth $3$, where $n\geq 1$;

\item\label{main1-fu5} 
an $n$-coclique extension of a doubly regular $(k,l)$-team tournament of Type  \Rmnum{2} with parameters $(\frac{(k-2)l}{4},\frac{(k-2)l}{4},\frac{l^2(k-1)}{4(l-1)})$, where $k,l\geq 2$ and $n\geq 1$ (see Definition \ref{main defin0});	

	\item\label{main1-fu1} 
a doubly regular $(k,m)$-team semicomplete multipartite digraph of  Type II with $k,m\geq 2$ (see Definition \ref{main defin}).
\end{enumerate}
\end{thm}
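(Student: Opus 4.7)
The argument splits into the two natural directions.

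\emph{Sufficiency.} For each listed family I would verify directly that the digraph is semicomplete multipartite and commutative weakly distance-regular. In cases \eqref{main1-fu2}, \eqref{main1-fu7}, \eqref{main1-fu0}, and \eqref{main1-fu5}, the base digraph is already commutative WDR (by the hypothesis on $\Sigma$, by direct inspection of the small Cayley digraph Cay$(\mathbb{Z}_6,\{1,2\})$, or by the J{\o}rgensen et al.\ results on doubly regular team tournaments), and it is a standard fact that taking an $n$-coclique extension preserves both the WDR property and commutativity while turning the underlying graph into a complete multipartite one. In case \eqref{main1-fu0} one also checks that $\Sigma\circ C_4$ is itself semicomplete multipartite with parts of size $2$ coming from the fibers of $C_4$. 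Case \eqref{main1-fu1} is handled by a direct computation of intersection numbers from Definition \ref{main defin}.

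\emph{Necessity.} Let $\Gamma$ be commutative WDR with underlying graph $K_{n_1,\ldots,n_m}$ and parts $V_1,\ldots,V_m$. Since vertices in the same part are mutually non-adjacent, any two distinct such vertices $x,y$ satisfy $\wz{\partial}(x,y)=(a,a)$ for some $a\geq 2$. The first step is to analyze the \emph{same-part} relation $R=\{(x,y):x\ne y,\ x,y\in V_i\text{ for some }i\}$ and show that $R$ is a union of symmetric relations of the attached scheme $\mathfrak{X}(\Gamma)$; this uses the fact that for $i\ne j$ every vertex of $V_j$ is symmetrically joined to every vertex of $V_i$ in the underlying graph, forcing a uniform two-way distance pattern between parts.

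The core of the argument is then a dichotomy on whether $R$ is a single relation $\Gamma_{\wz{j}_0}$ of the scheme. If so, all parts have a common size $l\geq 2$ and the closed subset generated by $\Gamma_{\wz{j}_0}$, combined with commutativity, forces $\Gamma$ to be the $l$-coclique extension of a commutative WDR digraph $\Gamma'$ whose underlying graph is either semicomplete or again complete multipartite. A short recursion together with a girth analysis of the base $\Gamma'$ then produces cases \eqref{main1-fu2}, \eqref{main1-fu7}, \eqref{main1-fu0}, and \eqref{main1-fu5}: girth $2$ or $3$ gives \eqref{main1-fu7} directly, while girth $\geq 4$ forces $\Gamma'$ to be Cay$(\mathbb{Z}_6,\{1,2\})$, a product $\Sigma\circ C_4$, or a doubly regular team tournament of Type~\Rmnum{2}. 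If instead $R$ splits into several relations of the scheme, I would argue by a counting argument exploiting commutativity that the scheme parameters must satisfy the defining equations of a doubly regular $(k,m)$-team semicomplete multipartite digraph of Type~\Rmnum{2}, yielding case \eqref{main1-fu1}.

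The main obstacle I anticipate lies at the boundary of the two cases above: ruling out hybrid configurations in which the same-part relation is neither a single relation of the scheme nor cleanly a Type~\Rmnum{2} team structure, and pinning down the girth-$\geq 4$ coclique base to exactly the three listed families. I expect to overcome this by exploiting commutativity of the Bose--Mesner algebra to force rigid symmetry among the intersection numbers between $R$ and the arc-type relations of $\Gamma$; this symmetry should either collapse $R$ into a single relation of the scheme, producing a coclique extension, or lock the parameters into the Type~\Rmnum{2} system.
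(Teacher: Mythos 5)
Your necessity argument rests on a dichotomy that does not match the actual classification, and this is a genuine gap rather than a presentational issue. You propose: if the same-part relation $R$ is a single relation of the attached scheme, then $\Gamma$ is a coclique extension; if $R$ splits into several relations, then $\Gamma$ is a Type~\Rmnum{2} team digraph. Both implications fail. In the paper's case $T=\{2,3\}$ one has $\tilde{\partial}(\Gamma)=\{(0,0),(1,1),(1,2),(2,1),(2,2)\}$, so $R=\Gamma_{2,2}$ is always a single relation, yet $\Gamma$ is a genuine doubly regular team semicomplete multipartite digraph of Type~\Rmnum{2} (case \ref{main1-fu1}) precisely when $\Gamma_{2,2}\in\Gamma_{1,2}\Gamma_{1,1}\cup\Gamma_{1,2}^{2}$, and is a coclique extension only in the complementary subcase where $\Gamma_{1,q}\Gamma_{2,2}=\{\Gamma_{1,q}\}$. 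Likewise $\Sigma\circ C_4$ in case \ref{main1-fu0} has $R=\Gamma_{2,2}$ a single relation of valency $1$ but is not a coclique extension. Conversely, when $T=\{3,4\}$ and $(3,3)\in\tilde{\partial}(\Gamma)$ the same-part relation splits as $\Gamma_{2,2}\cup\Gamma_{3,3}$, and the outcome is a coclique extension (cases \ref{main1-fu2}, \ref{main1-fu0}), not a Type~\Rmnum{2} team digraph. The invariant that actually drives the proof is the set $T$ of arc types: one must first show every arc has type $(1,q-1)$ with $q\le 4$ and that $T$ is one of $\{3\},\{4\},\{3,4\},\{2,3\}$ (ruling out $\{2,4\}$ and $\{2,3,4\}$ requires a long intersection-number computation), and then, within each case, decide membership of $\Gamma_{2,2}$ in products such as $\Gamma_{1,2}\Gamma_{1,3}$ or $\Gamma_{1,2}\Gamma_{1,1}$. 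None of this appears in your plan, and the commutativity-based ``rigid symmetry'' you invoke at the boundary will not by itself collapse the analysis to your two cases.

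A secondary problem: in the sufficiency direction, it is not a standard fact that an $n$-coclique extension of a commutative weakly distance-regular digraph is again weakly distance-regular. If $\Sigma$ has girth $g$ and already contains pairs at two-way distance $(g,g)$, then in $\Sigma\circ\overline{K}_n$ the fiber pairs and the old $(g,g)$-pairs fall into the same relation but have different numbers of common out-neighbours, so the extension fails to be weakly distance-regular. The listed families work only because of their specific distance structure, which is why the paper appeals to \cite[Theorems 1.1, 3.1]{KSW03} and proves Lemma \ref{mainfan} for $\Sigma\circ C_4$ separately; your plan should verify this compatibility rather than assert preservation in general.
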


The remainder of this paper is organized as follows. In Section 2, we provide the required notation and concepts for association schemes. In Section 3,  as a natural extension of doubly regular team tournaments, we introduce the  definition of doubly regular team semicomplete multipartite digraphs and show that such digraphs fall into three types. In Section 4, we provide the required notation, concepts and preliminary results for a semicomplete multipartite weakly distance-regular digraph. In Section 5, we show that $|T|=1$, $T=\{3,4\}$ or $T=\{2,3\}$.
 In  Sections 6--8, we determine $\Gamma$ when $|T|=1$, $T=\{3,4\}$ or $T=\{2,3\}$, respectively. 
 In Section 9, we prove our main result based on the results in Sections 6--8.

\section{Association schemes}
In this section, we present some notation and concepts for association schemes that we shall use in the remainder of the paper.

A \emph{$d$-class association scheme} $\mathfrak{X}$ is a pair $(X,\{R_{i}\}_{i=0}^{d})$, where $X$ is a finite set, and each $R_{i}$ is a
nonempty subset of $X\times X$ satisfying the following axioms (see \cite{EB21,EB84,PHZ96,PHZ05} for a background of the theory of association schemes):
\begin{enumerate}
\item\label{as-1} $R_{0}=\{(x,x)\mid x\in X\}$ is the diagonal relation;

\item\label{as-2} $X\times X=R_{0}\cup R_{1}\cup\cdots\cup R_{d}$, $R_{i}\cap R_{j}=\emptyset~(i\neq j)$;

\item\label{as-3} for each $i$, $R_{i}^{\top}=R_{i^{*}}$ for some $0\leq i^{*}\leq d$, where $R_{i}^{\top}=\{(y,x)\mid(x,y)\in R_{i}\}$;

\item\label{as-4} for all $i,j,l$, the cardinality of the set $$P_{i,j}(x,y):=R_{i}(x)\cap R_{j^{*}}(y)$$ is constant whenever $(x,y)\in R_{l}$, where $R(x)=\{y\mid (x,y)\in R\}$ for $R\subseteq X\times X$ and $x\in X$. This constant is denoted by $p_{i,j}^{l}$.
\end{enumerate}
A $d$-class association scheme is also called an association scheme with $d$ classes (or even simply a scheme). The integers $p_{i,j}^{l}$ are called the \emph{intersection numbers} of $\mathfrak{X}$. We say that $\mathfrak{X}$ is \emph{commutative} if $p_{i,j}^{l}=p_{j,i}^{l}$ for all $i,j,l$. The subsets $R_{i}$ are called the \emph{relations} of $\mathfrak{X}$. For each $i$, the integer $k_{i}:=p_{i,i^{*}}^{0}$ is called the \emph{valency} of $R_{i}$.   A relation $R_{i}$ is called \emph{symmetric} if $i=i^{*}$, and \emph{non-symmetric} otherwise. An association scheme is called \emph{symmetric} if all relations are symmetric, and \emph{non-symmetric} otherwise.

Let $\mathfrak{X}=(X,\{R_{i}\}_{i=0}^{d})$ be an association scheme. For two nonempty subsets $E$ and $F$ of $\{R_{i}\}_{i=0}^{d}$, define
\[EF:=\{R_{l}\mid\sum_{R_{i}\in E}\sum_{R_{j}\in F}p_{i,j}^{l}\neq0\}.\]
We write $R_{i}R_{j}$ instead of $\{R_{i}\}\{R_{j}\}$,
and $R_i^2$ instead of $\{R_{i}\}\{R_{i}\}$.
If $R_{i^{*}}R_{j}\subseteq F$ for any $R_{i},R_{j}\in F$, we say that $F$ is {\em closed}.
For each $R_i$ with $0\leq i\leq d$, define $\left\langle R_i\right\rangle$ to be the smallest closed subset containing $R_i$.
 We say that $R_{j}$ {\em generates} $\mathfrak{X}$ if $\left\langle R_j\right\rangle=\{R_{i}\}_{i=0}^{d}$. We call $\mathfrak{X}$ \emph{primitive} if every non-diagonal relation generates $\mathfrak{X}$. Otherwise, $\mathfrak{X}$ is said to be \emph{imprimitive}.

Let $\mathfrak{X}=(X,\{R_{i}\}_{i=0}^{d})$ be a commutative association scheme with $|X|=n$. The \emph{adjacency matrix} $A_{i}$ of $R_{i}$ is the $n\times n$ matrix whose $(x,y)$-entry is $1$ if $(x,y)\in R_{i}$, and $0$ otherwise. By the \emph{adjacency} or \emph{Bose-Mesner algebra} $\mathfrak{U}$ of $\mathfrak{X}$ we mean the algebra generated by $A_{0},A_{1},\ldots,A_{d}$ over the complex field. Axioms \ref{as-1}--\ref{as-4} are equivalent to the following:
\[A_{0}=I,\quad \sum_{i=0}^{d}A_{i}=J,\quad A_{i}^{\top}=A_{i^{*}},
\quad A_{i}A_{j}=\sum_{l=0}^{d}p_{i,j}^{l}A_{l},\]
	where $J$ is the matrix with $1$ in every entry, and $I$ denotes the identity matrix.

We close this section with some basic properties of intersection numbers which are used frequently in this paper.

\begin{lemma}\label{jb}
	{\rm (\cite[Chapter \Rmnum{2}, Proposition 2.2]{EB84})} Let $(X,\{R_{i}\}_{i=0}^{d})$ be an association scheme. The following hold:
	\begin{enumerate}
		\item\label{jb-1} $k_{i}k_{j}=\sum_{l=0}^{d}p_{i,j}^{l}k_{l}$;
		
		\item\label{jb-2}  $p_{i,j}^{l}k_{l}=p_{l,j^{*}}^{i}k_{i}=p_{i^{*},l}^{j}k_{j}$;
		
		\item\label{jb-3}  $\sum_{j=0}^{d}p_{i,j}^{l}=k_{i}$;
		
		\item\label{jb-4} $\sum_{r=0}^{d}p_{i,l}^{r}p_{m,r}^{j}=\sum_{t=0}^{d}p_{m,i}^{t}p_{t,l}^{j}$.
	\end{enumerate}
\end{lemma}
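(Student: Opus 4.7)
The plan is to establish all four identities using double counting of configurations of triples of vertices together with the matricial reformulation $A_i A_j = \sum_l p_{i,j}^l A_l$, which is a direct restatement of the definition of the intersection numbers. Two preliminary facts will be used throughout: first, $k_l = k_{l^*}$ for every $l$, which follows from $|R_l| = |X|k_l$ together with $R_l^\top = R_{l^*}$; second, the time-reversal identity $p_{i,j}^l = p_{j^*,i^*}^{l^*}$, obtained by reversing each length-two walk $x \to z \to y$ counted on the left to a length-two walk $y \to z \to x$ and invoking $R_a^\top = R_{a^*}$.

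Identity (iii) comes directly from axiom (ii) of the scheme: for any $(x,y) \in R_l$, the sets $R_{j^*}(y)$ partition $X$ as $j$ ranges over $0,\dots,d$, so intersecting with $R_i(x)$ and taking cardinalities yields $k_i = \sum_j |R_i(x) \cap R_{j^*}(y)| = \sum_j p_{i,j}^l$. Identity (i) then drops out by applying $A_i A_j = \sum_l p_{i,j}^l A_l$ to the all-ones column vector $\mathbf{1}$, since each row of $A_a$ has exactly $k_a$ ones and hence $A_a \mathbf{1} = k_a \mathbf{1}$; equivalently, fix $x$ and count pairs $(z,y)$ with $(x,z) \in R_i$, $(z,y) \in R_j$, either by choosing $z$ then $y$ (yielding $k_i k_j$) or by grouping by the class $l$ of $(x,y)$ (yielding $\sum_l p_{i,j}^l k_l$).

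The main work lies in identity (ii), which I would prove by double counting triples $(x,y,z)$ with prescribed mutual relations, keeping one of the three vertices fixed. For the first equality $k_l \cdot p_{i,j}^l = k_i \cdot p_{l,j^*}^i$, fix $x$ and count pairs $(y,z)$ with $(x,y) \in R_l$, $(x,z) \in R_i$, $(z,y) \in R_j$: fixing $y$ first yields $k_l \cdot p_{i,j}^l$, while fixing $z$ first gives $k_i$ choices of $z$ followed by $|R_l(x) \cap R_j(z)| = p_{l,j^*}^i$ choices of $y$, by the definition applied to $(x,z) \in R_i$. For the second equality $k_l \cdot p_{i,j}^l = k_j \cdot p_{i^*,l}^j$, fix $y$ instead and count pairs $(x,z)$ with the same incidences; fixing $x$ first gives $k_l \cdot p_{i,j}^l$ (using $k_l = k_{l^*}$), while fixing $z$ first gives $k_j \cdot p_{l^*,i}^{j^*}$, which simplifies to $k_j \cdot p_{i^*,l}^j$ by the time-reversal identity. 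The main subtlety here is tracking the conjugate indices $i^*$, $j^*$, $l^*$ correctly, since commutativity is not assumed.

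Finally, identity (iv) is the associativity of multiplication in the algebra generated by the $A_i$: expanding both sides of $(A_m A_i) A_l = A_m (A_i A_l)$ by the rule $A_a A_b = \sum_c p_{a,b}^c A_c$ writes the same matrix as two linear combinations of $A_0, \dots, A_d$; since these matrices have pairwise disjoint supports, they are linearly independent, and equating the coefficient of $A_j$ yields $\sum_r p_{i,l}^r p_{m,r}^j = \sum_t p_{m,i}^t p_{t,l}^j$.
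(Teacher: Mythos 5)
Your proof is correct: the double-counting arguments for (i)--(iii) (including the careful tracking of the conjugations via $k_{l}=k_{l^*}$ and $p_{i,j}^{l}=p_{j^*,i^*}^{l^*}$) and the derivation of (iv) from associativity of the Bose--Mesner algebra all check out. The paper itself gives no proof of this lemma --- it is quoted from Bannai--Ito --- and your argument is the standard one found there, so there is nothing further to compare.
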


\section{Doubly regular team semicomplete multipartite digraphs}

Let $K_{n_1,\ldots,n_m}$ be a complete multipartite graph with $m\geq 2$ parts and part sizes $n_1,\ldots,n_m\geq 2$. The parts are called the {\em partite sets}. If all the partite sets have the same size $r$, this graph is denoted by $K^{m}_{r}$.

An \emph{$(m,r)$-team semicomplete multipartite digraph} is a   digraph with $K^{m}_{r}$ as its underlying graph, where $m,r\geq 2$.
An \emph{$(m,r)$-team tournament}   is an $(m,r)$-team semicomplete multipartite digraph with girth $g\geq 3$.

\begin{defin}{\rm (\cite[Definition 3.13]{JG14})}\label{main defin0}
A regular $(m,r)$-team tournament with adjacency matrix $A$ is said to be a {\em doubly regular $(m,r)$-team tournament with parameters $(\alpha,\beta,\gamma)$} if $$A^2=\alpha A+\beta A^{\top}+\gamma(J-I-A-A^{\top}).$$ 	
\end{defin}

 J{\o}rgensen et al. \cite{JG14}  proved   that a doubly regular $(m,r)$-team tournament is of Type \Rmnum{1}, \Rmnum{2}, or \Rmnum{3}. Let $D$ be a doubly regular $(m,r)$-team tournament with parameters $(\alpha,\beta,\gamma)$, where $VD=V_1\cup \cdots \cup V_m$ is the partition of the vertex set into $m$ independent sets of size $r$.
Recall that $D$ is of {\em Type \Rmnum{2}} if $\beta-\alpha=0$, $r$ is even and $|N^+(x)\cap V_i|=\frac{r}{2}$ for all $x\in VD\setminus V_i$ and $i\in \{1,\ldots,m\}$.

Naturally, we consider the general case: $(m,r)$-team semicomplete multipartite digraphs.  For a digraph $\Gamma$, set $E\Gamma=\{(x,y)\mid (x,y),(y,x)\in A\Gamma\}$, $\overline{\Gamma}:=(V\Gamma,E\Gamma)$, $\vec{\Gamma}:=(V\Gamma,A\Gamma\backslash E\Gamma)$.



\begin{defin}\label{main defin}
Let $\Gamma$ be an $(m,r)$-team semicomplete multipartite digraph with valency $k$. Let  $A_0$ be the adjacency matrix  of  $\overline{\Gamma}$, and $A_1$ be the adjacency matrix of $\vec{\Gamma}$.  We say that $\Gamma$ is  doubly regular if there exist integers  $t$, $\alpha_{i+j}$, $\beta_{i+j}$, $\gamma_{i+j}$, $\eta_{i+j}$ for $0\leq i,j\leq 1$ such that$$
A_iA_j=t\delta_{0,i+j}I+\alpha_{i+j} A_1+\beta_{i+j} A_1^{\top}+\gamma_{i+j} A_0+\eta_{i+j} (J-I-A_1-A_1^{\top}-A_0),$$
	where $\delta$ is Kronecker's delta.
\end{defin}

Note that the equation $$A_1^2=\alpha_{2} A_1+\beta_{2} A_1^{\top}+\gamma_{2} A_0+\eta_{2} (J-I-A_1-A_1^{\top}-A_0)$$ means that the number of  paths of length $2$ in $\vec{\Gamma}$ from a vertex $x$ to a vertex $y$ is $\alpha_2$  if $(x,y)\in A\vec{\Gamma}$,
$\beta_2$ if $(y,x)\in A\vec{\Gamma}$, $\gamma_2$ if $(x,y)\in E\Gamma$, and $\eta_2$ if $(x,y)\notin A\Gamma\cup {A\Gamma}^\top$. 
The meanings of other equations  are similar. Thus,  $\alpha_0=\beta_0$, and the valency of $\overline{\Gamma}$ is $t$, while the valency of $\vec{\Gamma}$ is $k-t$.

In the remainder of this section, we always denote $\Gamma$  as a doubly regular $(m,r)$-team semicomplete multipartite digraph. Let $A_0$, $A_1$, $k$, $t$, $\alpha_i$, $\beta_i$, $\gamma_i$, $\eta_i$ for $0\leq i\leq 2$ be as in Definition \ref{main defin} and  $V\Gamma=V_1\cup V_2\cup \cdots\cup V_m$ be the partition of the vertex set into $m$ independent sets of sizes $r$. For $x\in V_i$, define $A_{j}^{+}(x):=A\vec{\Gamma}(x)\cap V_j$, $A_{j}^{-}(x):={A\vec{\Gamma}}^\top(x)\cap V_j$ and $E_{j}(x):=E\Gamma(x)\cap V_j$.

Let   $(i,j)$ be a pair with $i\ne j$.   If $|E_{j}(x)|$ does not depend on the choice of vertex $x\in V_i$, we set $e_{ij}:=|E_{j}(x)|$. If $|A_{j}^{+}(x)|$ does not depend on the choice of vertex $x\in V_i$, we set $c_{ij}:=|A_{j}^{+}(x)|$.

Next, we will prove that $\Gamma$ is of one of three types:
\begin{enumerate}
	\item[Type I.]\label{main thm-1} $\beta_1+\beta_2-\alpha_1-\alpha_2=r$ and $\Gamma$ is a $r$-coclique extension of a semicomplete digraph;

	\item[Type II.]\label{main thm-2} $\beta_1+\beta_2-\alpha_1-\alpha_2=0$ and $c_{ij}=c_{ji}=\frac{r-e_{ij}}{2}$;

	\item[Type III.]\label{main thm-3}
	$\beta_1+\beta_2-\alpha_1-\alpha_2=\frac{r}{2}$ and for each pair $(i,j)$ with $i\ne j$, the one of the following holds:
	\begin{enumerate}
		\item[\rm (i)]\label{main thm-3-3}  $V_i\times V_j\subseteq E\Gamma$;
		
		\item[\rm (ii)]\label{main thm-3-2} either $V_i$ is partitioned into two nonempty sets $V_i=V_i'\cup V_i''$ so that
		$(V_i'\times V_j)\cup (V_j\times V_i'')\subseteq A\vec{\Gamma}$, or similarly with $i$ and $j$ interchanged;
		
		\item[\rm (iii)]\label{main thm-3-1}	 $V_i,V_j$ are partitioned into two nonempty sets $V_i=V_i'\cup V_i''$ and  $V_j=V_j'\cup V_j''$ so that
		$(V_i'\times V_j')\cup (V_i''\times V_j'')\subseteq E\Gamma$ and $(V_i'\times V_j'')\cup (V_j'\times V_i'')\subseteq A\vec{\Gamma}$.
	\end{enumerate}
\end{enumerate}

\begin{lemma}\label{twopath}
Let $\{R,M\}=\{A\vec{\Gamma},E\Gamma\}$. Suppose that $x\in V_i$ and $y\in V_j$ for a pair $(i,j)$ with $i\ne j$. Then
\begin{align}
\{z\mid(x,z),(y,z)\in R\}&=R(x)\setminus (V_j\cup(A\vec{\Gamma}^{\top}\cup M)(y))=R(y)\setminus (V_i\cup(A\vec{\Gamma}^{\top}\cup M)(x)).\nonumber
\end{align}
\end{lemma}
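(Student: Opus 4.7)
My plan is to reduce the statement to a short set-theoretic identity expressing $R(y)$ as the complement of two ``forbidden'' neighbourhoods inside $V\Gamma\setminus V_j$. Concretely, I would begin by rewriting the set in question as the intersection $R(x)\cap R(y)$ and then manipulate $R(y)$.

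The key observation is that, since the underlying graph of $\Gamma$ is $K_r^m$, for every $z\in V\Gamma$ exactly one of the following four mutually exclusive possibilities holds with respect to $y\in V_j$: $z\in V_j$ (so $(y,z)\notin A\Gamma$), $z\in A\vec{\Gamma}(y)$, $z\in A\vec{\Gamma}^{\top}(y)$, or $z\in E\Gamma(y)$. In other words, the four sets $V_j$, $A\vec{\Gamma}(y)$, $A\vec{\Gamma}^{\top}(y)$, $E\Gamma(y)$ partition $V\Gamma$. Since $\{R,M\}=\{A\vec{\Gamma},E\Gamma\}$, removing from $V\Gamma\setminus V_j$ the two alternatives that are not $R(y)$ gives, in either case,
\[
R(y)\;=\;(V\Gamma\setminus V_j)\setminus\bigl(A\vec{\Gamma}^{\top}(y)\cup M(y)\bigr).
\]

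Intersecting with $R(x)\subseteq V\Gamma$ then yields $R(x)\cap R(y)=R(x)\setminus\bigl(V_j\cup(A\vec{\Gamma}^{\top}\cup M)(y)\bigr)$, which is the first displayed equality. The second equality is obtained by exchanging the roles of $x$ and $y$ (and of $V_i$ and $V_j$), which is legitimate because the defining condition $(x,z),(y,z)\in R$ is visibly symmetric in $x$ and $y$.

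I do not foresee a real obstacle here; this is a routine unpacking of the definitions of $\vec{\Gamma}$ and $\overline{\Gamma}$. The only point of care is verifying that the three subcases for the relation between $y$ and a vertex $z\notin V_j$ (one-way arc out, one-way arc in, reciprocal edge) are genuinely disjoint and jointly exhaust $V\Gamma\setminus V_j$, which is immediate from the definitions $E\Gamma=\{(u,v)\mid(u,v),(v,u)\in A\Gamma\}$ and $A\vec{\Gamma}=A\Gamma\setminus E\Gamma$.
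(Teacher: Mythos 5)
Your proposal is correct and follows essentially the same route as the paper: both arguments rewrite the set as $R(x)\cap R(y)$ and then exploit the fact that every vertex outside $V_j$ is related to $y$ by exactly one of $A\vec{\Gamma}$, $A\vec{\Gamma}^{\top}$, $E\Gamma$, so that $R(y)$ is the complement of $V_j\cup(A\vec{\Gamma}^{\top}\cup M)(y)$. The paper phrases this via the inclusion $(R(x)\setminus V_j)\times\{y\}\subseteq A\vec{\Gamma}\cup A\vec{\Gamma}^{\top}\cup E\Gamma$ rather than your explicit four-set partition, but the content and the symmetry step for the second equality are identical.
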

\begin{proof}
Note that $\{z\mid(x,z),(y,z)\in R\}=R(x)\cap R(y).$
Since the underlying graph of $\Gamma$ is $K^{m}_{r}$,  one obtains $R(y)\cap V_j=R(x)\cap  V_i=\emptyset$, which implies that
\begin{align}
\{z\mid(x,z),(y,z)\in R\}&=(R(x)\setminus V_j)\cap R(y)=R(x)\cap (R(y)\setminus V_i).\nonumber
\end{align}
Since $(R(x)\setminus V_j)\times \{y\}, \{x\}\times (R(y)\setminus V_i)\subseteq A\vec{\Gamma}\cup A\vec{\Gamma}^{\top}\cup E\Gamma$, we have
\begin{align}
(R(x)\setminus V_j)\cap R(y)=R(x)\setminus (V_j\cup(A\vec{\Gamma}^{\top}\cup M)(y)),\nonumber\\
R(x)\cap (R(y)\setminus V_i)=R(y)\setminus (V_i\cup(A\vec{\Gamma}^{\top}\cup M)(x)).\nonumber
\end{align}
Thus, the desired result follows.
\end{proof}

\begin{lemma}\label{edge same}
	Let $x\in V_i$ and $y\in V_j$ for a pair $(i,j)$ with $i\ne j$. Then $e_{ij}=|E_{j}(x)|=|E_{i}(y)|=e_{ji}$.
\end{lemma}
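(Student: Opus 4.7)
The plan is to derive a single identity expressing $|E_j(x)|$ in terms of the $(x,y)$-entries of $A_0^{2}$, $A_0A_1$ and $A_0A_1^{\top}$ for an arbitrary $y\in V_j$, and then to exploit the fact that the left-hand side is independent of $y$ in order to pin the value down.

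Fix $x\in V_i$ and $y\in V_j$ with $i\ne j$. Applying Lemma~\ref{twopath} with $R=E\Gamma$ and $M=A\vec{\Gamma}$, and noting that $V_j$, $A\vec{\Gamma}^{\top}(y)$ and $A\vec{\Gamma}(y)$ are pairwise disjoint subsets of $V\Gamma\setminus V_i$, I take cardinalities to obtain
\[
(A_0^{2})(x,y)\;=\;|E\Gamma(x)\cap E\Gamma(y)|\;=\;t-|E_j(x)|-(A_0A_1)(x,y)-(A_0A_1^{\top})(x,y).
\]
By Definition~\ref{main defin}, each of $(A_0^{2})(x,y)$ and $(A_0A_1)(x,y)$ depends only on the type of the pair $(x,y)$; the same holds for $(A_0A_1^{\top})(x,y)=((A_0A_1)^{\top})(x,y)$, using that $A_0A_1=A_1A_0$ (both equal the combination assigned to $i+j=1$). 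Writing the two resulting values of the right-hand side as $\lambda_0$ and $\lambda_1$, the displayed identity becomes $|E_j(x)|=\lambda_0$ when $(x,y)\in E\Gamma$ and $|E_j(x)|=\lambda_1$ when $(x,y)\in A\vec{\Gamma}\cup A\vec{\Gamma}^{\top}$.

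Since $|E_j(x)|$ does not depend on $y$, a short case analysis establishes constancy of $|E_j(\cdot)|$ on $V_i$. If some $x\in V_i$ satisfies $0<|E_j(x)|<r$, then both types occur as $y$ ranges over $V_j$, forcing $\lambda_0=\lambda_1=|E_j(x)|$, and this common value is then shared by every $x'\in V_i$. Otherwise every $x\in V_i$ has $|E_j(x)|\in\{0,r\}$; if both extremes were attained, say at $x_0$ and $x_r$, then $\lambda_1=0$ and $\lambda_0=r$, and running the symmetric identity at any $y\in V_j$ would give $|E_i(y)|=r$ via $x_r$ but $|E_i(y)|=0$ via $x_0$, contradicting $r\ge 2$. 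Hence $e_{ij}:=|E_j(x)|$ is well-defined, an identical argument gives $e_{ji}$, and double-counting the edges of $\overline{\Gamma}$ between $V_i$ and $V_j$ yields $r\cdot e_{ij}=r\cdot e_{ji}$, whence $e_{ij}=e_{ji}$.

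I expect the main obstacle to be precisely the ``all-or-nothing'' configuration in the last step: the identity produced by Lemma~\ref{twopath} alone does not rule it out, and eliminating it requires interchanging the roles of $V_i$ and $V_j$ and re-applying the identity from the other side.
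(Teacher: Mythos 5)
Your proof is correct, and it starts from the same place as the paper's (Lemma~\ref{twopath} with $R=E\Gamma$, $M=A\vec{\Gamma}$, intersected down to the identity $|E\Gamma(x)\cap E\Gamma(y)|=t-|E_j(x)|-(A_0A_1)(x,y)-(A_0A_1^{\top})(x,y)$, which is one side of the paper's (3.1)), but it finishes differently. The paper keeps \emph{both} sides of Lemma~\ref{twopath}, so that $|E_j(x)|$ and $|E_i(y)|$ appear in a single equation whose two correction terms it then shows are equal (both $\alpha_1+\beta_1$ when $(x,y)\in A\vec{\Gamma}$, both $2\gamma_1$ when $(x,y)\in E\Gamma$); this yields $|E_j(x)|=|E_i(y)|$ for every pair at once, from which constancy on each side and $e_{ij}=e_{ji}$ are immediate. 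You instead evaluate one side completely via Definition~\ref{main defin}, obtaining $|E_j(x)|\in\{\lambda_0,\lambda_1\}$ according to the type of $(x,y)$, and then must pay for this with the case analysis: the genuinely two-sided step reappears only when you eliminate the ``all-or-nothing'' configuration by re-running the identity from $V_j$, and you still need a separate double count for $e_{ij}=e_{ji}$. Two small points you leave implicit but which do hold and should be said: the value of the right-hand side is the same for $(x,y)\in A\vec{\Gamma}$ and $(x,y)\in A\vec{\Gamma}^{\top}$ (it is $t-\alpha_0-\alpha_1-\beta_1$ in both cases, using $\alpha_0=\beta_0$), which is what makes $\lambda_1$ well defined; and the constants produced by the ``symmetric identity'' at $y\in V_j$ are literally the same $\lambda_0=t-\gamma_0-2\gamma_1$ and $\lambda_1$, which is what lets you read off the values $r$ and $0$ in the final contradiction. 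With those remarks added your argument is complete; the paper's two-sided version is simply the shorter road.
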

\begin{proof}
	Since $|E\Gamma(x)\backslash V_j|=t-|E_{j}(x)|$ and $|E\Gamma(y)\backslash V_i|=t-|E_{i}(y)|$, from Lemma \ref{twopath}, one gets
	\begin{align}
	&t-|E_{j}(x)|-|\{z\mid(x,z)\in E\Gamma,~(y,z)\in A\vec{\Gamma}^{\top}\cup A\vec{\Gamma}\}|\nonumber\\=&t-|E_{i}(y)|-|\{z\mid(x,z)\in A\vec{\Gamma}^{\top}\cup A\vec{\Gamma},(y,z)\in E\Gamma\}|.\tag{3.1}\label{3.3}
	\end{align}
	Since the underlying graph of $\Gamma$ is $K^{m}_{r}$, we may assume $(x,y)\in A\Gamma$. It follows that $(x,y)\in A\vec{\Gamma}$ or $(x,y)\in E\Gamma$.
	
	Suppose $(x,y)\in A\vec{\Gamma}$. By Definition \ref{main defin}, we have
	\begin{align}
	&|\{z\mid(x,z)\in E\Gamma,(y,z)\in A\vec{\Gamma}^{\top}\cup A\vec{\Gamma}\}|\nonumber\\
	=&|\{z\mid(x,z)\in E\Gamma,(z,y)\in A\vec{\Gamma}\}|+|\{z\mid(y,z)\in A\vec{\Gamma},(z,x)\in E\Gamma\}|\nonumber\\
	=&\alpha_1+\beta_1,\nonumber\\
	&|\{z\mid(x,z)\in A\vec{\Gamma}^{\top}\cup A\vec{\Gamma},(y,z)\in E\Gamma\}|\nonumber\\
	=&|\{z\mid(x,z)\in A\vec{\Gamma},(z,y)\in E\Gamma\}|+|\{z\mid(y,z)\in E\Gamma,(z,x)\in A\vec{\Gamma}\}|\nonumber\\
	=&\alpha_1+\beta_1.\nonumber
	\end{align}
	 In view of \eqref{3.3}, one obtains
	\begin{align}\tag{3.2}\label{3.3.0}
	t-|E_{j}(x)|-\alpha_1-\beta_1=t-|E_{i}(y)|-\alpha_1-\beta_1.
	\end{align}

	Suppose  $(x,y)\in E\Gamma$. By Definition \ref{main defin}, one gets
	\begin{align}
	&|\{z\mid(x,z)\in E\Gamma,(y,z)\in A\vec{\Gamma}^{\top}\cup A\vec{\Gamma}\}|\nonumber\\
	=&|\{z\mid(x,z)\in E\Gamma,(z,y)\in A\vec{\Gamma}\}|+|\{z\mid(y,z)\in A\vec{\Gamma},(z,x)\in E\Gamma\}|\nonumber\\
	=&2\gamma_1,\nonumber\\
	&|\{z\mid(x,z)\in A\vec{\Gamma}^{\top}\cup A\vec{\Gamma},(y,z)\in E\Gamma\}|\nonumber\\
	=&|\{z\mid(x,z)\in  A\vec{\Gamma},(z,y)\in E\Gamma\}|+|\{z\mid(y,z)\in E\Gamma,(z,x)\in  A\vec{\Gamma}\}|\nonumber\\
	=&2\gamma_1.\nonumber
	\end{align}
 In view of \eqref{3.3}, one obtains
	\begin{align}\tag{3.3}\label{3.3.1}
	t-|E_{j}(x)|-2\gamma_1=t-|E_{i}(y)|-2\gamma_1.
	\end{align}

	By \eqref{3.3.0} and \eqref{3.3.1}, one has $|E_{j}(x)|=|E_{i}(y)|$. Since $x\in V_i$ was arbitrary, $|E_{j}(x)|$ does not depend on the choice of vertex $x\in V_i$, and so $|E_j(x)|=e_{ij}$. Since $y\in V_j$ was arbitrary, $|E_{i}(y)|$ does not depend on the choice of vertex $y\in V_j$, and so $|E_i(y)|=e_{ji}$. Then $e_{ij}=e_{ji}$. Thus, the desired result follows.
\end{proof}

\begin{lemma}\label{arc types}
Let $x\in V_i$ and $y\in V_j$ for a pair $(i,j)$ with $i\ne j$. The following hold:
\begin{enumerate}
		\item\label{arc types-1} If $(x,y)\in A\vec{\Gamma}$, then $|A_{j}^{+}(x)|-|A_{i}^{+}(y)|=\beta_1+\beta_2-\alpha_1-\alpha_2$;

		\item\label{arc types-2} If $(x,y)\in E\Gamma$, then $|A_{j}^{+}(x)|=|A_{i}^{+}(y)|$.
	\end{enumerate}
\end{lemma}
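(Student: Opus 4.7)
The plan is to follow the structure of the proof of Lemma \ref{edge same}, but to replace the relation $E\Gamma$ by $A\vec{\Gamma}$ when invoking Lemma \ref{twopath}. Concretely, I would take $R=A\vec{\Gamma}$ (and correspondingly $M=E\Gamma$) in Lemma \ref{twopath}, writing the set $\{z : (x,z),(y,z)\in A\vec{\Gamma}\}$ in its two forms and passing to cardinalities. Since the $\vec{\Gamma}$-valency $|A\vec{\Gamma}(x)|=|A\vec{\Gamma}(y)|=k-t$ is the same for every vertex, these terms cancel, and what remains is an identity of the shape
\[
|A_{j}^{+}(x)| - |A_{i}^{+}(y)| = N_y - N_x,
\]
where $N_x:=|\{z : (x,z)\in A\vec{\Gamma},\ (y,z)\in A\vec{\Gamma}^{\top}\cup E\Gamma\}|$ and $N_y$ is defined analogously with $x$ and $y$ interchanged.

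Next, I would split each of $N_x$ and $N_y$ according to whether the second adjacency lies in $A\vec{\Gamma}^{\top}$ or in $E\Gamma$. Rewriting $(y,z)\in A\vec{\Gamma}^{\top}$ as $(z,y)\in A\vec{\Gamma}$ and using the symmetry of $E\Gamma$, the four pieces are precisely the $(x,y)$- and $(y,x)$-entries of the matrices $A_1^2$ and $A_1A_0$. Definition \ref{main defin} then supplies these entries in terms of $\alpha_r,\beta_r,\gamma_r$ for $r\in\{1,2\}$, with the value selected by the two-way adjacency type of the pair $(x,y)$.

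Finally, I would substitute the resulting values case by case. When $(x,y)\in A\vec{\Gamma}$, the $A_1^2$-contribution is $\alpha_2$ for $N_x$ and $\beta_2$ for $N_y$, while the $A_1A_0$-contribution is $\alpha_1$ for $N_x$ and $\beta_1$ for $N_y$; substituting into the displayed identity produces part (i). When $(x,y)\in E\Gamma$, both orderings of the pair fall into the $E\Gamma$ column of the intersection table, so $N_x=N_y=\gamma_1+\gamma_2$, which yields part (ii).

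The only real obstacle is clean bookkeeping of which matrix entry counts which family of length-$2$ walks: in particular, recognising that $(A_1A_0)_{xy}$ counts vertices $z$ with $(x,z)\in A\vec{\Gamma}$ and $(z,y)\in E\Gamma$, and that the symmetry of $E\Gamma$ is what lets one freely identify this with the condition $(y,z)\in E\Gamma$ appearing inside $N_x$. Beyond this no new idea is required: the argument is a direct parallel of Lemma \ref{edge same} with the roles of $E\Gamma$ and $A\vec{\Gamma}$ exchanged.
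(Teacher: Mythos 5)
Your proposal is correct and follows essentially the same route as the paper: both apply Lemma \ref{twopath} with $R=A\vec{\Gamma}$, cancel the common valency $k-t$ to obtain the identity $|A_j^+(x)|-|A_i^+(y)|=N_y-N_x$ (the paper's equation (3.4)), and then evaluate $N_x$ and $N_y$ by splitting over $A\vec{\Gamma}^{\top}$ and $E\Gamma$ and reading off $\alpha_1+\alpha_2$, $\beta_1+\beta_2$, or $\gamma_1+\gamma_2$ from Definition \ref{main defin}. Your phrasing via the matrix entries of $A_1^2$ and $A_1A_0$ is only a cosmetic repackaging of the paper's direct count of length-two walks.
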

\begin{proof}
	By Lemma \ref{twopath}, one has
	\begin{align}
	\{z\mid (x,z),(y,z)\in A\vec{\Gamma}\}&=A\vec{\Gamma}(x)\setminus (V_j\cup(A\vec{\Gamma}^\top\cup E\Gamma)(y))\nonumber\\&=A\vec{\Gamma}(y)\setminus (V_i\cup (A\vec{\Gamma}^\top\cup E\Gamma)(x)).\nonumber
	\end{align}
	Since $|A\vec{\Gamma}(x)\backslash V_j|=k-t-|A_{j}^{+}(x)|$ and $|A\vec{\Gamma}(y)\backslash V_i|=k-t-|A_{i}^{+}(y)|$, one gets
	\begin{align}
	&|A_{j}^{+}(x)|+|\{z\mid(x,z)\in A\vec{\Gamma},(y,z)\in A\vec{\Gamma}^\top\cup E\Gamma\}|\nonumber\\
	=&|A_{i}^{+}(y)|+|\{z\mid(x,z)\in A\vec{\Gamma}^\top\cup E\Gamma,(y,z)\in A\vec{\Gamma}\}|.\label{3.4}\tag{3.4}
	\end{align}
	
(i) By Definition \ref{main defin}, we have
\begin{align}
&|\{z\mid(x,z)\in A\vec{\Gamma},(y,z)\in A\vec{\Gamma}^\top\cup E\Gamma\}|\nonumber\\
=&|\{z\mid(x,z)\in A\vec{\Gamma},(z,y)\in E\Gamma\}|+|\{z\mid(x,z)\in A\vec{\Gamma},(z,y)\in A\vec{\Gamma}\}|\nonumber\\
=&\alpha_1+\alpha_2,\nonumber\\
&|\{z\mid(x,z)\in A\vec{\Gamma}^\top\cup E\Gamma,(y,z)\in A\vec{\Gamma}\}|\nonumber\\
=&|\{z\mid(y,z)\in A\vec{\Gamma},(z,x)\in E\Gamma\}|+|\{z\mid(y,z)\in A\vec{\Gamma},(z,x)\in A\vec{\Gamma}\}|\nonumber\\
=&\beta_1+\beta_2.\nonumber
\end{align}
 \eqref{3.4} implies $|A_{j}^{+}(x)|+\alpha_1+\alpha_2=|A_{i}^{+}(y)|+\beta_1+\beta_2$, and so (i) holds.

(ii) By Definition \ref{main defin}, we have
\begin{align}
&|\{z\mid(x,z)\in A\vec{\Gamma},(y,z)\in A\vec{\Gamma}^\top\cup E\Gamma\}|\nonumber\\
=&|\{z\mid(x,z)\in A\vec{\Gamma},(z,y)\in E\Gamma\}|+|\{z\mid(x,z)\in A\vec{\Gamma},(z,y)\in A\vec{\Gamma}\}|\nonumber\\
=&\gamma_1+\gamma_2,\nonumber\\
&|\{z\mid(x,z)\in A\vec{\Gamma}^\top\cup E\Gamma,(y,z)\in A\vec{\Gamma}\}|\nonumber\\
=&|\{z\mid(y,z)\in A\vec{\Gamma},(z,x)\in E\Gamma\}|+|\{z\mid(y,z)\in A\vec{\Gamma},(z,x)\in A\vec{\Gamma}\}|\nonumber\\
=&\gamma_1+\gamma_2.\nonumber
\end{align}
 \eqref{3.4} implies  $|A_{j}^{+}(x)|+\gamma_1+\gamma_2=|A_{i}^{+}(y)|+\gamma_1+\gamma_2$, and so $|A_{j}^{+}(x)|=|A_{i}^{+}(y)|$.
\end{proof}

\begin{lemma}\label{changyong1}
Let   $(i,j)$ be a pair with $i\ne j$.	Suppose $x_1,x_2\in V_i$. If there exists   $y\in V_j$ such that $\{(x_1,y),(x_2,y)\}$ is a subset of $E\Gamma, A\vec{\Gamma}~\text{or}~{A\vec{\Gamma}}^{\top}$, then  $|A_{j}^{+}(x_1)|=|A_{j}^{+}(x_2)|$.
\end{lemma}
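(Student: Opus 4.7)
The plan is to do a direct case analysis on which of the three relations ($E\Gamma$, $A\vec{\Gamma}$, $A\vec{\Gamma}^{\top}$) contains the pairs $(x_1,y),(x_2,y)$, and in each case reduce the desired equality $|A_j^+(x_1)|=|A_j^+(x_2)|$ to the observation that both $|A_j^+(x_1)|$ and $|A_j^+(x_2)|$ equal the same explicit function of $|A_i^+(y)|$. The whole argument is a direct application of Lemma \ref{arc types}, so there should be essentially no obstruction.

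More precisely, I would proceed as follows. If $\{(x_1,y),(x_2,y)\}\subseteq E\Gamma$, then Lemma \ref{arc types}\eqref{arc types-2} applied to each of the pairs $(x_1,y),(x_2,y)$ yields $|A_j^+(x_1)|=|A_i^+(y)|=|A_j^+(x_2)|$. If $\{(x_1,y),(x_2,y)\}\subseteq A\vec{\Gamma}$, then Lemma \ref{arc types}\eqref{arc types-1} gives
\[
|A_j^+(x_1)|-|A_i^+(y)|=\beta_1+\beta_2-\alpha_1-\alpha_2=|A_j^+(x_2)|-|A_i^+(y)|,
\]
so again $|A_j^+(x_1)|=|A_j^+(x_2)|$. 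Finally, if $\{(x_1,y),(x_2,y)\}\subseteq A\vec{\Gamma}^{\top}$, then $\{(y,x_1),(y,x_2)\}\subseteq A\vec{\Gamma}$, and Lemma \ref{arc types}\eqref{arc types-1} applied to the pairs $(y,x_1)$ and $(y,x_2)$ (with the roles of the partite sets swapped) gives
\[
|A_i^+(y)|-|A_j^+(x_1)|=\beta_1+\beta_2-\alpha_1-\alpha_2=|A_i^+(y)|-|A_j^+(x_2)|,
\]
whence $|A_j^+(x_1)|=|A_j^+(x_2)|$ once more.

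Since these three cases exhaust the hypothesis, this completes the argument. There is no genuine obstacle here; the only care needed is to remember that $A\vec{\Gamma}^{\top}$-edges from $x_s$ to $y$ should be rewritten as $A\vec{\Gamma}$-edges from $y$ to $x_s$ before invoking Lemma \ref{arc types}, which is phrased for pairs $(x,y)$ with $x\in V_i$ and $y\in V_j$ but is symmetric in $i,j$ since $i\ne j$ is arbitrary.
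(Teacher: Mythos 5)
Your proposal is correct and matches the paper's own proof essentially verbatim: the same three-case split according to whether the pairs lie in $E\Gamma$, $A\vec{\Gamma}$, or $A\vec{\Gamma}^{\top}$, with each case resolved by the same application of Lemma \ref{arc types}. No issues.
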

\begin{proof}
If $(x_1,y), (x_2,y)\in E\Gamma$, from Lemma \ref{arc types} \ref{arc types-2}, then $|A_{j}^{+}(x_1)|=|A_{i}^{+}(y)|=|A_{j}^{+}(x_2)|$, which implies $|A_{j}^{+}(x_1)|=|A_{j}^{+}(x_2)|$.	If $(x_1,y), (x_2,y)\in A\vec{\Gamma}$, from Lemma \ref{arc types} \ref{arc types-1}, then $|A_{j}^{+}(x_1)|-|A_{i}^{+}(y)|=\beta_1+\beta_2-\alpha_1-\alpha_2=|A_{j}^{+}(x_2)|-|A_{i}^{+}(y)|$, which implies $|A_{j}^{+}(x_1)|=|A_{j}^{+}(x_2)|$. If $(x_1,y), (x_2,y)\in {A\vec{\Gamma}}^{\top}$, from Lemma \ref{arc types} \ref{arc types-1}, then $|A_{i}^{+}(y)|-|A_{j}^{+}(x_1)|=\beta_1+\beta_2-\alpha_1-\alpha_2=|A_{i}^{+}(y)|-|A_{j}^{+}(x_2)|$, which implies $|A_{j}^{+}(x_1)|=|A_{j}^{+}(x_2)|$.
	 Thus, the desired result follows.
\end{proof}

\begin{lemma}\label{changyong002}
Let   $(i,j)$ be a pair with $i\ne j$.	Suppose  $x_1,x_2\in V_i$. If there exists  $y\in V_j$ such that $(x_1,y),(y,x_2)\in A\vec{\Gamma}$, then  $|A_{j}^{+}(x_1)|-|A_{j}^{+}(x_2)|=2(\beta_1+\beta_2-\alpha_1-\alpha_2).$
\end{lemma}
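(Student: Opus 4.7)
The plan is to apply Lemma \ref{arc types}\ref{arc types-1} twice, once to each of the two arcs $(x_1,y)$ and $(y,x_2)$, and then add the resulting equations.

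First, since $x_1\in V_i$, $y\in V_j$ and $(x_1,y)\in A\vec{\Gamma}$, Lemma \ref{arc types}\ref{arc types-1} applied with the pair $(i,j)$ gives
\[
|A_{j}^{+}(x_1)|-|A_{i}^{+}(y)|=\beta_1+\beta_2-\alpha_1-\alpha_2.
\]
Second, since $y\in V_j$, $x_2\in V_i$ and $(y,x_2)\in A\vec{\Gamma}$, the same lemma applied now with the pair $(j,i)$ (swapping the roles of the two partite sets) gives
\[
|A_{i}^{+}(y)|-|A_{j}^{+}(x_2)|=\beta_1+\beta_2-\alpha_1-\alpha_2.
\]
Adding these two identities, the intermediate term $|A_{i}^{+}(y)|$ cancels and we obtain exactly
\[
|A_{j}^{+}(x_1)|-|A_{j}^{+}(x_2)|=2(\beta_1+\beta_2-\alpha_1-\alpha_2),
\]
as required.

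There is essentially no obstacle here; the only thing to be careful about is making sure Lemma \ref{arc types}\ref{arc types-1} is genuinely symmetric in the roles of the two partite sets, which is clear since its statement depends only on which vertex is the tail and which is the head of the arc, not on a preferred labelling of the parts. Thus the proof reduces to two invocations of the previous lemma and a single addition.
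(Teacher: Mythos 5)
Your argument is correct and is exactly the paper's proof: apply Lemma \ref{arc types} \ref{arc types-1} to the arc $(x_1,y)$ and to the arc $(y,x_2)$, then add the two identities so that $|A_{i}^{+}(y)|$ cancels. No differences worth noting.
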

\begin{proof}
	If $(x_1,y), (y,x_2)\in A\vec{\Gamma}$, from Lemma \ref{arc types} \ref{arc types-1}, then $$|A_{j}^{+}(x_1)|-|A_{i}^{+}(y)|=\beta_1+\beta_2-\alpha_1-\alpha_2=|A_{i}^{+}(y)|-|A_{j}^{+}(x_2)|,$$
	which implies $|A_{j}^{+}(x_1)|-|A_{j}^{+}(x_2)|=2(\beta_1+\beta_2-\alpha_1-\alpha_2)$.
\end{proof}


\begin{lemma}\label{pair types}
	Let   $(i,j)$ be a pair with $i\ne j$. Then one of the following holds:
	\begin{enumerate}
		\item\label{pair types-1} $|A_{j}^{+}(x)|=c_{ij}$ for every $x\in V_i$;
		
		\item\label{pair types-3} $V_i$ is partitioned into two nonempty sets $V_i=V_i'\cup V_i''$ so that $(V_i'\times V_j)\cup (V_j\times V_i'')\subseteq A\vec{\Gamma}$;
		
		\item\label{pair types-2} $V_i,V_j$ are partitioned into two nonempty sets $V_i=V_i'\cup V_i''$ and  $V_j=V_j'\cup V_j''$ so that
			  $(V_i'\times V_j')\cup (V_i''\times V_j'')\subseteq E\Gamma$ and $(V_i'\times V_j'')\cup (V_j'\times V_i'')\subseteq A\vec{\Gamma}$, where $|V_j''|=\frac{r}{2}$.
	\end{enumerate}
\end{lemma}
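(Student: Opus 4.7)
The plan is to study the out-valency function $f(x) := |A_j^+(x)|$ on $V_i$ and show it takes at most two distinct values, from which the three cases of the lemma fall out. Set $\delta := \beta_1 + \beta_2 - \alpha_1 - \alpha_2$ and $g(y) := |A_i^+(y)|$ for $y \in V_j$. Reading Lemma \ref{arc types} in both directions, for every pair $(x,y) \in V_i \times V_j$,
\[
f(x) - g(y) = 0,\ \delta,\ \text{or}\ -\delta,
\]
according as $(x,y) \in E\Gamma$, $(x,y) \in A\vec{\Gamma}$, or $(y,x) \in A\vec{\Gamma}$. Consequently every difference $f(x_1) - f(x_2)$ with $x_1, x_2 \in V_i$ lies in $\{0, \pm \delta, \pm 2\delta\}$, and the value of this difference pins down which type-pairs $((x_1,y),(x_2,y))$ can appear above each $y$.

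If $\delta = 0$, then $f(x) = g(y)$ for every related pair, so $f$ is constant on $V_i$, yielding case \ref{pair types-1}. Henceforth assume $\delta \neq 0$. I would first rule out $|f(V_i)| = 3$: if $f(V_i) = \{a, a+\delta, a+2\delta\}$ with preimages $V_i^{(0)}, V_i^{(1)}, V_i^{(2)}$, then the admissible type-triples force, for every $y \in V_j$, that $(x_1,y) \in E\Gamma$ for each $x_1 \in V_i^{(1)}$ and $(x_2,y) \in A\vec{\Gamma}$ for each $x_2 \in V_i^{(2)}$. But then $|E_j(x_1)| = r$ while $|E_j(x_2)| = 0$, contradicting Lemma \ref{edge same} since $r \geq 2$.

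Thus $|f(V_i)| \in \{1,2\}$. The case $|f(V_i)| = 1$ is \ref{pair types-1}; otherwise write $f(V_i) = \{s_1, s_2\}$ with $s_2 > s_1$, set $V_i' := f^{-1}(s_2)$, $V_i'' := f^{-1}(s_1)$ (both nonempty), and let $d := s_2 - s_1 \in \{|\delta|, 2|\delta|\}$. When $d = 2|\delta|$, the unique admissible type-pair for $((x_1,y),(x_2,y))$ is $(A\vec{\Gamma}, A\vec{\Gamma}^{\top})$ (its reverse when $\delta < 0$, remedied by swapping $V_i'$ and $V_i''$), forcing $V_i' \times V_j \subseteq A\vec{\Gamma}$ and $V_j \times V_i'' \subseteq A\vec{\Gamma}$; this is \ref{pair types-3}. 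When $d = |\delta|$, exactly two type-pairs are admissible, so partitioning $V_j = V_j' \cup V_j''$ according to which one occurs at each $y$ yields the four containments of \ref{pair types-2} after a suitable labelling.

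The delicate step is the equality $|V_j''| = r/2$ (which simultaneously guarantees that both parts of the $V_j$-partition are nonempty). This is where Lemma \ref{edge same} re-enters essentially: the four containments force $|E_j(x)| = |V_j'|$ for $x \in V_i'$ and $|E_j(x)| = |V_j''|$ for $x \in V_i''$, so Lemma \ref{edge same} gives $|V_j'| = |V_j''|$, and since $|V_j'| + |V_j''| = r$ both equal $r/2$.
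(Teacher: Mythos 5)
Your proof is correct. The paper reaches the same trichotomy by a different decomposition: it fixes two vertices $x',x''\in V_i$ with $|A_{j}^{+}(x')|\neq|A_{j}^{+}(x'')|$, splits on whether $e_{ij}=0$ or $e_{ij}\neq 0$, and reconstructs the arc structure neighbourhood by neighbourhood using the auxiliary Lemmas \ref{changyong1} and \ref{changyong002} (two vertices of $V_i$ seeing a common vertex of $V_j$ in the same way have equal out-degree into $V_j$; two vertices joined by a directed $2$-path through $V_j$ have out-degrees differing by $2(\beta_1+\beta_2-\alpha_1-\alpha_2)$). Your argument absorbs both auxiliary facts into the single relation $f(x)-g(y)\in\{0,\pm\delta\}$ and then classifies by the size of the image of $f$ and by the gap between its two values; this is shorter and makes it transparent why exactly three outcomes can occur: the image has at most three elements, and three elements would force $|E_j(\cdot)|$ to equal $r$ on one fibre and $0$ on another, against Lemma \ref{edge same}. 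Both proofs rest on the same two inputs, Lemma \ref{arc types} and Lemma \ref{edge same}; the paper's case split on $e_{ij}$ produces the same structural conclusions (your gap $2|\delta|$ corresponds to its Case 2 with $e_{ij}=0$, your gap $|\delta|$ to its Case 1) but with considerably more bookkeeping, including an intermediate contradiction argument (its Case 2.1) that your value-distribution analysis renders unnecessary. Your closing derivation of $|V_j'|=|V_j''|=\frac{r}{2}$ from Lemma \ref{edge same} is the same device the paper uses, so nothing needed downstream (in Theorem \ref{main thm}) is lost.
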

\begin{proof}
	Assume that \ref{pair types-1} does not hold. Then $|A_{j}^{+}(x')|\ne |A_{j}^{+}(x'')|$ for some $x',x''\in V_i$.  If $\beta_1+\beta_2-\alpha_1-\alpha_2=0$, from Lemma \ref{arc types}, then $|A_{j}^{+}(x)|= |A_{i}^{+}(y)|$ for any $x\in V_i$ and $y\in V_j$, which implies $|A_{j}^{+}(x')|=|A_{j}^{+}(x'')|$, a contradiction. Thus,  $\beta_1+\beta_2-\alpha_1-\alpha_2\ne0$.
	By Lemmas \ref{edge same} and \ref{changyong1}, one has $|E_{j}(x')|=|E_{j}(x'')|=e_{ij}$ and  $E_{j}(x')\cap E_{j}(x'')=\emptyset$.
	
	\textbf{Case 1.}  $e_{ij}\ne 0$.

	Suppose that there exist two vertices $y_1,y_2\in E_{j}(x'')$ such that $(y_1,x'),(x',y_2)\in A\vec{\Gamma}$. Since $x'\in V_i$, from Lemma \ref{changyong002}, one obtains $|A_{i}^{+}(y_1)|-|A_{i}^{+}(y_2)|=2(\beta_1+\beta_2-\alpha_1-\alpha_2)$. Since $y_1,y_2\in E_{j}(x'')$, from Lemma \ref{arc types} \ref{arc types-2}, we get $|A_{i}^{+}(y_1)|=|A_{j}^{+}(x'')|=|A_{i}^{+}(y_2)|$, which implies that $\beta_1+\beta_2-\alpha_1-\alpha_2=0$, a contradiction.

Since $E_{j}(x')\cap E_{j}(x'')=\emptyset$, we may assume $E_{j}(x'')\subseteq A_{j}^{+}(x')$. Since $e_{ij}\ne 0$, we can pick a vertex $y_0\in E_{j}(x'')\subseteq A_{j}^{+}(x')$. By Lemma \ref{arc types}, one has $|A_{j}^{+}(x')|-|A_{i}^{+}(y_0)|=\beta_1+\beta_2-\alpha_1-\alpha_2$ and $|A_{j}^{+}(x'')|=|A_{i}^{+}(y_0)|$, which imply that
	\begin{align}
	|A_{j}^{+}(x')|-|A_{j}^{+}(x'')|=\beta_1+\beta_2-\alpha_1-\alpha_2.\label{key-1}\tag{3.5}
	\end{align}

	Suppose that there exists a vertex $y\in E_{j}(x')$ such that $(x'',y)\in A\vec{\Gamma}$. By Lemma \ref{arc types}, one obtains $|A_{j}^{+}(x'')|-|A_{i}^{+}(y)|=\beta_1+\beta_2-\alpha_1-\alpha_2$ and $|A_{j}^{+}(x')|=|A_{i}^{+}(y)|$, which imply $|A_{j}^{+}(x'')|-|A_{j}^{+}(x')|=\beta_1+\beta_2-\alpha_1-\alpha_2$. \eqref{key-1} implies $\beta_1+\beta_2-\alpha_1-\alpha_2=0$, a contradiction. Since $E_{j}(x')\cap E_{j}(x'')=\emptyset$, one has $E_{j}(x')\subseteq
	A_{j}^{-}(x'')$.
	
	Let $V_0=V_j\backslash (E_{j}(x')\cup E_{j}(x''))$. Since $|A_{j}^{+}(x')|\ne |A_{j}^{+}(x'')|$, from Lemma \ref{changyong1}, $V_0$ is partitioned into two sets $V_0=V_0^1\cup V_0^2$~(the sets may be empty)~so that $V_0^1\subseteq A_{j}^{+}(x')\cap A_{j}^{-}(x'')$ and $V_0^2\subseteq A_{j}^{+}(x'')\cap A_{j}^{-}(x')$.
	
	Suppose that there exists a vertex $y\in V_0^1\cup V_0^2$.
	In view of  Lemma \ref{changyong002}, one has $|A_{j}^{+}(x')|-|A_{j}^{+}(x'')|=\pm 2(\beta_1+\beta_2-\alpha_1-\alpha_2)$. \eqref{key-1} implies $\beta_1+\beta_2-\alpha_1-\alpha_2=0$, a contradiction. Thus, $V_0^1=V_0^2=\emptyset$, and so $V_j=E_{j}(x')\cup E_{j}(x'')$. Since $|E_j(x'')|=|E_j(x')|=e_{ij}$ and $E_{j}(x')\cap E_{j}(x'')=\emptyset$, we get $e_{ij}=\frac{r}{2}$. Since $E_{j}(x'')\subseteq A_{j}^{+}(x')$ and $E_{j}(x')\subseteq
	A_{j}^{-}(x'')$, one obtains $A_{j}^{+}(x')=E_{j}(x'')$ and $A_{j}^{-}(x'')=E_{j}(x')$, which imply $|A_{j}^{+}(x')|=e_{ij}=\frac{r}{2}$ and $|A_{j}^{+}(x'')|=0$. In view of \eqref{key-1}, we have $|A_{j}^{+}(x')|-|A_{j}^{+}(x'')|=\beta_1+\beta_2-\alpha_1-\alpha_2=\frac{r}{2}$.

	
	
	 Let $x\in V_i$. Suppose that there exists a vertex $y\in E_j(x')$ such that $(x,y)\in A\vec{\Gamma}$. The fact $E_j(x')=A^-_j(x'')$ implies $(y,x'')\in A\vec{\Gamma}$. Since $|A_{j}^{+}(x'')|=0$, from Lemma \ref{changyong002}, we have $|A_{j}^{+}(x)|=|A_{j}^{+}(x)|-|A_{j}^{+}(x'')|=2(\beta_1+\beta_2-\alpha_1-\alpha_2)=r$, which implies $V_j=A_{j}^{+}(x)$, and so $ E_j(x)=\emptyset$, contrary to the fact that  $|E_j(x)|=e_{ij}=\frac{r}{2}$. Since $x\in V_i$ was arbitrary, we get $E_j(x')\times  V_i\subseteq A\vec{\Gamma}\cup E\Gamma$.
	
	
	Suppose that there exists a vertex $y\in E_j(x'')$ such that $(y,x)\in A\vec{\Gamma}$.
		Since $(y,x)\in A\vec{\Gamma}$ and $(x'',y)\in  E\Gamma$, from Lemma \ref{arc types}, one has
	$\frac{r}{2}=\beta_1+\beta_2-\alpha_1-\alpha_2=|A_{i}^{+}(y)|-|A_{j}^{+}(x)|=|A_{j}^{+}(x'')|-|A_{j}^{+}(x)|=-|A_{j}^{+}(x)|$, a contradiction. Since $x\in V_i$ was arbitrary, we get $V_i\times E_j(x'')\subseteq A\vec{\Gamma}\cup E\Gamma$.
	
Let $V_i'=\{x_0\in V_i\mid A_{j}^{+}(x_0)\cap E_j(x'')\ne \emptyset\}$ and $V_i''=V_i\setminus V_i'$. Since $E_{j}(x'')= A_{j}^{+}(x')$, one has
$x'\in V_i'$ and $x''\in V_i''$, which imply that $V_i'$ and $V_i''$ are both nonempty. Let $x_0\in V_i'$. Then there exists $y\in E_j(x'')$ with $(x_0,y)\in A\vec{\Gamma}$. Since $E_{j}(x'')=A_{j}^{+}(x')$, one has $(x',y)\in A\vec{\Gamma}$. In view of Lemma \ref{changyong1}, we obtain $|A_{j}^{+}(x_0)|=|A_{j}^{+}(x')|=\frac{r}{2}$. If there exists $y_1\in E_j(x'')$ such that $(x_0,y_1)\in E\Gamma$, from Lemma \ref{arc types} \ref{arc types-2}, then $0=|A_{j}^{+}(x'')|=|A_{j}^{+}(x_0)|=\frac{r}{2}$, a contradiction. Since $V_i\times E_j(x'')\subseteq A\vec{\Gamma}\cup E\Gamma$ and $|E_j(x'')|=\frac{r}{2}$, one has $E_j(x'')=A_{j}^{+}(x_0)$. Since $V_j=E_j(x')\cup E_j(x'')$ and $e_{ij}=\frac{r}{2}$, we have $E_j(x')=E_j(x_0)$. Since $x_0\in V_i'$ was arbitrary, we obtain $V_i'\times E_j(x'')\subseteq A\vec{\Gamma}$ and $V_i'\times E_j(x')\subseteq E\Gamma$.

Since $V_i\times E_j(x'')\subseteq A\vec{\Gamma}\cup E\Gamma$, we have $V_i''\times E_j(x'')\subseteq E\Gamma$. Since $e_{ij}=\frac{r}{2}$, one gets $E_j(x_0')=E_j(x'')$ for all $x_0'\in V_i''$. Since $E_j(x')\times  V_i\subseteq A\vec{\Gamma}\cup E\Gamma$, we obtain $E_j(x')\times V_i''\subseteq A\vec{\Gamma}$. Since $|E_j(x'')|=\frac{r}{2}$, \ref{pair types-2} holds.

	\textbf{Case 2.}  $e_{ij}=0$.
	
	Since $|A_{j}^{+}(x')|\ne |A_{j}^{+}(x'')|$, from Lemma \ref{changyong1}, $V_j$ is partitioned into two sets $V_j=V_j'\cup V_j''$~(one of the sets may be empty)~so that $V_j'=A_{j}^{+}(x')\cap A_{j}^{-}(x'')$ and $V_j''=A_{j}^{+}(x'')\cap A_{j}^{-}(x')$.

\textbf{Case 2.1}  $V_j'\ne \emptyset$ and $ V_j''\ne \emptyset$.

Let $V_i'=\{x\in V_i\mid A_{j}^{+}(x)\cap V_j'\ne \emptyset\}$ and $V_i''=V_i\setminus V_i'$. Then $x'\in V_i'$ and $x''\in V_i''$. Pick a vertex $x\in V_i'$. Then there exists $y\in A_{j}^{+}(x)\cap V_j'$.
	Since $(x',y)\in  A\vec{\Gamma}$, from Lemma \ref{changyong1},  one gets $ |A_{j}^{+}(x)|=|A_{j}^{+}(x')|=| V_j'|$.
If $A_{j}^{+}(x)\cap V_j''\ne \emptyset$,  from Lemma \ref{changyong1}, then $|A_{j}^{+}(x')|=|A_{j}^{+}(x)|= |A_{j}^{+}(x'')|$ since $V_j''\subseteq A_{j}^{+}(x'')$, contrary to the fact that  $|A_{j}^{+}(x')|\ne |A_{j}^{+}(x'')|$. Thus, $A_{j}^{+}(x)\cap V_j''=\emptyset$, and so $A_{j}^{+}(x)= V_j'$. Since $e_{ij}=0$ and $x\in V_i'$ was arbitrary, we get  $(V_i'\times V_j')\cup (V_j''\times V_i')\subseteq A\vec{\Gamma}$.

Let $z\in V_i''$. Then  $A_{j}^{+}(z)\cap V_j'=\emptyset$. Suppose $A_{j}^{+}(z)\cap V_j''=\emptyset$. It follows that $V_j\subseteq A_{j}^{-}(z)$. Since $V_j'\subseteq A_{j}^{+}(x')\cap A_{j}^{-}(z)$ and $V_j''\subseteq A_{j}^{+}(x'')\cap A_{j}^{-}(z)$, from Lemma \ref{changyong002}, we get $|A_{j}^{+}(x')|-|A_{j}^{+}(z)|=2(\beta_1+\beta_2-\alpha_1-\alpha_2)$ and $|A_{j}^{+}(x'')|-|A_{j}^{+}(z)|=2(\beta_1+\beta_2-\alpha_1-\alpha_2)$, which imply that $|A_{j}^{+}(x')|=|A_{j}^{+}(x'')|$, a contradiction. Thus, $A_{j}^{+}(z)\cap V_j''\ne \emptyset$.
Since $V_j''\subseteq A_{j}^{+}(x'')$, from Lemma \ref{changyong1}, one has  $ |A_{j}^{+}(z)|=|A_{j}^{+}(x'')|=| V_j''|$. Since $A_{j}^{+}(z)\cap V_j'=\emptyset$, we get $A_{j}^{+}(z)= V_j''$. Since $e_{ij}=0$ and  $z\in V_i''$ was arbitrary, one obtains $(V_i''\times V_j'')\cup (V_j'\times V_i'')\subseteq A\vec{\Gamma}$.

Let $y'\in V_j'$ and $y''\in V_j''$. Since $(x'',y''),(y',x'')\in A\vec{\Gamma}$, from Lemma \ref{arc types} \ref{arc types-1}, one obtains
\begin{align}
|A_{j}^{+}(x'')|-|A_{i}^{+}(y'')|=\beta_1+\beta_2-\alpha_1-\alpha_2=|A_{i}^{+}(y')|-|A_{j}^{+}(x'')|.\tag{3.6}\label{lem3.7-1}
\end{align}
 Note that $x''\in V_i''$. Since $(V_i'\times V_j')\cup (V_j''\times V_i')\subseteq A\vec{\Gamma}$ and $(V_i''\times V_j'')\cup (V_j'\times V_i'')\subseteq A\vec{\Gamma}$, we have $|A_{j}^{+}(x'')|=|V_j''|$, $|A_{i}^{+}(y'')|=|V_i'|$ and $|A_{i}^{+}(y')|=|V_i''|$, which imply  $|V_j''|-|V_i'|=|V_i''|-|V_j''|$ from \eqref{lem3.7-1}. Since $|V_j''|+|V_j'|=|V_i''|+|V_i'|=r$, one gets $|V_j''|=|V_j'|=\frac{r}{2}$. Since $x'\in V_i'$, we have $|A_{j}^{+}(x'')|=|V_j''|=|V_j'|=|A_{j}^{+}(x')|$, a contradiction.

	\textbf{Case 2.2}  $V_j'=\emptyset$ or $V_j''=\emptyset$.
	
	Without loss of generality, we may assume that $V_j''=\emptyset$, and so $V_j=V_j'$. Let $V_i'=\{x\in V_i\mid A_{j}^{+}(x)\ne \emptyset\}$ and $V_i''=V_i\setminus V_i'$. Since $x'\in V_i'$ and $x''\in V_i''$, $V_i'$ and $V_i''$ are both nonempty. Let $x\in V_i$. Then there exists a vertex $y\in V_j$ such that $(x,y)\in A\vec{\Gamma}$. Since $V_j=V_j'$, we have $(x',y)\in A\vec{\Gamma}$. Lemma \ref{changyong1} implies $ |A_{j}^{+}(x)|=|A_{j}^{+}(x')|=r$. Then $A_{j}^{+}(x)=V_j$. Since $x\in V_i$ was arbitrary, we get $V_i'\times  V_j\subseteq A\vec{\Gamma}$. Since $e_{ij}=0$, one gets $V_j\times  V_i''\subseteq A\vec{\Gamma}$.  Thus, \ref{pair types-3} holds.
\end{proof}

The following theorem characterizes doubly regular team semicomplete multipartite digraphs.

\begin{thm}\label{main thm}
A doubly regular team semicomplete multipartite
digraph $\Gamma$ has Type I or  II or  III.




\end{thm}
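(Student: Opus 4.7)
The plan is to apply Lemma \ref{pair types} pair-by-pair to the partition $V\Gamma = V_1 \cup \cdots \cup V_m$ and then pin down the global constant $\lambda := \beta_1 + \beta_2 - \alpha_1 - \alpha_2$ from the resulting structural information.

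The first observation is that whenever some pair $(i,j)$ falls into case (ii) or (iii) of Lemma \ref{pair types}, the value $\lambda = r/2$ is already forced. In case (ii), the structure $(V_i' \times V_j) \cup (V_j \times V_i'') \subseteq A\vec{\Gamma}$ gives $|A_j^+(x')| = r$ for $x' \in V_i'$, $|A_j^+(x'')| = 0$ for $x'' \in V_i''$, and $|A_i^+(y)| = |V_i''|$ for every $y \in V_j$; applying Lemma \ref{arc types}(i) to the arcs $(x',y)$ and $(y,x'')$ yields $r - |V_i''| = \lambda = |V_i''| - 0$, hence $\lambda = |V_i''| = r/2$. Case (iii) is handled analogously using $|V_j''| = r/2$ together with an arc of each direction across the partition.

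I then split into two global cases. In Case A every pair satisfies case (i) of Lemma \ref{pair types}, so $|A_j^+(x)| = c_{ij}$ is constant on each $V_i$. Lemma \ref{arc types} gives $c_{ij} = c_{ji}$ from any $E\Gamma$ arc and $c_{ij} - c_{ji} = \lambda$ from any $A\vec{\Gamma}$ arc $V_i \to V_j$. If $\lambda = 0$ both constraints yield $c_{ij} = c_{ji}$, which combined with the counting identity $e_{ij} + c_{ij} + c_{ji} = r$ gives the Type II formula $c_{ij} = c_{ji} = (r - e_{ij})/2$. If $\lambda \neq 0$, each pair must be either entirely inside $E\Gamma$ (so $c_{ij} = c_{ji} = 0$, $e_{ij} = r$) or entirely one-directional in $A\vec{\Gamma}$ (so $c_{ij} = r$, $c_{ji} = 0$, forcing $\lambda = r$); recording for each pair whether the arcs go $V_i \to V_j$, $V_j \to V_i$, or form a complete $E\Gamma$-pair defines a semicomplete digraph on $\{1,\dots,m\}$ whose $r$-coclique extension is $\Gamma$, giving Type I.

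In Case B some pair is in case (ii) or (iii) of Lemma \ref{pair types}, so by the first observation $\lambda = r/2$, and those pairs are already exactly Type III sub-cases (ii) and (iii). For a remaining pair $(i,j)$ in case (i) of Lemma \ref{pair types}, I would verify $V_i \times V_j \subseteq E\Gamma$ (Type III sub-case (i)) by eliminating every other arc pattern: the coexistence of an $A\vec{\Gamma}$ arc with any $E\Gamma$ arc or with a reverse $A\vec{\Gamma}$ arc yields $\lambda = 0$ by Lemma \ref{arc types}, contradicting $\lambda = r/2$, while a pair that is fully one-directional in $A\vec{\Gamma}$ would give $\lambda = r \neq r/2$. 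The only surviving option is $e_{ij} = r$, matching Type III sub-case (i). The main delicacy is exactly this Case B classification of case-(i) pairs, where one must systematically exclude every mixed arc pattern; the rest of the argument is a direct application of Lemmas \ref{arc types} and \ref{pair types} together with the counting identity $e_{ij} + c_{ij} + c_{ji} = r$.
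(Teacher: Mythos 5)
Your proposal is correct and follows essentially the same route as the paper: both arguments reduce each pair $(i,j)$ to the trichotomy of Lemma \ref{pair types}, use Lemma \ref{arc types} together with the count $e_{ij}+c_{ij}+c_{ji}=r$ to force $\beta_1+\beta_2-\alpha_1-\alpha_2\in\{0,\frac{r}{2},r\}$, and then exploit the fact that this quantity is a global constant to classify all pairs uniformly. The only cosmetic difference is the order of the case split (you branch first on whether some pair is non-constant, the paper branches at the end on the value of $\beta_1+\beta_2-\alpha_1-\alpha_2$), which does not change the substance.
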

\begin{proof}
At first, we will compute the value of $\beta_1+\beta_2-\alpha_1-\alpha_2$ case by case. Pick a pair $(i,j)$ with $i\ne j$ such that $(V_i\times V_j)\cap (A\vec{\Gamma}\cup A\vec{\Gamma}^{\top})\ne \emptyset$.

\textbf{Case 1.} $|A_{j}^{+}(x)|=c_{ij}$ for every $x\in V_i$ and $|A_{i}^{+}(y)|=c_{ji}$ for every $y\in V_j$.

Since $(V_i\times V_j)\cap (A\vec{\Gamma}\cup A\vec{\Gamma}^{\top})\ne \emptyset$, we have $c_{ij}\neq0$ or $c_{ji}\neq0$. Since $\Gamma$ is semicomplete multipartite, one gets
	 \begin{align}
	 |V_i\times V_j|&=|A\vec{\Gamma}\cap(V_i\times V_j)|+|A\vec{\Gamma}^{\top}\cap(V_i\times V_j)|+|E\Gamma\cap (V_i\times V_j)|\nonumber\\
	 &=\sum_{x\in V_i}(|A_{j}^{+}(x)|+|E_j(x)|)+\sum_{y\in V_j}|A_i^+(y)|\nonumber,
	 \end{align}
	 which implies that $r^2=rc_{ij}+re_{ij}+rc_{ji}$ from Lemma \ref{edge same}. Then $r=c_{ij}+c_{ji}+e_{ij}$.

\textbf{Case 1.1} $c_{ij}c_{ji}=0$.
	
Since the proof of two cases are similar, we may assume that $c_{ij}=0$. Then $c_{ji}\neq0$. If there exist $x\in V_i$ and $y\in V_j$ such that $(x,y)\in E\Gamma$, from Lemma \ref{arc types} \ref{arc types-2}, then $|A_{j}^{+}(x)|=|A_{i}^{+}(y)|$, which implies that $0=c_{ij}=c_{ji}$, a contradiction. Thus, $e_{ij}=0$.
Since $r=c_{ij}+c_{ji}+e_{ij}$, we obtain $c_{ji}=r$. Then $V_j\times V_i\subseteq A\vec{\Gamma}$. Lemma \ref{arc types} \ref{arc types-1} implies $\beta_1+\beta_2-\alpha_1-\alpha_2=|A_i^+(y)|-|A_j^+(x)|=c_{ji}-c_{ij}=r$ with $y\in V_j$ and $x\in V_i$.
	
\textbf{Case 1.2} $c_{ij}c_{ji}\neq0$.
	
Pick $x\in V_i$. Since $c_{ij}c_{ji}\neq0$, there exist $y\in V_j$ and $x'\in V_i$ such that $(x,y),(y,x')\in A\vec{\Gamma}$. By Lemma \ref{arc types} \ref{arc types-1}, we get  $c_{ij}-c_{ji}=|A_j^+(x)|-|A_i^+(y)|=\beta_1+\beta_2-\alpha_1-\alpha_2$ and $c_{ji}-c_{ij}=|A_i^+(y)|-|A_j^+(x')|=\beta_1+\beta_2-\alpha_1-\alpha_2$,
which imply $\beta_1+\beta_2-\alpha_1-\alpha_2=0$ and $c_{ij}=c_{ji}$. Since $r=c_{ij}+c_{ji}+e_{ij}$, one has $c_{ij}=c_{ji}=\frac{r-e_{ij}}{2}$.

\textbf{Case 2.} $|A_{j}^{+}(x)|\ne |A_{j}^{+}(x')|$ for some $x,x'\in V_i$, or $|A_{i}^{+}(y)|\ne |A_{i}^{+}(y')|$ for some $y,y'\in V_j$.

Without loss of generality, we may assume that $|A_{j}^{+}(x)|\ne |A_{j}^{+}(x')|$ for some $x,x'\in V_i$. Note that \ref{pair types-3} or \ref{pair types-2} of Lemma \ref{pair types} is satisfied. Suppose that \ref{pair types-3}  of Lemma \ref{pair types} is satisfied. Let $y\in V_j$. Then there exist $x_0,x_1\in V_i$ such that $(x_0,y),(y,x_1)\in A\vec{\Gamma}$.  By Lemma \ref{changyong002}, we have $|A_{j}^{+}(x_0)|-|A_{j}^{+}(x_1)|=2(\beta_1+\beta_2-\alpha_1-\alpha_2).$  By Lemma \ref{pair types} \ref{pair types-3}, we have $A_{j}^+(x_0)=V_j$ and $A_{j}^+(x_1)=\emptyset$, which imply $\beta_1+\beta_2-\alpha_1-\alpha_2=\frac{r}{2}$.
Suppose that \ref{pair types-2} of Lemma \ref{pair types} is satisfied. Then there exist $x_0\in V_i'$ and $y\in V_j''$ such that $(x_0,y)\in A\vec{\Gamma}$. By Lemma \ref{arc types} \ref{arc types-1}, one has $|A_{j}^{+}(x_0)|-|A_{i}^{+}(y)|=\beta_1+\beta_2-\alpha_1-\alpha_2.$ By Lemma \ref{pair types} \ref{pair types-2}, we get $|A_{j}^{+}(x_0)|=\frac{r}{2}$ and $|A_{i}^{+}(y)|=0$, which imply $\beta_1+\beta_2-\alpha_1-\alpha_2=\frac{r}{2}$.

We conclude that $\beta_1+\beta_2-\alpha_1-\alpha_2\in\{0,\frac{r}{2},r\}$.  Note that the value of $\beta_1+\beta_2-\alpha_1-\alpha_2$ is independent of the choice of a pair $(i,j)$ with $i\ne j$ satisfying $(V_i\times V_j)\cap (A\vec{\Gamma}\cup A\vec{\Gamma}^{\top})\ne \emptyset$. If $\beta_1+\beta_2-\alpha_1-\alpha_2=r$, then $V_i\times V_j\subseteq E\Gamma$ or $(i,j)$ satisfies Case 1.1 for each pair $(i,j)$ with $i\neq j$, which implies $V_i\times V_j\subseteq E\Gamma$, $V_i\times V_j\subseteq A\vec{\Gamma}$ or $V_j\times V_i\subseteq A\vec{\Gamma}$, and so $\Gamma$ has Type I since $\Gamma$ is semicomplete multipartite. If $\beta_1+\beta_2-\alpha_1-\alpha_2=0$, then $V_i\times V_j\subseteq E\Gamma$ or $(i,j)$ satisfies Case 1.2 for each pair $(i,j)$ with $i\neq j$, which implies that $\Gamma$ has Type II. If $\beta_1+\beta_2-\alpha_1-\alpha_2=\frac{r}{2}$, then $V_i\times V_j\subseteq E\Gamma$ or $(i,j)$ satisfies Case 2 for each pair $(i,j)$ with $i\neq j$, which implies that $\Gamma$ has Type III.
\end{proof}

\begin{prop}\label{main prop doubly}
	Let $(X,\{R_0,R_1,R_2,R_3,R_4\})$ be a non-symmetric impritive $4$-class association scheme with exactly one pair of non-symmetric relations $R_1^\top=R_2$. Suppose that the digraph $(X,R_1\cup  R_3)$ is strongly connected and the graph $(X,R_4)$ is not connected. Then $(X,R_1\cup  R_3)$ is a doubly regular $(m,r)$-team semicomplete multipartite digraph of Type I or II for some $m,r\in \mathbb{N}$.
\end{prop}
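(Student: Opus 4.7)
The plan is to proceed in three stages. First, I read off the digraph-theoretic content of $\Gamma := (X, R_1\cup R_3)$: since $R_3$ is symmetric while $R_1^\top = R_2$ is the unique non-symmetric pair, we have $E\Gamma = R_3$, $A\vec{\Gamma} = R_1$, and the underlying graph of $\Gamma$ has edge set $R_1\cup R_2\cup R_3$ with non-edges precisely $R_4$.

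The first substantive step is to show $\langle R_4\rangle = \{R_0, R_4\}$, which I expect to be the main obstacle. Once this is established, $R_0\cup R_4$ is an equivalence relation whose classes $V_1, \ldots, V_m$ are $R_4$-cliques of common size $r := k_4 + 1$ and coincide with the connected components of $(X, R_4)$; by imprimitivity each block $V_i\times V_j$ with $i\ne j$ lies in a single relation from $\{R_1, R_2, R_3\}$, so the underlying graph of $\Gamma$ is $K_r^m$ and $\Gamma$ is genuinely $(m, r)$-team semicomplete multipartite. Since $(X, R_4)$ is disconnected, $\langle R_4\rangle$ is a proper closed subset; the remaining candidates are $\{R_0, R_3, R_4\}$ and $\{R_0, R_1, R_2, R_4\}$ (the latter because $R_1, R_2$ must enter together as conjugates). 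To rule these out I would pass to the factor scheme on $X/\langle R_4\rangle$, which is either a doubly regular tournament (first case) or a trivial $1$-class scheme (second case), and combine the forced block structure of $R_1$ and $R_3$ with the strong connectivity of $(X, R_1\cup R_3)$ to derive a contradiction with the axiomatic properties of the quotient.

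With the partite decomposition in hand, set $A_0 := A(R_3)$ and $A_1 := A(R_1)$. Then $A_1^\top = A(R_2)$ and $J - I - A_0 - A_1 - A_1^\top = A(R_4)$, so the scheme axiom $A(R_i)A(R_j) = \sum_l p_{i,j}^l A(R_l)$ for $i, j\in \{1, 3\}$ translates directly into the equation of Definition~\ref{main defin} with $t = k_3$ and every other coefficient a scheme intersection number. Hence $\Gamma$ is doubly regular, and Theorem~\ref{main thm} gives it Type I, II, or III.

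Finally, I would exclude Type III by invoking the imprimitivity already used in Step~1. In Type III cases (ii) and (iii), some block $V_i\times V_j$ must contain both $R_1$-pairs and $R_2$-pairs (and possibly also $R_3$-pairs), contradicting the fact that $V_i\times V_j$ lies in a single scheme relation. Type III case (i), in which $V_i\times V_j\subseteq E\Gamma = R_3$, is compatible with the Type II pair description under $e_{ij} = r$, $c_{ij} = c_{ji} = 0$, and therefore does not force a genuine Type III configuration. This leaves Type I or Type II, completing the proof.
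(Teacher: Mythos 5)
Your skeleton coincides with the paper's: establish the $(m,r)$-team multipartite structure, read Definition~\ref{main defin} off the Bose--Mesner relations with $A_0=A(R_3)$, $A_1=A(R_1)$, $t=k_3$, invoke Theorem~\ref{main thm}, and exclude Type III; your middle stage is correct and is exactly what the paper does. The serious gap is in your exclusion of Type III. You assert that imprimitivity forces each block $V_i\times V_j$ ($i\ne j$) to lie in a single relation from $\{R_1,R_2,R_3\}$. That is false: closedness of $\{R_0,R_4\}$ only guarantees that $V_i\times V_j$ avoids $R_0\cup R_4$, not that it meets only one of $R_1,R_2,R_3$. Worse, your criterion proves too much: in a Type II configuration with $c_{ij}=c_{ji}=\frac{r-e_{ij}}{2}>0$ the block $V_i\times V_j$ also contains both $R_1$- and $R_2$-pairs, so the same argument would exclude Type II, contradicting the very conclusion you are trying to prove (Type II does occur, e.g.\ for the schemes arising in Section 8). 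What actually works, and what the paper does, is a counting argument with intersection numbers: for $(x,y)\in R_1$ with $x\in V_i$, $y\in V_j$, the quantity $|\{z\in V_j\mid (x,z)\in R_1\}|=|\{z\mid (x,z)\in R_1,\ (z,y)\in R_0\cup R_4\}|=p_{1,0}^{1}+p_{1,4}^{1}$ is constant over all such arcs; in case (ii) of Type III it equals $r$ for the arc $(x',y')$ with $x'\in V_i'$ but equals $|V_i''|<r$ for the arc $(y',x'')$, a contradiction, and case (iii) is handled identically with the constant $p_{1,4}^{3}$. Some such use of the constancy of intersection numbers is indispensable here; the single-relation claim cannot replace it.

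Your Stage 1 also cannot be completed as described. You are right that the multipartite structure amounts to $\langle R_4\rangle=\{R_0,R_4\}$ and that the paper passes over this in one sentence, but the contradiction you hope to derive from the factor scheme and strong connectivity does not exist: the stated hypotheses do not force $\langle R_4\rangle=\{R_0,R_4\}$. For example, take the wreath product on $X=\mathbb{Z}_7\times\mathbb{Z}_4$ with $R_1,R_2$ given by quadratic residue/non-residue differences in the first coordinate (second coordinate arbitrary), $R_3=\{(u,v)\mid u-v=(0,2)\}$ and $R_4=\{(u,v)\mid u-v\in\{(0,1),(0,3)\}\}$. This is a non-symmetric imprimitive $4$-class scheme with exactly one non-symmetric pair, $(X,R_1\cup R_3)$ is strongly connected and $(X,R_4)$ is a disjoint union of $4$-cycles, yet $R_4^2\ni R_3$, so $\langle R_4\rangle=\{R_0,R_3,R_4\}$ and $(X,R_1\cup R_3)$ is not semicomplete multipartite. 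So the proposition needs the multipartite structure as an input rather than a consequence; in the paper's applications (Lemmas~\ref{g2zzjiegou} and~\ref{g23jiegouxj1}) this is supplied by Lemma~\ref{jichu}, since the scheme there is the attached scheme of a digraph already known to be semicomplete multipartite with $R_4=\Gamma_{2,2}$.
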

\begin{proof}
Since the graph $(X,R_4)$ is not connected and $R_4$ is symmetric, $(X,R_1\cup  R_3)$ is an $(m,r)$-team semicomplete multipartite digraph with $r=k_4+1$ and $m=\frac{|X|}{r}$.  Note that $$A_iA_j=p_{ij}^{0}I+p_{ij}^{1}A_1+p_{ij}^{2} A_2+p_{ij}^{3}A_3+p_{ij}^{4}A_4$$ for $(i,j)\in \{(1,1),(1,3),(3,3)\}$.
Since $A_4=J-I-A_1-A_2-A_3$ and $A_2=A_1^{\top}$, $(X,R_1\cup  R_3)$ is a doubly regular team semicomplete multipartite digraph.

By Theorem \ref{main thm}, it suffices to show that $(X,R_1\cup  R_3)$ does not have Type III. Assume the contrary, namely, $(X,R_1\cup R_3)$ has Type III. Let $V_1,V_2,\ldots,V_m$ be all partite sets of $(X,R_1\cup R_3)$.

Suppose that there exists a pair $(i,j)$ such that $V_i$ is partitioned into two nonempty sets $V_i=V_i'\cup V_i''$ so that $(V_i'\times V_j)\cup (V_j\times V_i'')\subseteq R_1$. For all $(x,y)\in R_1$ with $x\in V_i$ and $y\in V_j$,  since $y,z$ are in the same partite set if and only if $(y,z)\in R_0\cup R_4$, we have
\begin{align}
|\{z\in V_j\mid(x,z)\in R_1\}|= |\{z\mid (x,z)\in R_1,(z,y)\in R_0\cup R_4\}|=p_{10}^{1}+p_{14}^{1}.\label{3.10}\tag{3.7}
\end{align}
 Pick $x'\in V_i'$, $x''\in V_i''$ and $y'\in V_j$. Since $(V_i'\times V_j)\cup (V_j\times V_i'')\subseteq R_1$, we have $(x',y'), (y',x'')\in R_1$, $\{z\in V_j\mid (x',z)\in R_1\}=V_j$ and $\{z\in V_i\mid (y',z)\in R_1\}=V_i''$. Since vertices $u,v$ are in the same partite set if and only if $(u,v)\in R_0\cup R_4$, one gets $\{z\mid (x',z)\in R_1,(z,y')\in R_0\cup R_4\}=\{z\in V_j\mid (x',z)\in R_1\}=V_j$ and $\{z\mid (y',z)\in R_1,(z,x'')\in R_0\cup R_4\}=\{z\in V_i\mid (y',z)\in R_1\}=V_i''$, contrary to \eqref{3.10}.

	Suppose that there exists a pair $(i,j)$ such that $V_i,V_j$ are partitioned into two nonempty sets $V_i=V_i'\cup V_i''$ and  $V_j=V_j'\cup V_j''$ so that
		$(V_i'\times V_j')\cup (V_i''\times V_j'')\subseteq R_3$ and $(V_i'\times V_j'')\cup (V_j'\times V_i'')\subseteq R_1$. For all $(x,y)\in R_3$ with $x\in V_i$ and $y\in V_j,$ since  $y,z$ are in the same partite set if and only if $(y,z)\in R_0\cup R_4$, we have
\begin{align}\label{3.11}\tag{3.8}
|\{z\in V_j\mid (x,z)\in R_1\}|= |\{z\mid (x,z)\in R_1,(z,y)\in R_0\cup R_4\}|=p_{14}^{3}.
\end{align}
 Pick $x'\in V_i'$, $x''\in V_i''$, $y'\in V_j'$ and $y''\in V_j''$. Since $(V_i'\times V_j')\cup (V_i''\times V_j'')\subseteq R_3$, one obtains $(x',y'),(y'',x'')\in R_3$.
Since $(V_i'\times V_j'')\cup (V_j'\times V_i'')\subseteq R_1$, we get $\{z\in V_j\mid (x',z)\in R_1\}=V_j''$ and $\{z\in V_i\mid (y'',z)\in R_1\}=\emptyset$. Since vertices $u,v$ are in the same partite set if and only if $(u,v)\in R_0\cup R_4$, one gets $\{z\mid (x',z)\in R_1,(z,y')\in R_0\cup R_4\}=\{z\in V_j\mid (x',z)\in R_1\}=V_j''$ and $\{z\mid (y'',z)\in R_1,(z,x'')\in R_0\cup R_4\}=\{z\in V_i\mid (y'',z)\in R_1\}=\emptyset$, contrary to \eqref{3.11}.

By Theorem \ref{main thm}, we have $V_i\times V_j\subseteq E\Gamma$ for any pair $(i,j)$, a contradiction.

\end{proof}

\section{General results}

Let $\Gamma$ be a weakly distance-regular digraph.
For each $\wz{i}:=(a,b)\in\wz{\partial}(\Gamma)$,
we write $k_{a,b}$ instead of $k_{(a,b)}$.

In the remainder of this section, $\Gamma$ always denotes a semicomplete multipartite commutative weakly distance-regular digraph, we present some notation, concepts, and basic results for $\Gamma$ which we shall use in the remainder of the paper.

\begin{lemma}\label{jichu}
The underlying graph of $\Gamma$ is isomorphic to $K^{k}_{m}$ with $m\geq2$ and $k\geq 2$.
\end{lemma}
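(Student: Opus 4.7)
The plan is to observe that the ``same part'' relation on the underlying graph can be read off directly from the two-way distances in $\Gamma$, and then invoke the constancy of the valencies $k_{a,b}$ in the attached scheme to force all parts to have the same size. By hypothesis, $\Gamma$ is semicomplete multipartite, so its underlying graph is $K_{n_1,\ldots,n_s}$ for some $s\geq 2$ and $n_i\geq 2$; the task reduces to showing $n_1=\cdots=n_s$.

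First I would record the crucial reformulation: for distinct vertices $x,y$, we have that $x$ and $y$ lie in the same part of the underlying graph if and only if there is no edge between them in the underlying graph, that is, if and only if $(x,y)\notin A\Gamma$ and $(y,x)\notin A\Gamma$. In terms of the distance function this is equivalent to $\partial(x,y)\geq 2$ and $\partial(y,x)\geq 2$. Consequently, if $P(x)$ denotes the part of the underlying graph containing $x$, then
\[
P(x)\setminus\{x\}=\bigcup_{(a,b)\in\tilde{\partial}(\Gamma),\,a\geq 2,\,b\geq 2}\Gamma_{a,b}(x).
\]

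Next I would exploit weak distance-regularity. Since $\mathfrak{X}(\Gamma)$ is an association scheme with relations $\{\Gamma_{\tilde{i}}\}_{\tilde{i}\in\tilde{\partial}(\Gamma)}$, each $\Gamma_{a,b}$ has constant valency $k_{a,b}$, and hence the right-hand side above has cardinality
\[
\sum_{(a,b)\in\tilde{\partial}(\Gamma),\,a\geq 2,\,b\geq 2}k_{a,b},
\]
which is independent of $x$. Writing $m$ for $1$ plus this sum, every part of the underlying graph has size $m$, so the underlying graph is isomorphic to $K^{s}_{m}$; relabeling $s$ as $k$ yields the asserted form $K^{k}_{m}$, with $k\geq 2$ because the graph is multipartite and $m\geq 2$ by the definition of a semicomplete multipartite digraph.

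I do not anticipate any genuine obstacle: the whole argument rests on the elementary translation between non-adjacency in the underlying graph and the condition $\min(\partial(x,y),\partial(y,x))\geq 2$, and on the fact that scheme relations have constant valencies; no deeper properties of $\Gamma$ (commutativity, primitivity, etc.) are needed for this lemma.
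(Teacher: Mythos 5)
Your proof is correct and rests on the same idea as the paper's: the part containing $x$ is exactly $\{x\}\cup\{y:\min(\partial(x,y),\partial(y,x))\geq 2\}$, and constancy of the scheme valencies makes its size independent of $x$. The paper counts this set by inclusion--exclusion on the neighbourhoods, via $|V_i|=|V\Gamma|-|N^+(x_i)|-|N^-(x_i)|+k_{1,1}$, whereas you sum $k_{a,b}$ over $a,b\geq 2$ directly; these are trivially equivalent.
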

\begin{proof}
Let $V_1$ and $V_2$ be two distinct partite sets of the underlying graph of $\Gamma$. Pick a vertex $x_i\in V_i$ for each $i\in\{1,2\}$. It follows that $|V_i|=|V\Gamma|-|N^+(x_i)|-|N^-(x_i)|+k_{1,1}$ for $i\in \{1,2\}$. By the weakly distance-regularity of $\Gamma$, we have $|V_1|=|V_2|$. Since $V_1$ and $V_2$ were arbitrary, the desired result follows.
\end{proof}

\begin{lemma}\label{jichu0}
Let $x,y\in V\Gamma$. Then $y\notin N^+(x)\cup N^-(x)$ if and only if $x,y$ belong to the same partite set.
\end{lemma}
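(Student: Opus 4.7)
The plan is to obtain the equivalence directly from Lemma \ref{jichu} together with the defining property of a complete multipartite graph. By Lemma \ref{jichu}, the underlying graph of $\Gamma$ is isomorphic to $K^{k}_{m}$. By the very definition of the underlying graph, two vertices $x$ and $y$ are adjacent in it if and only if $(x,y)\in A\Gamma$ or $(y,x)\in A\Gamma$, i.e., if and only if $y\in N^{+}(x)\cup N^{-}(x)$. Consequently, $y\notin N^{+}(x)\cup N^{-}(x)$ if and only if $x$ and $y$ are non-adjacent in the underlying graph.

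Now in any complete multipartite graph $K^{k}_{m}$ the partite sets are precisely the maximal independent sets, so two vertices are non-adjacent if and only if they lie in a common partite set. Combining this with the previous equivalence yields the claim. There is no real obstacle here: the statement is essentially a one-line unpacking of the definitions of ``underlying graph'' and ``complete multipartite graph'' once Lemma \ref{jichu} is in hand, and I would expect the written proof to be only a few lines long.
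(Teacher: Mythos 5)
Your proof is correct and is exactly the argument the paper intends: its entire proof of this lemma is the single line ``By Lemma \ref{jichu}, the desired result holds,'' and your write-up simply makes explicit the unpacking of the definitions of underlying graph and complete multipartite graph that this citation relies on.
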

\begin{proof}
By Lemma \ref{jichu}, the desired result holds.
\end{proof}

\begin{lemma}\label{sxjl}
	Let $x,y$ be distinct vertices. Then either $\tilde{\partial}(x,y)\in \{(1,q),(q,1)\mid q+1\in T\}$ or there exists a vertex $z\in P_{\wz{i},\wz{j}}(x,y)$ with $\wz{i},\wz{j}\in \{(1,q),(q,1)\mid q+1\in T\}$.
\end{lemma}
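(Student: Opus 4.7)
The key preliminary observation is that the set $\{(1,q),(q,1)\mid q+1\in T\}$ is exactly the set of two-way distances realised by pairs joined by an arc of $\Gamma$. Indeed, by definition $q+1\in T$ iff $(1,q)\in\tilde{\partial}(\Gamma)$, and since reversing the roles of the two vertices interchanges the coordinates of $\tilde{\partial}$, we also have $(q,1)\in\tilde{\partial}(\Gamma)$. Conversely, if $\tilde{\partial}(u,v)$ has a coordinate equal to $1$, then $u$ and $v$ are joined by an arc in at least one direction, hence $v\in N^+(u)\cup N^-(u)$. Consequently, saying $\tilde{\partial}(x,z)\in\{(1,q),(q,1)\mid q+1\in T\}$ is just another way of saying that $x$ and $z$ are adjacent in the underlying graph.

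With this reformulation in hand, the lemma reduces to the following purely combinatorial claim: any two distinct vertices of $\Gamma$ are either adjacent in the underlying graph, or admit a common neighbour there. My plan is to case-split on whether $y\in N^{+}(x)\cup N^{-}(x)$.

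If $y\in N^{+}(x)\cup N^{-}(x)$, the reformulation above places $\tilde{\partial}(x,y)$ directly into $\{(1,q),(q,1)\mid q+1\in T\}$, so the first alternative of the lemma holds. Otherwise, Lemma \ref{jichu0} forces $x$ and $y$ to lie in the same partite set of the underlying graph. By Lemma \ref{jichu}, that underlying graph is $K_{m}^{k}$ with $m\geq 2$, so some partite set distinct from the one containing $x,y$ exists, and any $z$ chosen from it is adjacent to both $x$ and $y$ in the underlying graph. By the reformulation, both $\tilde{\partial}(x,z)$ and $\tilde{\partial}(z,y)$ belong to $\{(1,q),(q,1)\mid q+1\in T\}$, so $z\in P_{\wz{i},\wz{j}}(x,y)$ with $\wz{i}:=\tilde{\partial}(x,z)$ and $\wz{j}:=\tilde{\partial}(z,y)$ of the required form.

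No step presents a serious obstacle: once the set $\{(1,q),(q,1)\mid q+1\in T\}$ is recognised as parameterising adjacency in the underlying graph, the proof is a one-line consequence of Lemmas \ref{jichu} and \ref{jichu0} together with $m\geq 2$. The only mild subtlety is the symmetry argument identifying $(1,q)\in\tilde{\partial}(\Gamma)$ with $(q,1)\in\tilde{\partial}(\Gamma)$, which ensures that the set in the statement is closed under coordinate swap and hence genuinely captures all arc-type two-way distances.
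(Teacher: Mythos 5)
Your proof is correct and follows essentially the same route as the paper: split on whether $x,y$ lie in the same partite set (equivalently, whether $y\in N^+(x)\cup N^-(x)$ by Lemma \ref{jichu0}), handle the adjacent case by noting that $\{(1,q),(q,1)\mid q+1\in T\}$ is exactly the set of two-way distances of adjacent pairs, and otherwise use Lemma \ref{jichu} ($m\geq 2$ partite sets) to produce a common neighbour $z$ in a third partite set. Your write-up merely makes explicit the reformulation that the paper leaves implicit.
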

\begin{proof}
	 If $x,y$ belong to distinct partite sets,  then $\tilde{\partial}(x,y)\in \{(1,q),(q,1)\mid q+1\in T\}$ by Lemma \ref{jichu0}. If $x,y$ belong to the same partite set, from Lemma \ref{jichu}, then there exists a vertex $z\in P_{\wz{i},\wz{j}}(x,y)$ with $\wz{i},\wz{j}\in \{(1,q),(q,1)\mid q+1\in T\}$. Thus, the desired result follows.
\end{proof}

\begin{lemma}\label{pssize}
	Let $x\in V\Gamma$ and $V$ be a partite set with $x\notin V$. Then
	\begin{align}
	|V|=\sum_{q\in T}|(\Gamma_{1,q-1}(x)\cup\Gamma_{q-1,1}(x))\cap V|.\nonumber
	\end{align}
\end{lemma}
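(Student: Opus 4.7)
The plan is to show that the sets $(\Gamma_{1,q-1}(x)\cup\Gamma_{q-1,1}(x))\cap V$ for $q\in T$ partition $V$, from which the cardinality identity follows immediately.

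First I would observe that, by Lemma \ref{jichu0}, since $x\notin V$ and $V$ is a partite set, every $y\in V$ belongs to a partite set different from that of $x$, hence $y\in N^+(x)\cup N^-(x)$. In other words, either $\partial(x,y)=1$ or $\partial(y,x)=1$, so $\tilde{\partial}(x,y)$ has the form $(1,q-1)$ or $(q-1,1)$ for some integer $q\geq 1$. In either case, by definition of the set $T$, the integer $q$ lies in $T$. This yields the inclusion
\[
V\subseteq \bigcup_{q\in T}\bigl(\Gamma_{1,q-1}(x)\cup\Gamma_{q-1,1}(x)\bigr).
\]

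Next I would verify that the union on the right, intersected with $V$, is a disjoint union as $q$ ranges over $T$, so that cardinality adds up to $|V|$. This follows from axiom \ref{as-2} for the attached scheme $\mathfrak{X}(\Gamma)$: distinct two-way distances $\tilde{i}\neq\tilde{j}$ give disjoint relations $\Gamma_{\tilde{i}}$ and $\Gamma_{\tilde{j}}$. Concretely, for $q_1\neq q_2$ in $T$, none of the four labels $(1,q_1-1)$, $(q_1-1,1)$, $(1,q_2-1)$, $(q_2-1,1)$ coincide (the only way two of them could coincide, given $q_1\neq q_2$, would require both first coordinates to be $1$ and both second coordinates to be $1$, which forces $q_1=q_2=2$). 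For $q=2$ the two sets $\Gamma_{1,q-1}(x)$ and $\Gamma_{q-1,1}(x)$ are literally equal to $\Gamma_{1,1}(x)$, so their union is counted once, which is consistent with the single term indexed by $q=2$ in the sum.

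Combining these two steps gives
\[
|V|=\Bigl|\bigsqcup_{q\in T}\bigl((\Gamma_{1,q-1}(x)\cup\Gamma_{q-1,1}(x))\cap V\bigr)\Bigr|=\sum_{q\in T}|(\Gamma_{1,q-1}(x)\cup\Gamma_{q-1,1}(x))\cap V|,
\]
which is the required identity. There is no real obstacle here; the only point meriting care is the verification that the $q=2$ term is counted correctly (since $\Gamma_{1,q-1}(x)=\Gamma_{q-1,1}(x)$ in that case), which is handled by the disjointness argument above.
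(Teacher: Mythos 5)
Your proposal is correct and follows essentially the same route as the paper: the paper's proof simply notes via Lemma \ref{jichu0} that every $y\in V$ satisfies $(x,y)\in\Gamma_{1,q-1}\cup\Gamma_{q-1,1}$ for some $q\in T$, leaving the disjointness of the relations (which you verify explicitly, including the $q=2$ degeneracy) implicit. Your added care about the partition is harmless and accurate but not a different argument.
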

\begin{proof}
	Pick a vertex $y\in V$. Since $x\notin V$, from Lemma \ref{jichu0}, one has $(x,y)\in \Gamma_{1,q-1}\cup\Gamma_{q-1,1}$ for some $q\in T$. Thus, the desired result holds.
\end{proof}

The commutativity of $\Gamma$ will be used frequently in the sequel, so we no longer refer to it for the sake of simplicity.

\begin{lemma}\label{jiaojik}
	Let $x\in V\Gamma$, $\wz{i}\in \{(1,p-1),(p-1,1)\}$ and $\wz{j}\in \{(1,q-1),(q-1,1)\}$ with $p,q\in T$. Suppose $\{\wz{i},\wz{j}\}\ne \{(1,1)\}$ and $p_{\wz{i},\wz{j}}^{(h,l)}=0$ for all $h,l>1$. If $\Gamma_{\wz{i}}(x) \cap V\ne \emptyset$ for some partite set $V$, then $\Gamma_{\wz{j}^{*}}(x) \cap V=\emptyset$.
\end{lemma}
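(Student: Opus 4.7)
The strategy is a direct proof by contradiction. Assume $\Gamma_{\wz{j}^*}(x)\cap V\ne\emptyset$ and pick $y\in\Gamma_{\wz{i}}(x)\cap V$ together with $z\in\Gamma_{\wz{j}^*}(x)\cap V$. First I would argue that $y\ne z$: when $\wz{i}\ne\wz{j}^*$, the relations $R_{\wz{i}}$ and $R_{\wz{j}^*}$ are disjoint, so $(x,y)\in R_{\wz{i}}$ and $(x,z)\in R_{\wz{j}^*}$ force $y\ne z$ automatically; the degenerate case $\wz{i}=\wz{j}^*$ will be addressed at the end. Since $y,z$ are distinct vertices of the same partite set $V$, Lemma \ref{jichu0} gives $\tilde{\partial}(z,y)=(l,h)$ with $l,h\ge 2$.

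Next I would exhibit $x$ as a witness to a length-two path from $z$ to $y$ that participates in an intersection number which the hypothesis forces to vanish. From $(x,z)\in R_{\wz{j}^*}$ one gets $(z,x)\in R_{\wz{j}}$, and by choice of $y$ we have $(x,y)\in R_{\wz{i}}$; so the definition of $P_{\wz{j},\wz{i}}(z,y)=R_{\wz{j}}(z)\cap R_{\wz{i}^*}(y)$ in Section~2 yields $x\in P_{\wz{j},\wz{i}}(z,y)$, whence $p_{\wz{j},\wz{i}}^{(l,h)}\ge 1$. Commutativity of the attached scheme then gives
\[
p_{\wz{i},\wz{j}}^{(l,h)}=p_{\wz{j},\wz{i}}^{(l,h)}\ge 1,
\]
which contradicts the standing hypothesis $p_{\wz{i},\wz{j}}^{(h',l')}=0$ for all $h',l'>1$, since $l,h>1$.

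The point that requires a little extra care is the configuration $\wz{i}=\wz{j}^*$, where the two sets $\Gamma_{\wz{i}}(x)\cap V$ and $\Gamma_{\wz{j}^*}(x)\cap V$ coincide and the choice $y=z$ cannot be ruled out a priori. In this situation I would instead pick two distinct elements $y',z'$ from the common set $\Gamma_{\wz{i}}(x)\cap V$ whenever it has cardinality at least two and rerun exactly the same counting argument, concluding that $\Gamma_{\wz{i}}(x)\cap V$ cannot contain more than one vertex under the hypothesis; the side condition $\{\wz{i},\wz{j}\}\ne\{(1,1)\}$ guarantees that this bookkeeping does not collapse to a trivial case. Aside from these details, the hard part is conceptual rather than computational: recognizing that the vertex $x$ itself is the witness one needs, and that commutativity plus disjointness of distinct relations are exactly the tools that close the loop.
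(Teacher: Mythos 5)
Your proof is correct and is essentially the paper's own argument: the paper likewise takes $z\in\Gamma_{\wz{i}}(x)\cap V$ and a hypothetical $y\in\Gamma_{\wz{j}^*}(x)\cap V$, observes that $x\in P_{\wz{j},\wz{i}}(y,z)$ so the vanishing of $p_{\wz{i},\wz{j}}^{(h,l)}$ for $h,l>1$ (plus commutativity) forces $y$ and $z$ to be adjacent in the underlying graph, contradicting Lemma \ref{jichu0}. Your extra discussion of the degenerate configuration $\wz{i}=\wz{j}^*$ is no worse than the paper's one-clause dismissal of it, and in every application of the lemma that case does not arise.
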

\begin{proof}
	Let $z\in \Gamma_{\wz{i}}(x) \cap V$. If $y\in \Gamma_{\wz{j}^{*}}(x) \cap V$, from $p_{\wz{i},\wz{j}}^{(h,l)}=0$ for all $h,l>1$, then $(y,z)$ or $(z,y)\in A\Gamma$ since $x\in P_{\wz{j},\wz{i}}(y,z)$ with $\{\wz{i},\wz{j}\}\ne \{(1,1)\}$, contrary to Lemma \ref{jichu0}.
\end{proof}

\begin{lemma}\label{jichu2}
Let $y\notin N^+(x)\cup N^-(x)$ with $x,y\in V\Gamma$. 
Then $w\in N^+(x)\cup N^-(x)$ if and only if $w\in N^+(y)\cup N^-(y)$. Moreover, if $\partial(x,y)\geq3$, then $N^+(x)=N^+(y)$ and $N^-(x)=N^-(y)$.
\end{lemma}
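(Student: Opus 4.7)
The strategy is to decouple two ingredients: the complete-multipartite structure of the underlying graph from Lemma~\ref{jichu}, which identifies the combined neighbourhood $N^{+}(\cdot)\cup N^{-}(\cdot)$ with ``everything outside the partite set'', together with the distance hypothesis $\partial(x,y)\geq 3$, which forbids any directed $2$-path from $x$ to $y$ through a would-be common neighbour.

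For the first statement, Lemma~\ref{jichu0} already says that $w\in N^{+}(x)\cup N^{-}(x)$ is equivalent to $w$ being a neighbour of $x$ in the underlying graph. Because that graph is $K_{m}^{k}$, this is in turn equivalent to $w$ lying outside the partite set containing $x$. Since $x$ and $y$ share a partite set by hypothesis, this condition is symmetric in $x$ and $y$, giving the required equivalence.

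For the moreover part, I would first prove $N^{+}(x)=N^{+}(y)$. Pick $w\in N^{+}(x)$; by the first part, $w\in N^{+}(y)\cup N^{-}(y)$. If $w\in N^{-}(y)$, then $(x,w),(w,y)\in A\Gamma$ yields a directed path of length $2$ from $x$ to $y$, forcing $\partial(x,y)\leq 2$ and contradicting the hypothesis. Hence $w\in N^{+}(y)$, so $N^{+}(x)\subseteq N^{+}(y)$; equality follows from the regularity identity $|N^{+}(u)|=\sum_{q\in T}k_{1,q-1}$, which is independent of $u$ in a weakly distance-regular digraph.

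For $N^{-}(x)=N^{-}(y)$ the naive symmetric move (starting from $w\in N^{-}(x)$) only produces a path $y\to w\to x$ and would require $\partial(y,x)\geq 3$, which the hypothesis does not supply. The correct move is to flip the starting side: take $w\in N^{-}(y)$ and use the first part to place $w\in N^{+}(x)\cup N^{-}(x)$; the case $w\in N^{+}(x)$ now gives exactly the forbidden path $x\to w\to y$. Hence $w\in N^{-}(x)$, so $N^{-}(y)\subseteq N^{-}(x)$, and again equality follows from $|N^{-}(u)|=\sum_{q\in T}k_{q-1,1}$. The only real subtlety is this asymmetric choice of starting side to match the one-sided distance hypothesis; once that is in place the argument is routine.
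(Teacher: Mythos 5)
Your proposal is correct and follows essentially the same route as the paper: the first part via the partite-set characterization from Lemmas \ref{jichu} and \ref{jichu0}, and the "moreover" part by taking $u\in N^{+}(x)$ and $v\in N^{-}(y)$, ruling out the wrong side using $\partial(x,y)\geq 3$, and concluding equality from the constant in/out-degrees. The asymmetric choice of starting side that you highlight is exactly what the paper does as well.
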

\begin{proof}
By Lemma \ref{jichu0}, $x,y$ belong to the same partite set. Since $\Gamma$ is semicomplete multipartite, $w\in N^+(x)\cup N^-(x)$ if and only if $x,w$ belong to the distinct partite sets, if and only if $y,w$ belong to the distinct partite sets. Since $y,w$ belong to the distinct partite sets if and only if $w\in N^+(y)\cup N^-(y)$, the first statement is valid.

Now suppose $\partial(x,y)\geq 3$. Pick vertices  $u\in  N^{+}(x)$ and $v\in N^{-}(y)$. By the first statement, we have $u\in  N^{+}(y)\cup N^{-}(y)$ and $v\in N^{+}(x)\cup N^{-}(x)$. The fact $3\leq \partial(x,y)\leq\min\{1+\partial(u,y),1+\partial(x,v)\}$ implies $u\in N^{+}(y)$ and $v\in N^{-}(x)$. Since $u\in  N^{+}(x)$ and $v\in N^{-}(y)$ were arbitrary, one has $N^{+}(x)\subseteq N^{+}(y)$ and $N^{-}(y)\subseteq N^{-}(x)$. The fact $|N^{+}(y)|=|N^{+}(x)|=|N^{-}(x)|=|N^{-}(y)|$ implies $N^+(x)=N^+(y)$ and $N^-(x)=N^-(y)$. Thus, the second statement is also valid.
\end{proof}

\begin{lemma}\label{changyong}
If $(2,s)\in\tilde{\partial}(\Gamma)$, then $s\in\{1,2\}$.
\end{lemma}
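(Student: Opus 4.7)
The plan is to argue by contradiction: assume $(2,s)\in\tilde{\partial}(\Gamma)$ with $s\geq 3$, and pick $x,y\in V\Gamma$ realizing $\partial(x,y)=2$ and $\partial(y,x)=s\geq 3$. First I would observe that $\partial(x,y)=2$ forces $(x,y)\notin A\Gamma$, i.e.\ $y\notin N^+(x)$, and $\partial(y,x)\geq 3$ forces $(y,x)\notin A\Gamma$, i.e.\ $y\notin N^-(x)$. So $y\notin N^+(x)\cup N^-(x)$, and Lemma \ref{jichu0} places $x$ and $y$ in the same partite set.

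Next I would invoke Lemma \ref{jichu2} with the roles of $x$ and $y$ swapped: since $x\notin N^+(y)\cup N^-(y)$ and $\partial(y,x)\geq 3$, we obtain $N^+(x)=N^+(y)$ and $N^-(x)=N^-(y)$.

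Finally I would use the length-$2$ path witnessing $\partial(x,y)=2$: take $z$ with $(x,z),(z,y)\in A\Gamma$. Then $z\in N^+(x)=N^+(y)$ gives $(y,z)\in A\Gamma$, and $z\in N^-(y)=N^-(x)$ gives $(z,x)\in A\Gamma$. Hence $(y,z,x)$ is a path of length $2$, contradicting $\partial(y,x)\geq 3$. Therefore $s\in\{1,2\}$.

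I do not expect any real obstacle; the whole argument reduces to combining the two-step path $x\to z\to y$ with the neighborhood-equality conclusion of Lemma \ref{jichu2}. The only thing to watch is the careful bookkeeping of which direction of Lemma \ref{jichu2} is being applied, since it is $\partial(y,x)\geq 3$ (not $\partial(x,y)\geq 3$) that triggers the $N^\pm$ equalities.
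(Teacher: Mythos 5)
Your proof is correct and follows essentially the same route as the paper's: both take a length-$2$ path from $x$ to $y$, note $y\notin N^+(x)\cup N^-(x)$, apply the second statement of Lemma \ref{jichu2} (triggered by $\partial(y,x)\geq 3$) to get $N^\pm(x)=N^\pm(y)$, and reverse the path to contradict $\partial(y,x)\geq 3$. The only cosmetic difference is that the paper names the intermediate vertex via $P_{(1,q),(1,p)}(x,z)$ rather than directly from a shortest path.
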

\begin{proof}
Assume the contrary, namely, $s\geq 3$. Since $(2,s)\in \tilde{\partial}(\Gamma)$, we have  $\Gamma_{2,s}\in \Gamma_{1,q}\Gamma_{1,p}$ for some $q+1,p+1\in T$. Let $(x,z)\in\Gamma_{2,s}$ and $y\in P_{(1,q),(1,p)}(x,z)$.  Then $z\notin N^{+}(x)\cup N^{-}(x)$. Since $\partial(z,x)\geq3$, from Lemma \ref{jichu2}, one gets $N^{-}(x)=N^{-}(z)$ and $N^{+}(z)=N^{+}(x)$. Since $y\in N^-(z)$ and $y\in N^+(x)$, we have $y\in N^{-}(x)$ and $y\in N^{+}(z)$. It follows that $(x,y),(y,z)\in\Gamma_{1,1}$, contrary to the fact that $\partial(z,x)\geq 3$.
\end{proof}

\begin{lemma}\label{mixed arcs}
If $q\in T$, then $q\leq 4$.
\end{lemma}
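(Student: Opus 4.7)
The plan is to argue by contradiction. Suppose $q \in T$ with $q \geq 5$, equivalently $(1, r) \in \tilde\partial(\Gamma)$ for some $r \geq 4$; fix $(x, y) \in \Gamma_{1, r}$ together with a shortest path $y = w_0, w_1, \ldots, w_r = x$ from $y$ back to $x$. The strategy is to exploit Lemma \ref{changyong} repeatedly to force a cascade of ``backward'' arcs, reduce to the case $r = 4$, and then find a forbidden $(2, s)$-arc with $s \geq 3$.

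The first step extracts two shortcut arcs. Since $(w_{r-1}, x), (x, y) \in A\Gamma$, we have $\partial(w_{r-1}, y) \leq 2$; combined with $\partial(y, w_{r-1}) = r - 1 \geq 3$, Lemma \ref{changyong} forbids $\partial(w_{r-1}, y) = 2$, forcing $(w_{r-1}, y) \in A\Gamma$. A symmetric argument using $(x, y), (y, w_1)$ yields $(x, w_1) \in A\Gamma$. I then iterate: assuming $(w_{i+1}, y) \in A\Gamma$, the arcs $(w_i, w_{i+1})$ and $(w_{i+1}, y)$ give $\partial(w_i, y) \leq 2$, and for $i \geq 3$ Lemma \ref{changyong} again forces $(w_i, y) \in A\Gamma$. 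This establishes $(w_i, y) \in A\Gamma$ for every $3 \leq i \leq r - 1$, and dually $(x, w_j) \in A\Gamma$ for $1 \leq j \leq r - 3$. In particular, $(w_4, y) \in \Gamma_{1, 4}$ whenever $r \geq 5$, so after relabelling it suffices to derive a contradiction in the case $r = 4$.

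For $r = 4$ we have $(x, w_1), (w_3, y) \in A\Gamma$ in addition to the path arcs. The contradiction will come from analysing the pairs $\tilde\partial(w_1, w_3)$, $\tilde\partial(y, w_2)$, $\tilde\partial(w_2, x)$, and $\tilde\partial(x, w_3)$; each of these two-way distances has one coordinate equal to $2$ and, by Lemma \ref{changyong}, the other lying in $\{1, 2\}$, so by Lemma \ref{jichu0} each such pair is either joined by an arc in one direction or lies in a common partite set. In every ``same partite set'' sub-case, Lemma \ref{jichu2} propagates a known arc through the equation $N^+(u) \cup N^-(u) = N^+(v) \cup N^-(v)$ (for example, $y \in N^+(x)$ together with $x, w_2$ in one partite set forces $(w_2, y) \in A\Gamma$, while $y, w_2$ in one partite set with $x \in N^-(y)$ forces $(x, w_2) \in A\Gamma$). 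A careful enumeration of the resulting sub-cases will yield a pair at two-way distance $(2, s)$ with $s \geq 3$, contradicting Lemma \ref{changyong}.

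The main obstacle is precisely this last case analysis: the derived arcs in the $r = 4$ configuration are few, the induction stalls there, and the partite-set constraints interact delicately with Lemmas \ref{changyong}, \ref{jichu0}, \ref{jichu2}, and possibly \ref{sxjl}. I expect the bulk of the technical work to lie in closing every sub-case cleanly.
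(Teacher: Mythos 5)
Your reduction to the case $r=4$ is correct: the repeated use of Lemma \ref{changyong} to force the shortcut arcs $(w_{r-1},y)$, $(x,w_1)$, and more generally $(w_i,y)$ for $3\leq i\leq r-1$ is sound, and it does show that $(1,r)\in\tilde{\partial}(\Gamma)$ with $r\geq 5$ would produce a pair in $\Gamma_{1,4}$. But the proof stops exactly where the real work begins, and the plan you sketch for the $r=4$ case does not close. The pairs you propose to analyse --- $(w_1,w_3)$, $(y,w_2)$, $(w_2,x)$, $(x,w_3)$ --- all have their ``return distance'' bounded above by $2$ or $3$ already from the arcs in hand (e.g.\ $\partial(w_2,y)\leq\partial(w_2,w_3)+\partial(w_3,y)=2$ and $\partial(w_3,w_1)\leq\partial(w_3,y)+\partial(y,w_1)=2$), so none of them can ever be forced into a forbidden class $\Gamma_{2,s}$ with $s\geq3$. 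Indeed, one can assign arcs among the five vertices $y,w_1,w_2,w_3,x$ (for instance $w_2\to y$, $x\to w_2$, $w_3\to w_1$ in addition to the arcs you derived) so that every constraint coming from Lemmas \ref{changyong}, \ref{jichu0} and \ref{jichu2} is satisfied and no path of length $\leq 3$ from $y$ to $x$ appears inside the configuration. The contradiction is not visible in this local picture; it requires a counting argument.

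The missing idea --- which is the whole of the paper's proof --- is to compare out-neighbourhoods. Suppose $(x,y)\in\Gamma_{1,q-1}$ with $q-1\geq 4$ (your $r=4$ case suffices, but the argument needs no reduction). For any $z\in N^+(y)$ one has $\partial(x,z)\leq\partial(x,y)+1=2$; if $\partial(x,z)=2$ then Lemma \ref{changyong} gives $\partial(z,x)\leq2$ and hence $\partial(y,x)\leq1+\partial(z,x)\leq3$, contradicting $\partial(y,x)\geq4$. So every $z\in N^+(y)$ satisfies $\partial(x,z)=1$, i.e.\ $N^+(y)\cup\{y\}\subseteq N^+(x)$, which contradicts $|N^+(x)|=|N^+(y)|$ (weak distance-regularity). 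Your local case analysis cannot substitute for this valency comparison; without it the proof is incomplete.
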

\begin{proof}
 Let $(x,y)\in\Gamma_{1,q-1}$. If $\partial(x,z)=1$ for all $z\in N^+(y)$, then $N^+(y)\cup\{y\}\subseteq N^+(x)$, contrary to the fact that $\Gamma$ is weakly distance-regular. Then there exists $z\in N^+(y)$ such that $\partial(x,z)=2$. By Lemma \ref{changyong}, we have $\partial(z,x)=1$ or $2$. Since $q-1=\partial(y,x)\leq \partial(y,z)+\partial(z,x)\leq 3$, one has $q\leq 4$.
\end{proof}

\begin{lemma}\label{zuidaarc}
	If $2\in  T$, then $(s,t)\notin \tilde{\partial}(\Gamma)$ for  all $s,t\geq 3$.
\end{lemma}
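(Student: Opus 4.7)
The plan is to argue by contradiction using Lemma \ref{jichu2} together with the observation that $2\in T$ forces $N^{+}(x)\cap N^{-}(x)\neq \emptyset$ for every vertex $x$.

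Suppose, for contradiction, that $(s,t)\in \tilde{\partial}(\Gamma)$ with $s,t\geq 3$, and pick $(x,y)\in \Gamma_{s,t}$. Since $s,t\geq 3>1$, we have $y\notin N^{+}(x)\cup N^{-}(x)$, so by Lemma \ref{jichu0} the vertices $x$ and $y$ lie in the same partite set. Moreover, because $\partial(x,y)=s\geq 3$, the second part of Lemma \ref{jichu2} applies and yields $N^{-}(x)=N^{-}(y)$ (and $N^{+}(x)=N^{+}(y)$, though only the former is needed).

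Next I exploit the hypothesis $2\in T$. By definition of $T$, we have $(1,1)\in \tilde{\partial}(\Gamma)$, so the relation $\Gamma_{1,1}$ is nonempty and hence $k_{1,1}\geq 1$. Since $k_{1,1}=|\Gamma_{1,1}(x)|$, there exists a vertex $w$ with $(x,w),(w,x)\in A\Gamma$; in particular $w\in N^{-}(x)$. Combining this with $N^{-}(x)=N^{-}(y)$ shows $(w,y)\in A\Gamma$. Therefore $(x,w,y)$ is a path of length $2$ from $x$ to $y$, so $\partial(x,y)\leq 2$, contradicting $s\geq 3$.

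The proof is essentially a two-line synthesis of Lemmas \ref{jichu0} and \ref{jichu2} with the simple remark $k_{1,1}\geq 1$, so there is no real obstacle; the only thing to verify carefully is that one is entitled to apply Lemma \ref{jichu2} (its hypothesis $y\notin N^{+}(x)\cup N^{-}(x)$ is automatic from $s,t\geq 3$, and the ``moreover'' clause needs $\partial(x,y)\geq 3$, which we have by assumption).
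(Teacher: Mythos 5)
Your proof is correct and follows essentially the same route as the paper's: both pick a vertex $w\in\Gamma_{1,1}(x)$ (nonempty because $2\in T$ forces $k_{1,1}\geq 1$) and use Lemma \ref{jichu2} to transfer adjacency with $w$ from $x$ to $y$, producing a path of length $2$ that contradicts $s,t\geq 3$. The only cosmetic difference is that you invoke the stronger ``moreover'' clause of Lemma \ref{jichu2} (legitimately, since $\partial(x,y)=s\geq 3$), whereas the paper gets by with only the first statement of that lemma.
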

\begin{proof}
	Suppose for the contrary that $(s,t)\in \tilde{\partial}(\Gamma)$ for  some $s,t\geq 3$. Let $(x,z)\in\Gamma_{s,t}$. Then $z\notin N^{+}(x)\cup N^{-}(x)$. Pick a vertex $w\in\Gamma_{1,1}(x)$. By Lemma \ref{jichu2}, we obtain $(z,w)\in A\Gamma$ or $(w,z)\in A\Gamma$, which implies that $\partial(z,x)=t\leq2$ or $\partial(x,z)=s\leq2$, a contradiction.
\end{proof}


\begin{lemma}\label{jichu3}
If $i,j,s,t\geq 2$, then $\Gamma_{1,q}\notin \Gamma_{i,j}\Gamma_{s,t}$ for all $q+1\in T$.
\end{lemma}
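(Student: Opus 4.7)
The plan is to derive a contradiction directly from Lemma~\ref{jichu0}, using the fact that "being in the same partite set" is an equivalence relation on $V\Gamma$.

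First I would unpack the hypothesis. Assuming $\Gamma_{1,q}\in \Gamma_{i,j}\Gamma_{s,t}$ with $q+1\in T$, the intersection number $p_{(i,j),(s,t)}^{(1,q)}$ is nonzero, so there exist vertices $x,y,z$ with $(x,y)\in\Gamma_{1,q}$, $(x,z)\in \Gamma_{i,j}$, and $(z,y)\in \Gamma_{s,t}$. Since $i,j\geq 2$, we have $\partial(x,z)\geq 2$ and $\partial(z,x)\geq 2$, so $z\notin N^{+}(x)\cup N^{-}(x)$. Lemma~\ref{jichu0} then forces $x$ and $z$ to lie in a common partite set.

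Next, the symmetric argument applied to $z,y$ (using $s,t\geq 2$) gives $y\notin N^{+}(z)\cup N^{-}(z)$, and Lemma~\ref{jichu0} again puts $y$ and $z$ in a common partite set. Combining the two, $x$ and $y$ belong to the same partite set, so one more appeal to Lemma~\ref{jichu0} yields $y\notin N^{+}(x)\cup N^{-}(x)$. This contradicts $(x,y)\in\Gamma_{1,q}$, which asserts $\partial(x,y)=1$, i.e., $y\in N^{+}(x)$.

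There is no serious obstacle here: the statement is essentially a parity-style observation that the partite-set equivalence relation propagates along pairs whose two-way distance has both coordinates at least $2$, while an arc of type $(1,q)$ must cross between distinct partite sets. The only care needed is to correctly interpret the intersection number $p_{(i,j),(s,t)}^{(1,q)}\neq 0$ via the definition of $P_{\wz{i},\wz{j}}(x,y)$ from Section~2, so that the witness vertex $z$ satisfies $(x,z)\in\Gamma_{i,j}$ and $(z,y)\in\Gamma_{s,t}$ as claimed.
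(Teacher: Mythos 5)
Your proof is correct and is essentially the same argument as the paper's: both reduce to the fact that non-adjacency coincides with the partite-set equivalence relation, so a pair with both coordinates of the two-way distance at least $2$ stays inside one partite set, while an arc of type $(1,q)$ must leave it. The paper packages the transitivity step via Lemma \ref{jichu2} instead of invoking Lemma \ref{jichu0} three times, but this is only a cosmetic difference.
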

\begin{proof}
Assume the contrary, namely, $\Gamma_{1,q}\in \Gamma_{i,j}\Gamma_{s,t}$  for some $q+1\in T$. Let $(x,z)\in \Gamma_{1,q}$ and $y\in P_{(i,j),(s,t)}(x,z)$.  Since $y\notin N^{+}(x)\cup N^{-}(x)$ and $z\in N^{+}(x)\cup N^{-}(x)$, from Lemma \ref{jichu2}, we obtain $z\in N^{+}(y)\cup N^{-}(y)$, contrary to the fact that $(z,y)\in\Gamma_{s,t}$ with $s,t\geq 2$.
\end{proof}

\begin{lemma}\label{tf22}
	Let $q\geq 1$. If $\Gamma_{2,2}\in \Gamma_{1,1}\Gamma_{1,q}$, then $\Gamma_{2,2}\in \Gamma_{1,q}^2$.
\end{lemma}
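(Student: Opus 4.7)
\emph{Plan.} The case $q = 1$ is immediate, since $\Gamma_{1,q}=\Gamma_{1,1}$ makes the hypothesis and the conclusion identical, so assume $q\geq 2$. Fix $(x,z)\in\Gamma_{2,2}$ together with $y\in P_{(1,1),(1,q)}(x,z)$, so that $(x,y)\in\Gamma_{1,1}$ and $(y,z)\in\Gamma_{1,q}$; the goal is to produce a vertex $w$ with $(x,w),(w,z)\in\Gamma_{1,q}$.

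By Lemma~\ref{jichu0} and Lemma~\ref{jichu2}, $x$ and $z$ lie in a common partite set, call it $V_0$, and every vertex outside $V_0$ is adjacent in the underlying graph to both $x$ and $z$. In particular $y\notin V_0$; combining $\tilde{\partial}(x,y)=(1,1)$ with $\tilde{\partial}(y,z)=(1,q)$ and $q\geq 2$ forces $y\in N^+(x)\cap N^-(x)$ and $y\in N^-(z)\setminus N^+(z)$. A prospective $w$ must likewise lie outside $V_0$ but with the strictly more asymmetric type $w\in (N^+(x)\setminus N^-(x))\cap(N^-(z)\setminus N^+(z))$, together with $\partial(w,x)=\partial(z,w)=q$. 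Using commutativity I would first extract the mirror witness $y'\in P_{(1,q),(1,1)}(x,z)$ from the identity $p^{(2,2)}_{(1,1),(1,q)}=p^{(2,2)}_{(1,q),(1,1)}$; it satisfies $y'\in(N^+(x)\setminus N^-(x))\cap(N^+(z)\cap N^-(z))$, so $y$ and $y'$ each realize exactly one of the two asymmetries demanded of $w$.

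To actually manufacture $w$, I would translate the target into the equivalent intersection-number statement $p^{(1,q)}_{(2,2),(q,1)}>0$ via Lemma~\ref{jb}(ii), take $(u,v)=(y,z)\in\Gamma_{1,q}$, and search for $r$ in the partite set containing $y$ (and distinct from $y$) with $\tilde{\partial}(y,r)=(2,2)$ and $\tilde{\partial}(r,z)=(q,1)$: by Lemma~\ref{jichu0} such an $r$ automatically has both coordinates of $\tilde{\partial}(y,r)$ at least $2$, and Lemmas~\ref{changyong} and~\ref{zuidaarc} constrain the admissible two-way distances enough to force $\tilde{\partial}(y,r)=(2,2)$. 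The step I expect to be the hardest is precisely this existence verification: the vertices already on the table ($x$, $y$, $y'$) each fail at least one of the two distance constraints on $w$, so $r$ must be produced by a genuine counting argument. My fall-back plan is to apply Lemma~\ref{jb}(iv) to the triple $\bigl((1,1),(1,q),(1,q)\bigr)$ and compare the two expansions of $A_{(1,1)}A_{(1,q)}A_{(1,q)}$ in the Bose--Mesner algebra, using commutativity to transport the known nonvanishing at $\Gamma_{2,2}$ from the $A_{(1,1)}A_{(1,q)}$ factor onto the $A_{(1,q)}^2$ factor, thereby securing $p^{(2,2)}_{(1,q),(1,q)}>0$.
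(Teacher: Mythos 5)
Your reductions up to the final step are all sound: the dispatch of $q=1$, the fact that $x$ and $z$ share a partite set, the placement of $y$ and of the mirror witness $y'$, and the equivalence of the goal with $p^{(1,q)}_{(2,2),(q,1)}>0$ via Lemma~\ref{jb}~\ref{jb-2}. But the argument stops exactly where it has to do real work: the existence of the midpoint $w$ (equivalently of a vertex $r\in P_{(2,2),(q,1)}(y,z)$) is never established, and you say so yourself. Neither proposed route closes this gap. The ``genuine counting argument'' is not supplied, and the fall-back via Lemma~\ref{jb}~\ref{jb-4} cannot work as described: the associativity identity $\sum_{r}p_{i,l}^{r}p_{m,r}^{j}=\sum_{t}p_{m,i}^{t}p_{t,l}^{j}$ mixes contributions from all intermediate relations, so the nonvanishing of $p^{(2,2)}_{(1,1),(1,q)}$ does not by itself force the single summand $p^{(2,2)}_{(1,q),(1,q)}$ to be nonzero; some geometric input about which products vanish is indispensable.

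The paper sidesteps the difficulty by not insisting on the original pair $(x,z)$: to get $\Gamma_{2,2}\in\Gamma_{1,q}^{2}$ one only needs \emph{some} pair in $\Gamma_{2,2}$ joined by two arcs of type $(1,q)$, and it is much easier to manufacture such a pair around a fixed $(1,1)$-arc than to find a midpoint for a prescribed pair. Concretely, fix $(x,y)\in\Gamma_{1,1}$. From $p^{(2,2)}_{(1,1),(1,q)}\neq 0$, Lemma~\ref{jb}~\ref{jb-2} together with commutativity gives $p^{(1,1)}_{(2,2),(q,1)}=p^{(1,1)}_{(2,2),(1,q)}\neq 0$, so there exist $z_1\in P_{(2,2),(q,1)}(x,y)$ and $z_2\in P_{(2,2),(1,q)}(x,y)$; they are distinct because $\tilde{\partial}(y,z_1)=(1,q)\neq(q,1)=\tilde{\partial}(y,z_2)$ when $q>1$. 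Both lie in $\Gamma_{2,2}(x)$, hence in the partite set of $x$ by Lemma~\ref{jichu0}, and $y\in P_{(1,q),(1,q)}(z_2,z_1)$; since $\partial(z_2,z_1)=2$ and Lemma~\ref{changyong} then forces $\partial(z_1,z_2)=2$, the pair $(z_2,z_1)$ lies in $\Gamma_{2,2}$ and witnesses the claim. If you want to salvage your own write-up, replace the search for $w$ by this change of base point.
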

\begin{proof}
	If $q=1$, then the desired result follows. Now consider the case that $q>1$. Let $(x,y)\in \Gamma_{1,1}$. By Lemma \ref{jichu0},  $x,y$ belong to two distinct partite sets. Since $\Gamma_{2,2}\in \Gamma_{1,1}\Gamma_{1,q}$,  from Lemma \ref{jb} \ref{jb-2}, one has  $p_{(2,2),(q,1)}^{(1,1)}=p_{(2,2),(1,q)}^{(1,1)}\ne 0$, which implies that there exist  distinct vertices $z_1,z_2$ such that $z_1\in P_{(2,2),(q,1)}(x,y)$ and $z_2\in P_{(2,2),(1,q)}(x,y)$. Since $z_1,z_2\in \Gamma_{2,2}(x)$, from Lemma \ref{jichu0},  $x,z_1,z_2$ belong to the same partite set. Since $y\in P_{(1,q),(1,q)}(z_2,z_1)$, by   Lemma \ref{changyong}, one gets $(z_1,z_2)\in \Gamma_{2,2}$, and so  $\Gamma_{2,2}\in \Gamma_{1,q}^2$.
\end{proof}

\begin{lemma}\label{one}
	If $q\in T$, then $\Gamma_{1,q-1}^2\ne \{\Gamma_{1,q-1}\}$.
\end{lemma}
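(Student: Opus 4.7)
The plan is to argue by contradiction, assuming $\Gamma_{1,q-1}^2=\{\Gamma_{1,q-1}\}$, which means that whenever $(x,y),(y,z)\in\Gamma_{1,q-1}$ one has $(x,z)\in\Gamma_{1,q-1}$ (i.e.\ the relation $\Gamma_{1,q-1}$ is transitive on triples).

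First, since $q\in T$, we have $(1,q-1)\in\wz{\partial}(\Gamma)$, so $\Gamma_{1,q-1}$ is nonempty and $k_{1,q-1}\ge 1$. Pick $(x,y)\in\Gamma_{1,q-1}$ and any $z\in\Gamma_{1,q-1}(y)$ (which exists since $k_{1,q-1}\ge 1$). The transitivity assumption forces $(x,z)\in\Gamma_{1,q-1}$, i.e.\ $z\in\Gamma_{1,q-1}(x)$. Since $z$ was arbitrary, this gives $\Gamma_{1,q-1}(y)\subseteq\Gamma_{1,q-1}(x)$, and because both sets have the same cardinality $k_{1,q-1}$, they coincide: $\Gamma_{1,q-1}(y)=\Gamma_{1,q-1}(x)$.

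Now apply this with $y$ itself: since $y\in\Gamma_{1,q-1}(x)=\Gamma_{1,q-1}(y)$, we would have $(y,y)\in\Gamma_{1,q-1}$, contradicting $(y,y)\in\Gamma_{0,0}$ together with the disjointness of the relations of the association scheme $\mathfrak{X}(\Gamma)$.

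There is essentially no obstacle here: once one notices that transitivity of $\Gamma_{1,q-1}$ plus constancy of the out-valency forces $\Gamma_{1,q-1}(y)=\Gamma_{1,q-1}(x)$ for every $(x,y)\in\Gamma_{1,q-1}$, the fact that $y$ would then lie in its own $\Gamma_{1,q-1}$-out-neighborhood produces an immediate contradiction. The argument does not even use $q\le 4$ (Lemma \ref{mixed arcs}) nor the semicomplete multipartite hypothesis, only that $\Gamma_{1,q-1}$ is a nonempty, non-diagonal relation of constant out-valency.
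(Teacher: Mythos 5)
Your proof is correct and follows essentially the same route as the paper: both derive $\Gamma_{1,q-1}(y)\subseteq\Gamma_{1,q-1}(x)$ from the assumed transitivity and then exploit the fact that $y$ lies in $\Gamma_{1,q-1}(x)$ but not in $\Gamma_{1,q-1}(y)$. The paper phrases the contradiction as $\Gamma_{1,q-1}(y)\cup\{y\}\subseteq\Gamma_{1,q-1}(x)$ violating the common valency, while you phrase it as forced set equality yielding $(y,y)\in\Gamma_{1,q-1}$; these are the same argument.
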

\begin{proof}
	 Suppose for the contrary that  $\Gamma_{1,q-1}^2=\{\Gamma_{1,q-1}\}$. Let $(x,y)\in \Gamma_{1,q-1}$. Since  $\Gamma_{1,q-1}^2=\{\Gamma_{1,q-1}\}$, we have $(x,z)\in \Gamma_{1,q-1}$  for all  $z\in \Gamma_{1,q-1}(y)$. Hence, $\Gamma_{1,q-1}(y)\cup \{y\} \subseteq \Gamma_{1,q-1}(x)$, a contradiction.
\end{proof}

\begin{lemma}\label{changyong2}
Let $s\geq3$. If $\tilde{\partial}(\Gamma)\backslash\{(1,q),(q,1)\mid q+1\in T\}\subseteq\{(0,0),(2,2),(s,s)\}$, then $\Gamma_{s,s}^2\subseteq\{\Gamma_{0,0},\Gamma_{s,s}\}$.
\end{lemma}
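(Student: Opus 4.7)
The plan is to verify $\Gamma_{s,s}^{2}\subseteq\{\Gamma_{0,0},\Gamma_{s,s}\}$ by eliminating every other two-way distance permitted by the hypothesis. By assumption, any $\wz{l}$ with $\Gamma_{\wz{l}}\in\Gamma_{s,s}^{2}$ lies in $\{(0,0),(2,2),(s,s)\}\cup\{(1,q),(q,1)\mid q+1\in T\}$, so it suffices to rule out $(2,2)$, $(1,q)$, and $(q,1)$.

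For the arc-type relations I would apply Lemma \ref{jichu3} directly: with $s\geq 3\geq 2$, taking both factors equal to $\Gamma_{s,s}$ in Lemma \ref{jichu3} immediately gives $\Gamma_{1,q}\notin \Gamma_{s,s}^{2}$ for every $q+1\in T$. The relation $\Gamma_{s,s}$ is symmetric (because $\partial(x,y)=s=\partial(y,x)$), so transposing a hypothetical witness of $\Gamma_{q,1}\in\Gamma_{s,s}^{2}$ would yield $\Gamma_{1,q}\in\Gamma_{s,s}^{2}$, again contradicting Lemma \ref{jichu3}.

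The substantive step is ruling out $\Gamma_{2,2}$, and this is where I expect the main obstacle to lie. Suppose for contradiction that $(x,y),(y,z)\in\Gamma_{s,s}$ and $(x,z)\in\Gamma_{2,2}$. Since $\partial(x,y)=s\geq 3$ and $\partial(y,z)=s\geq 3$, two applications of Lemma \ref{jichu2} give $N^{+}(x)=N^{+}(y)=N^{+}(z)$ and $N^{-}(x)=N^{-}(y)=N^{-}(z)$. Using $\partial(x,z)=2$, pick $w$ with $(x,w),(w,z)\in A\Gamma$. Then $w\in N^{+}(x)=N^{+}(y)$ forces $(y,w)\in A\Gamma$, while $w\in N^{-}(z)=N^{-}(x)$ forces $(w,x)\in A\Gamma$; concatenating gives $\partial(y,x)\leq 2$, contradicting $\partial(y,x)=s\geq 3$. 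The key realization is that the strong form of Lemma \ref{jichu2}, namely equality of the neighborhoods of vertices at distance $\geq 3$, lets the distance-$2$ detour between $x$ and $z$ be rerouted through $y$, collapsing $\partial(y,x)$.
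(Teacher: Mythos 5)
Your proposal is correct and follows essentially the same route as the paper: Lemma \ref{jichu3} reduces the problem to excluding $\Gamma_{2,2}$ from $\Gamma_{s,s}^2$, and then the neighborhood-equality part of Lemma \ref{jichu2} reroutes the length-two path between the endpoints of the $\Gamma_{2,2}$ pair through the middle vertex to contradict $s\geq 3$. The only cosmetic differences are that you collapse $\partial(y,x)$ rather than $\partial(y,z)$ and spell out the transpose argument for $\Gamma_{q,1}$, which the paper leaves implicit.
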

\begin{proof}
By Lemma \ref{jichu3}, we have $\Gamma_{s,s}^2\subseteq\{\Gamma_{0,0},\Gamma_{2,2},\Gamma_{s,s}\}$. It suffices to show that $\Gamma_{2,2}\notin\Gamma_{s,s}^2$. Suppose for the contrary that $\Gamma_{2,2}\in\Gamma_{s,s}^2$. Let $(x,z)\in\Gamma_{2,2}$ and $y\in P_{(s,s),(s,s)}(x,z)$. Then $y\notin N^+(x)\cup N^-(x)$. Pick a path $(x,w,z)$. Since $\partial(x,y)\geq 3$, from Lemma \ref{jichu2}, we obtain $w\in N^{+}(x)=N^{+}(y)$, which implies that $(y,w,z)$ is a path. It follows that $s=\partial(y,z)\leq 2$, a contradiction.
\end{proof}

For a nonempty closed subset $F$ of $\{\Gamma_{\wz i}\}_{\wz i\in \tilde{\partial}(\Gamma)}$,
let
$$V\Gamma/F :=\{F(x)\mid x\in V\Gamma\} ~{\rm and}~ \Gamma_{\tilde{i}}^F :=\{(F(x),F(y))\mid y\in F\Gamma_{\tilde{i}}F(x)\},$$
where $F(x):=\{y\in V\Gamma\mid (x,y)\in \underset{f\in F}{\cup}f\}$. The digraph $(V\Gamma/F,\underset{(1,s)\in \tilde{\partial}(\Gamma)}{\cup}\Gamma_{1,s}^F)$ is said to be the  {\em quotient digraph} of $\Gamma$ over $F$, denoted by $\Gamma/F$.

\begin{lemma}\label{tdxw1}
Let  $s\geq3$.	If $\tilde{\partial}(\Gamma)\backslash\{(1,q),(q,1)\mid q+1\in T\}\subseteq\{(0,0),(2,2),(s,s)\}$,
then $\Gamma\simeq \Sigma\circ \overline{{K}}_{k_{s,s}+1}$, where $\Sigma$ is a semicomplete multipartite commutative weakly distance-regular digraph with $\tilde{\partial}(\Sigma)= \tilde{\partial}(\Gamma)\backslash \{(s,s)\}$. 
\end{lemma}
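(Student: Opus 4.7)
The plan is to show that $F:=\{\Gamma_{0,0},\Gamma_{s,s}\}$ is a closed subset of $\{\Gamma_{\tilde i}\}_{\tilde i\in\tilde\partial(\Gamma)}$ and that its equivalence classes form the fibers of the desired lexicographic decomposition. The relation $\Gamma_{s,s}$ is symmetric, and Lemma \ref{changyong2} gives $\Gamma_{s,s}^{2}\subseteq\{\Gamma_{0,0},\Gamma_{s,s}\}$, so $F$ is closed. Hence $F(x)=\{x\}\cup\Gamma_{s,s}(x)$ is an equivalence class of size $k_{s,s}+1$; since $s\geq 3$, Lemma \ref{jichu0} places $F(x)$ inside a single partite set of $\Gamma$, so each $F(x)$ is an independent set.

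The heart of the argument is the \emph{constancy claim}: for every $\tilde i\in\tilde\partial(\Gamma)\setminus\{(s,s)\}$, every $(x,y)\in\Gamma_{\tilde i}$, and every $x'\in F(x)$, $y'\in F(y)$ with $F(x)\neq F(y)$, one has $(x',y')\in\Gamma_{\tilde i}$. I would split on whether $x,y$ lie in the same partite set. Suppose first they do not. Since $(x,x'),(y,y')\in\Gamma_{s,s}$ with $s\geq 3$, the ``moreover'' part of Lemma \ref{jichu2} yields $N^{+}(x)=N^{+}(x')$, $N^{-}(x)=N^{-}(x')$, and the analogues for $y,y'$. A shortest-path exchange then gives the claim: any shortest $x=w_{0}\to w_{1}\to\cdots\to w_{a}=y$ reroutes to $x'\to w_{1}\to\cdots\to w_{a-1}\to y'$, the first arc existing because $w_{1}\in N^{+}(x)=N^{+}(x')$ and the last because $w_{a-1}\in N^{-}(y)=N^{-}(y')$ (the case $a=1$ is handled by $y\in N^{+}(x')$ and $x'\in N^{-}(y')$). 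This yields $\partial(x',y')\leq\partial(x,y)$; the reverse inequality and the analogues $\partial(y,x)=\partial(y',x')$ follow by the same symmetric replacement, and only require distances $\leq 4$ by Lemma \ref{mixed arcs}. Suppose now $x,y$ share a partite set. Then $\tilde i\notin\{(0,0),(s,s)\}$ forces $\tilde i=(2,2)$. All of $x,x',y,y'$ are co-partite, so $\tilde\partial(x',y')\in\{(0,0),(2,2),(s,s)\}$; the value $(0,0)$ gives $x'=y'$ and $(s,s)$ gives $y'\in\Gamma_{s,s}(x')\subseteq F(x')$, each forcing $F(x)=F(y)$, contrary to hypothesis. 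Hence $(x',y')\in\Gamma_{2,2}=\Gamma_{\tilde i}$.

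With the constancy claim in hand, any labeling of each $F(x)$ by $\{1,\dots,k_{s,s}+1\}$ produces a bijection $V\Gamma\to V\Sigma\times\{1,\dots,k_{s,s}+1\}$, where $\Sigma:=\Gamma/F$. Constancy together with the coclique structure of each $F(x)$ makes this bijection a digraph isomorphism $\Gamma\simeq\Sigma\circ\overline{K}_{k_{s,s}+1}$: inter-class arcs are determined entirely by arcs in $\Sigma$, while neither $\Gamma$ nor the lexicographic product has intra-class arcs. Because $F$ is closed, $\Sigma$ inherits the structure of a commutative weakly distance-regular digraph, with $\tilde\partial(\Sigma)=\tilde\partial(\Gamma)\setminus\{(s,s)\}$, since $\Gamma_{s,s}$ collapses to the diagonal while every other relation descends bijectively. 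Collapsing each $F(x)$ inside the underlying $K^{k}_{m}$ of $\Gamma$ leaves a complete multipartite graph, so $\Sigma$ is semicomplete multipartite.

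The main obstacle is the shortest-path exchange in the first case, where both endpoints of an arbitrary shortest path must be replaced simultaneously without altering its length. This is precisely where Lemma \ref{jichu2}'s equality of first in- and out-neighborhoods is indispensable; only the first and last arcs of the path are modified, so the argument goes through uniformly for every distance realized in $\Gamma$.
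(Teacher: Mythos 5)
Your proposal is correct and follows essentially the same route as the paper: quotient by the closed subset $F=\{\Gamma_{0,0},\Gamma_{s,s}\}$ (closure from Lemma \ref{changyong2}) and use the neighbourhood equalities of Lemma \ref{jichu2} to transfer adjacency between $F$-classes. The only cosmetic difference is that you establish constancy of every relation at once via a shortest-path exchange, whereas the paper proves the claim only for arcs and then separately checks that distances (and hence the two-way distance partition and intersection numbers) descend to $\Sigma$.
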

\begin{proof}
	Let $F=\{\Gamma_{0,0},\Gamma_{s,s}\}$. By Lemma \ref{changyong2}, 	$F$ is closed. We claim that $(x',y')\in A\Gamma$ for $x'\in F(x)$ and $y'\in F(y)$ when $(x,y)\in A\Gamma$. Without loss of generality, we may assume $x'\neq x$. Since $F=\{\Gamma_{0,0},\Gamma_{s,s}\}$, we have $(x,x')\in\Gamma_{s,s}$. It follows that $x'\notin N^+(x)\cup  N^-(x)$. Since $\partial(x,x')=s\geq3$, from Lemma \ref{jichu2}, we have $y\in N^+(x)=N^+(x')$.  If $y=y'$, then our claim is valid. If $y\neq y'$, then $(y,y')\in\Gamma_{s,s}$, and so $x'\in N^-(y)=N^-(y')$ by Lemma \ref{jichu2} again. Thus, our claim is valid.

Since $F=\{\Gamma_{0,0},\Gamma_{s,s}\}$, one gets $|F(w)|=k_{s,s}+1$ for all  $w\in V\Gamma$. Denote $\Sigma:=\Gamma/F$. By the claim,  $\Gamma$ is isomorphic to $\Sigma\circ \overline{{K}}_{k_{s,s}+1}$.

Let $x,y\in V\Gamma$ and  $y\notin F(x)$. Pick a shortest path $(x=x_0,x_1,\ldots,x_l=y)$ from $x$ to $y$ in $\Gamma$. Since $F=\{\Gamma_{0,0},\Gamma_{s,s}\}$,  $(F(x_0),F(x_1),\ldots,F(x_{l}))$ is a path in $\Sigma$, and so $\partial_{\Sigma}(F(x),F(y))\leq\partial_{\Gamma}(x,y)$.

Pick a shortest path $(F(x)=F(y_0),F(y_1),\ldots,F(y_{h})=F(y))$ from $F(x)$ to $F(y)$ in $\Sigma$. It follows that there exists $y_i'\in F(y_i)$ for each $i\in\{0,1,\ldots,h\}$ such that $(y_0',y_1',\ldots,y_h')$ is a path in $\Gamma$. By the claim,  $(x,y_1',\ldots,y_{h-1}',y)$ is a path in $\Gamma$. It follows that $\partial_{\Gamma}(x,y)\leq\partial_{\Sigma}(F(x),F(y))$. Thus, $\partial_{\Gamma}(x,y)=\partial_{\Sigma}(F(x),F(y))$ and $\tilde{\partial}(\Sigma)= \tilde{\partial}(\Gamma)\backslash \{(s,s)\}$.

Let $\wz{h}\in\wz{\partial}(\Sigma)$ and $(F(u),F(v))\in\Sigma_{\wz{h}}$. Then $(u,v)\in \Gamma_{\wz{h}}$. For $\wz{i},\wz{j}\in\wz{\partial}(\Sigma)$, we have
$$P_{\wz{i},\wz{j}}(u,v)=\cup_{w\in P_{\wz{i},\wz{j}}(u,v)}F(w)
=\cup_{F(w)\in \Sigma_{\wz{i}}(F(u))\cap\Sigma_{\wz{j}^*}(F(v))}F(w).$$
Since  $|F(w)|=k_{s,s}+1$ for all  $w\in V\Gamma$, we have $$|\Sigma_{\wz{i}}(F(u))\cap\Sigma_{\wz{j}^*}(F(v))|=\frac{p_{\wz{i},\wz{j}}^{\wz{h}}}{k_{s,s}+1}.$$ Since $\Gamma$ is a commutative weakly distance-regular digraph, $\Sigma$ is a  commutative weakly distance-regular digraph. Since $\Gamma$ is  semicomplete multipartite, from Lemma \ref{jichu0}, $\Sigma$ is  semicomplete multipartite.
\end{proof}

\begin{lemma}\label{le216fj}
	Let $x\in V\Gamma$,  $p_{(1,p-1),(1,q-1)}^{(2,2)}>0$ and  $\{\wz{i},\wz{j}\}= \{(1,p-1),(q-1,1)\}$ with $p,q\in T$. Suppose that there are exactly $t$ partite sets having nonempty intersection with $\Gamma_{\wz{i}}(x)$. Then  $|\Gamma_{\wz{j}}(x)\cap V|=p_{\wz{j},(2,2)}^{\wz{i}}+p_{\wz{j},(0,0)}^{\wz{i}}=\frac{k_{\wz{j}}}{t}$, where $V$ is a partite set with $\Gamma_{\wz{i}}(x)\cap V\ne \emptyset$.
\end{lemma}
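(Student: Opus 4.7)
The plan is to fix a vertex $y\in\Gamma_{\wz{i}}(x)\cap V$, stratify $\Gamma_{\wz{j}}(x)\cap V$ according to the relation from $y$, then kill off all but two intersection numbers using the triangle inequality together with Lemma~\ref{changyong}, and finally identify the constant value with $k_{\wz{j}}/t$ by showing that $\Gamma_{\wz{j}}(x)$ is covered by the $t$ relevant partite sets.

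By Lemma~\ref{jichu0} every $z\in V\setminus\{y\}$ satisfies $(z,y)\in\Gamma_{a,b}$ for some $(a,b)\in\wz{\partial}(\Gamma)$ with $a,b\geq 2$, so stratifying by the relation $(z,y)$ yields
\[
|\Gamma_{\wz{j}}(x)\cap V|=p_{\wz{j},(0,0)}^{\wz{i}}+\sum_{(a,b)\in\wz{\partial}(\Gamma),\,a,b\geq 2}p_{\wz{j},(a,b)}^{\wz{i}}.
\]
The crux is to show $p_{\wz{j},(a,b)}^{\wz{i}}=0$ whenever $a,b\geq 2$ and $(a,b)\neq(2,2)$. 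Suppose for contradiction that a triple $(x,y,z)$ realizes this count: $(x,y)\in\Gamma_{\wz{i}}$, $(x,z)\in\Gamma_{\wz{j}}$, and $(z,y)\in\Gamma_{a,b}$. Since $\{\wz{i},\wz{j}\}=\{(1,p-1),(q-1,1)\}$, either assignment produces a directed path of length two through $x$: if $\wz{i}=(1,p-1)$ and $\wz{j}=(q-1,1)$, then $z\to x\to y$ forces $a=\partial(z,y)\leq 2$; if $\wz{i}=(q-1,1)$ and $\wz{j}=(1,p-1)$, then $y\to x\to z$ forces $b=\partial(y,z)\leq 2$. Hence one of $a,b$ equals $2$, and applying Lemma~\ref{changyong} to $(2,b)$ or to $(a,b)^{*}=(b,a)=(2,a)$ forces the other to be $\leq 2$; combined with $a,b\geq 2$ this forces $(a,b)=(2,2)$, a contradiction. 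Thus $|\Gamma_{\wz{j}}(x)\cap V|=p_{\wz{j},(2,2)}^{\wz{i}}+p_{\wz{j},(0,0)}^{\wz{i}}$.

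For the identity $|\Gamma_{\wz{j}}(x)\cap V|=k_{\wz{j}}/t$, I would show that any $z\in\Gamma_{\wz{j}}(x)$ lies in one of the $t$ partite sets meeting $\Gamma_{\wz{i}}(x)$. Let $V_z$ be the partite set of $z$; running the previous step with $\wz{i}$ and $\wz{j}$ swapped gives $|\Gamma_{\wz{i}}(x)\cap V_z|=p_{\wz{i},(2,2)}^{\wz{j}}+p_{\wz{i},(0,0)}^{\wz{j}}$. When $\wz{i}=\wz{j}$ (necessarily $p=q=2$), the term $p_{\wz{i},(0,0)}^{\wz{j}}=1$ already makes this positive; when $\wz{i}\neq\wz{j}$, the hypothesis $p_{(1,p-1),(1,q-1)}^{(2,2)}>0$ together with Lemma~\ref{jb} \ref{jb-2} gives $p_{\wz{i},(2,2)}^{\wz{j}}>0$. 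Either way $V_z$ meets $\Gamma_{\wz{i}}(x)$, so $\Gamma_{\wz{j}}(x)$ partitions over precisely these $t$ partite sets and $k_{\wz{j}}=t\bigl(p_{\wz{j},(2,2)}^{\wz{i}}+p_{\wz{j},(0,0)}^{\wz{i}}\bigr)$. The main obstacle is the crux case analysis, but once one recognises the length-two directed path through $x$, Lemma~\ref{changyong} finishes it immediately.
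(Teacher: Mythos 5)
Your proof is correct and takes essentially the same route as the paper: stratify $\Gamma_{\wz{j}}(x)\cap V$ by the relation to a fixed $y\in\Gamma_{\wz{i}}(x)\cap V$, use the length-two directed path through $x$ together with Lemma \ref{changyong} to reduce to $p_{\wz{j},(2,2)}^{\wz{i}}+p_{\wz{j},(0,0)}^{\wz{i}}$, and then use the positivity of an intersection number derived from $p_{(1,p-1),(1,q-1)}^{(2,2)}>0$ via Lemma \ref{jb}\ref{jb-2} to show every partite set meeting $\Gamma_{\wz{j}}(x)$ also meets $\Gamma_{\wz{i}}(x)$. The only cosmetic difference is in the last step, where the paper exhibits a witness $z\in P_{(2,2),\wz{i}^{*}}(v,x)$ directly while you rerun the counting formula with $\wz{i}$ and $\wz{j}$ swapped; both rest on the same positivity fact.
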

\begin{proof}
Let $V_0,V_1,\ldots,V_t$ be distinct partite sets such that  $x\in V_0$ and $\Gamma_{\tilde{i}}(x)\cap V_r\neq\emptyset$ for $1\leq r\leq t$. Pick  a vertex $x_r\in \Gamma_{\wz{i}}(x)\cap V_{r}$ for each $r\in \{1,\ldots,t\}.$

By Lemma \ref{jb} \ref{jb-2} and Lemma \ref{changyong}, one has $\{(2,2)\}=\{(h,l)\mid p_{\tilde{j},(h,l)}^{\tilde{i}}>0~\textrm{with}~h,l>1\}$. In view of Lemma \ref{jichu0}, $y\in P_{\wz{j},(2,2)}(x,x_r)\cup P_{\wz{j},(0,0)}(x,x_r)$ if and only if $y\in \Gamma_{\wz{j}}(x)\cap V_{r}$ for each $r\in \{1,\ldots,t\}$, which  implies
\begin{align}\label{2.11}\tag{4.1}
|\Gamma_{\wz{j}}(x)\cap V_{r}|=p_{\wz{j},(2,2)}^{\wz{i}}+p_{\wz{j},(0,0)}^{\wz{i}}.
\end{align}

Let $v\in\Gamma_{\tilde{j}}(x)\cap V'$ for some partite set $V'$. Since $\wz{j}\in \{(1,p-1),(q-1,1)\}$, we have $V'\neq V_0$. Since $p_{(1,p-1),(1.q-1)}^{(2,2)}>0$, from Lemma \ref{jb} \ref{jb-2}, one has  $p_{(2,2),\wz{i}^*}^{\wz{j}^*}>0$, which implies that  there exists $z\in P_{(2,2),\wz{i}^*}(v,x)$. It follows  from Lemma \ref{jichu0} that $z\in V'$, and so $z\in\Gamma_{\wz{i}}(x)\cap V'$. It follows that $V'\in\{V_1,V_2,\ldots,V_t\}$, and so $v\in\cup_{1\leq r\leq t}(\Gamma_{\wz{j}}(x)\cap V_{r})$. Since $v\in\Gamma_{\tilde{j}}(x)$ was arbitrary, we get $\cup_{1\leq r\leq t}(\Gamma_{\wz{j}}(x)\cap V_{r})=\Gamma_{\tilde{j}}(x)$. The desired result follows from \eqref{2.11}.
\end{proof}

\begin{lemma}\label{main3''}
Let $x\in V\Gamma$ and $\wz{i}\in \{(1,q-1),(q-1,1)\mid q\in T\}$. Suppose $|\Gamma_{\wz{i}}(x)\cap V|\ne \emptyset$ for some partite set $V$ of $\Gamma$. Then
	$\sum_{h,l>1}p_{\wz{i},(h,l)}^{\wz{i}}=|\Gamma_{\wz{i}}(x)\cap V|-1$.
\end{lemma}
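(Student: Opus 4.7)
The plan is to fix an arbitrary $y\in\Gamma_{\wz{i}}(x)\cap V$ (which exists by hypothesis) and count the remaining elements of $\Gamma_{\wz{i}}(x)\cap V$ by partitioning them according to their two-way distance from $y$. The desired identity will then drop out of the definition of the intersection numbers.

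The key observation is a double use of Lemma~\ref{jichu0}. For the forward direction, if $z\in\Gamma_{\wz{i}}(x)\cap V$ with $z\neq y$, then $y$ and $z$ are distinct vertices in the same partite set $V$, so Lemma~\ref{jichu0} gives $z\notin N^{+}(y)\cup N^{-}(y)$; consequently $\wz{\partial}(y,z)=(h,l)$ with $h,l\geq 2$. For the converse, if $(x,z)\in\Gamma_{\wz{i}}$ and $(y,z)\in\Gamma_{(h,l)}$ with $h,l>1$, then $z\notin N^{+}(y)\cup N^{-}(y)$, and another application of Lemma~\ref{jichu0} places $z$ in the same partite set as $y$, i.e.\ $z\in V$. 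Together these show
\[
\Gamma_{\wz{i}}(x)\cap V\setminus\{y\}\;=\;\bigsqcup_{h,l>1}\{z:(x,z)\in\Gamma_{\wz{i}},\ (y,z)\in\Gamma_{(h,l)}\}.
\]

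Since $(x,y)\in\Gamma_{\wz{i}}$ and $(y,z)\in\Gamma_{(h,l)}$ is equivalent to $(z,y)\in\Gamma_{(l,h)}$, each of the sets on the right has cardinality $p^{\wz{i}}_{\wz{i},(l,h)}$. Summing over $h,l>1$ and relabelling the indices $(h,l)\leftrightarrow(l,h)$ gives $|\Gamma_{\wz{i}}(x)\cap V|-1=\sum_{h,l>1}p^{\wz{i}}_{\wz{i},(h,l)}$, as claimed. There is essentially no obstacle: the only care required is translating between $(y,z)\in\Gamma_{(h,l)}$ and the form $(z,y)\in\Gamma_{(h,l)^{*}}$ demanded by the definition of $P_{\wz{i},\wz{j}}(x,y)$, but the symmetry of the summation range in $(h,l)$ absorbs this transposition.
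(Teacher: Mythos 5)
Your proposal is correct and follows essentially the same route as the paper: fix one vertex $x_1\in\Gamma_{\wz{i}}(x)\cap V$, use Lemma \ref{jichu0} in both directions to identify $\Gamma_{\wz{i}}(x)\cap V\setminus\{x_1\}$ with $\bigcup_{h,l>1}P_{\wz{i},(h,l)}(x,x_1)$, and read off the intersection numbers. Your extra care about the transposition $(h,l)\leftrightarrow(l,h)$ is a detail the paper leaves implicit, but the argument is the same.
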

\begin{proof}
Pick a vertex $x_1\in \Gamma_{\wz{i}}(x)\cap V$. Since  $$\sum_{h,l>1}p_{\wz{i},(h,l)}^{\wz{i}}=\sum_{h,l>1}|P_{\wz{i},(h,l)}(x,x_1)|,$$ from Lemma \ref{jichu0}, we have $\sum_{h,l>1}|P_{\wz{i},(h,l)}(x,x_1)|+1=|\Gamma_{\wz{i}}(x)\cap V|$. Thus, the desired result holds.
\end{proof}

\section{Possibility of $T$}
In  this section, $\Gamma$ always denotes  a semicomplete multipartite commutative weakly distance-regular digraph. The main result of this section is as follows.

\begin{prop}\label{g234mprop}
Let $T=\{q\mid (1,q-1)\in \tilde{\partial}(\Gamma)\}$. Then
\[T=\{3\},\quad T=\{4\},\quad T=\{3,4\},
\quad or \quad T=\{2,3\}.\]
\end{prop}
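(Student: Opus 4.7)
The plan is to first establish $T\subseteq\{2,3,4\}$ and then eliminate the three remaining bad configurations $T=\{2\}$, $T=\{2,4\}$, and $T=\{2,3,4\}$. The containment $T\subseteq\{2,3,4\}$ is immediate: $q\geq 2$ since $(1,0)\notin \tilde\partial(\Gamma)$, and $q\leq 4$ is Lemma \ref{mixed arcs}. For $T=\{2\}$, every arc is a double arc, so $A\Gamma$ is symmetric; combined with Lemma \ref{changyong} (banning $(2,s)$ with $s\geq 3$) and Lemma \ref{zuidaarc} (banning $(s,t)$ with $s,t\geq 3$), this forces $\tilde\partial(\Gamma)\subseteq\{(0,0),(1,1),(2,2)\}$, making the attached scheme symmetric and contradicting the defining non-symmetry of the attached scheme of a weakly distance-regular digraph.

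The heart of the proof is ruling out $\{2,4\}\subseteq T$. I would fix $(x,z)\in\Gamma_{1,3}$ (available since $4\in T$) and a double-arc neighbour $w\in\Gamma_{1,1}(x)$ (available since $2\in T$). The key local constraint is that $\partial(z,x)=3$ forbids every $2$-path $z\to y\to x$. Since $(w,x)$ is a double arc, the hypothesis $(z,w)\in A\Gamma$ would create such a forbidden $2$-path $z\to w\to x$; hence $(z,w)\notin A\Gamma$. This eliminates $(w,z)\in\Gamma_{1,1},\Gamma_{3,1},\Gamma_{2,1}$ and leaves only $(w,z)\in\Gamma_{1,3},\Gamma_{2,2}$, together with $(w,z)\in\Gamma_{1,2}$ when $3\in T$. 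Translating to intersection numbers yields $p_{(1,1),(1,1)}^{(1,3)}=p_{(1,1),(3,1)}^{(1,3)}=p_{(1,1),(2,1)}^{(1,3)}=0$, with parallel vanishings obtained by exchanging the roles of $x$ and $z$.

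For $T=\{2,4\}$ the attached scheme has exactly the five relations $\{(0,0),(1,1),(1,3),(3,1),(2,2)\}$ with $(1,3)^{\top}=(3,1)$ the sole non-symmetric pair and $(V\Gamma,\Gamma_{2,2})$ disconnected, so Proposition \ref{main prop doubly} applies and $\Gamma$ is a doubly regular team semicomplete multipartite digraph of Type I or II. Type I would force $\Gamma\cong \Sigma\circ\overline{K}_r$ for a semicomplete weakly distance-regular $\Sigma$ inheriting $T_\Sigma=\{2,4\}$; but in a semicomplete digraph any shortest $3$-path $z\to u\to v\to x$ has $\partial(u,x)=2$ with $u,x$ adjacent, forcing $(x,u)\in\Gamma_{1,2}$ by Lemma \ref{changyong}, contradicting $3\notin T_\Sigma$. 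Type II is then excluded by feeding the balance $c_{ij}=c_{ji}$ into the intersection-number vanishings above together with Lemma \ref{pssize} and the valency equation $k_{1,1}+2k_{1,3}=(k-1)m$. For $T=\{2,3,4\}$, Proposition \ref{main prop doubly} no longer applies (six non-diagonal classes), but the enriched $2$-path analysis, now also constraining the $(1,2)$ and $(2,1)$ relations adjacent to double-arc neighbourhoods, combined with Lemmas \ref{le216fj}, \ref{main3''}, \ref{pssize}, and the valency equation $k_{1,1}+2k_{1,2}+2k_{1,3}=(k-1)m$, yields the required numerical contradiction. The main obstacle is this last step: while the $2$-path argument trivially kills several $(w,z)$ configurations, extracting an actual contradiction requires either the strong structural consequence of Proposition \ref{main prop doubly} (for $T=\{2,4\}$) or a delicate bookkeeping of partite-set intersections via the commutativity identities of Lemma \ref{jb} \ref{jb-2} (for $T=\{2,3,4\}$).
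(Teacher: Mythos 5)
Your reduction to excluding $T=\{2\}$, $T=\{2,4\}$ and $T=\{2,3,4\}$ is correct, and your disposal of $T=\{2\}$ (the attached scheme would be symmetric) and of Type I in the $\{2,4\}$ case (a semicomplete digraph containing a type-$(1,3)$ arc must contain a type-$(1,2)$ arc) are sound. The gap is that the two hard exclusions are asserted rather than proved. For $T=\{2,4\}$ with Type II, the ingredients you list do not suffice: when there are only two partite sets the Type II balance $c_{ij}=c_{ji}$ holds automatically (every $2$-path between the two parts would need a middle vertex lying in neither part, so $\alpha_1=\alpha_2=\beta_1=\beta_2=0$), the vanishings $p_{(1,1),(1,1)}^{(1,3)}=p_{(1,1),(3,1)}^{(1,3)}=0$ are vacuous (all of $\Gamma_{1,1}(x)\cup\Gamma_{1,3}(x)$ lies in the opposite part, so $(w,z)\in\Gamma_{2,2}$ for free), and Lemma \ref{pssize} degenerates to the valency equation. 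Nothing in your list distinguishes this configuration from an admissible one, so no contradiction follows. The paper instead first computes $p_{(3,1),(2,2)}^{(1,3)}=p_{(1,3),(2,2)}^{(1,3)}+1$ (via Lemmas \ref{le216fj} and \ref{main3''}), then proves the nontrivial claim $\Gamma_{2,2}\in\Gamma_{1,1}\Gamma_{1,3}$, and finally feeds both into the associativity identity of Lemma \ref{jb} \ref{jb-4} to force $p_{(3,1),(2,2)}^{(1,3)}=p_{(1,3),(2,2)}^{(1,3)}$; commutativity enters essentially at this last step, and none of your ingredients substitutes for it. (Your cascade $e_{il}=e_{jl}=0$ for a third part $l$ would kill the case of at least three parts, but you neither state it nor address the two-part case.)

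The $T=\{2,3,4\}$ case is in worse shape: you concede it needs "delicate bookkeeping" but supply no argument. In the paper this is the longest portion of the proof (six substeps), and it hinges on claims that are not consequences of your $2$-path analysis — for instance $\Gamma_{2,2}\in\Gamma_{1,2}\Gamma_{1,3}$, then $\Gamma_{2,2}\in\Gamma_{1,3}^2$, and the evaluations $p_{(1,2),(2,2)}^{(1,3)}=k_{1,2}/t$ and $t=1$ — each obtained from Lemma \ref{jb} \ref{jb-4} or from a careful count of how the sets $\Gamma_{1,1}(x),\Gamma_{1,2}(x),\ldots,\Gamma_{3,1}(x)$ meet a single partite set, before a final size comparison of two partite sets yields the contradiction. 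As written, your proposal establishes only $T\subseteq\{2,3,4\}$, the case $T=\{2\}$, and Type I of $T=\{2,4\}$; the core of the proposition is missing.
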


In view of Lemma \ref{mixed arcs}, we only need to prove that $T\ne \{2,4\}$ and $T\ne \{2,3,4\}$.
Next we divide this proof into two steps.


\begin{step}
Show that	$T\ne \{2,4\}$.
\end{step}
By way of contradiction, we may assume that $T=\{2,4\}$. Lemmas \ref{changyong} and \ref{zuidaarc} imply that
\begin{align}
\Gamma_{1,q}\Gamma_{1,p}&\subseteq\{\Gamma_{1,3},\Gamma_{2,2}\},\tag{5.1}\label{g24-1}\\
\Gamma_{\tilde{i}}\Gamma_{\tilde{i}^*}&\subseteq \{\Gamma_{0,0},\Gamma_{1,1}, \Gamma_{1,3},\Gamma_{3,1},\Gamma_{2,2}\},\label{g24-2}\tag{5.2}
\end{align}
where  $(p,q)\neq(1,1)$ and $\tilde{i}\in\{(1,3),(3,1)\}$.
Next, we reach a contradiction step by step. Pick a vertex $x\in V\Gamma$. Assume that there are exactly $t$ partite sets having nonempty intersection with $\Gamma_{1,3}(x)$.

\begin{substep}\label{g24gs}
 $p_{(3,1),(2,2)}^{(1,3)}=p_{(1,3),(2,2)}^{(1,3)}+1=\frac{k_{1,3}}{t}$.
\end{substep}
By \eqref{g24-1} and Lemma \ref{one}, one gets $\Gamma_{2,2}\in \Gamma_{1,3}^2$.   Let  $V$ be a partite set with $\Gamma_{1,3}(x)\cap V\ne \emptyset$. By Lemma \ref{le216fj}, one has  $|\Gamma_{3,1}(x)\cap V|=p_{(3,1),(2,2)}^{(1,3)}=\frac{k_{1,3}}{t}$. In view of  \eqref{g24-2}, Lemma \ref{jb} \ref{jb-2} and Lemma \ref{main3''}, we get $p_{(1,3),(2,2)}^{(1,3)}=p_{(3,1),(2,2)}^{(3,1)}=\frac{k_{1,3}}{t}-1$. The desired result follows.

\begin{substep}\label{g24zzjiegou}
	We reach a contradiction.
\end{substep}

We claim that 		$\Gamma_{2,2}\in \Gamma_{1,1}\Gamma_{1,3}$.
Suppose for the contrary that $\Gamma_{2,2}\notin \Gamma_{1,1}\Gamma_{1,3}$.  By  \eqref{g24-1}, one has $ \Gamma_{1,1}\Gamma_{1,3}=\{\Gamma_{1,3}\}$.
Let $(x,y)\in \Gamma_{1,3}$ and $(y,z)\in \Gamma_{1,1}$. Then $(x,z)\in \Gamma_{1,3}$. By Step \ref{g24gs}, one has  $p_{(3,1),(2,2)}^{(1,3)}\ne 0$, which implies that there exists a vertex $w\in P_{(3,1),(2,2)}(x,y)$. It follows  that $w\notin N^+(y)\cup N^-(y)$. Since $(y,z)\in \Gamma_{1,1}$, from Lemma \ref{jichu2}, we have $z\in  N^+(w)\cup N^-(w)$. Since $(w,x,z)$ is a path consisting of arcs of type $(1,3)$, from \eqref{g24-1}, one obtains $(w,z)\in \Gamma_{1,3}$. Since $ \Gamma_{1,1}\Gamma_{1,3}=\{\Gamma_{1,3}\}$ and  $z\in P_{(1,3),(1,1)}(w,y)$, we get $(w,y)\in \Gamma_{1,3}$, a contradiction.
 Thus, our claim is valid.

	By setting $i=(1,1)$ and $l=j=m^*=(1,3)$ in Lemma \ref{jb} \ref{jb-4}, from \eqref{g24-1} and  Lemma \ref{jb} \ref{jb-2}, we get
	$$p_{(1,1),(1,3)}^{(1,3)}p_{(1,3),(3,1)}^{(1,3)}+p_{(1,1),(1,3)}^{(2,2)}p_{(2,2),(3,1)}^{(1,3)}=p_{(1,1),(3,1)}^{(3,1)}p_{(1,3),(3,1)}^{(1,3)}+p_{(1,1),(3,1)}^{(2,2)}p_{(1,3),(2,2)}^{(1,3)},$$
which implies $p_{(1,1),(1,3)}^{(2,2)}p_{(2,2),(3,1)}^{(1,3)}=p_{(1,1),(3,1)}^{(2,2)}p_{(1,3),(2,2)}^{(1,3)}.$
	By the  claim, we get $p_{(1,1),(1,3)}^{(2,2)}=p_{(1,1),(3,1)}^{(2,2)}\ne 0$. It follows from Step \ref{g24gs} that $\frac{k_{1,3}}{t}=p_{(3,1),(2,2)}^{(1,3)}=p_{(1,3),(2,2)}^{(1,3)}=\frac{k_{1,3}}{t}-1$, a contradiction.


\begin{step}
	Show that	 $T\ne\{2,3,4\}$.
\end{step}

By way of contradiction, we may assume that $T=\{2,3,4\}$. Lemmas \ref{changyong} and \ref{zuidaarc} imply that
\begin{align}
	\Gamma_{1,1}^2&\subseteq \{\Gamma_{0,0},\Gamma_{1,1},\Gamma_{1,2},\Gamma_{2,1},\Gamma_{2,2}\},\label{g234-1'}\tag{5.3}\\
		\Gamma_{1,2}^2&\subseteq \{\Gamma_{1,1},\Gamma_{1,2},\Gamma_{2,1},\Gamma_{1,3},\Gamma_{2,2}\},\label{g234-1''}\tag{5.4}\\
		 \Gamma_{1,3}\Gamma_{3,1}&\subseteq \{\Gamma_{0,0},\Gamma_{1,1},\Gamma_{1,2},\Gamma_{2,1},\Gamma_{1,3},\Gamma_{3,1},\Gamma_{2,2}\},\label{g234-4}\tag{5.5}\\
		 \Gamma_{1,3}\Gamma_{2,1}&\subseteq \{ \Gamma_{1,1},\Gamma_{1,2},\Gamma_{2,1},\Gamma_{1,3},\Gamma_{3,1},\Gamma_{2,2}\},\label{g234-3}\tag{5.6}\\
\Gamma_{1,q}\Gamma_{1,3}&\subseteq \{ \Gamma_{1,2},\Gamma_{1,3},\Gamma_{2,2}\},\label{g234-1}\tag{5.7}
	\end{align}
where $q\in \{1,2,3\}$.

Pick a vertex $x\in V\Gamma$. Assume that there are exactly $t$ partite sets having nonempty intersection with $\Gamma_{3,1}(x)$.
Next, we reach a contradiction step by step.



\begin{substep}\label{t234xj2}
Show that	 $\Gamma_{2,2}\in \Gamma_{1,2}\Gamma_{1,3}$.
\end{substep}

Assume the contrary, namely, $\Gamma_{2,2}\notin \Gamma_{1,2}\Gamma_{1,3}$.
By  \eqref{g234-3}, \eqref{g234-1} and Lemma \ref{jb} \ref{jb-2}, \ref{jb-3}, we get
\begin{align}
k_{1,2}&=p_{(1,2),(1,1)}^{(1,3)}+p_{(1,2),(1,2)}^{(1,3)}+p_{(1,2),(2,1)}^{(1,3)}+p_{(1,2),(1,3)}^{(1,3)}+p_{(1,2),(3,1)}^{(1,3)}+p_{(1,2),(2,2)}^{(1,3)},\nonumber\\
k_{1,2}&=p_{(2,1),(1,2)}^{(1,3)}+p_{(2,1),(1,3)}^{(1,3)}=p_{(1,2),(2,1)}^{(1,3)}+p_{(1,2),(1,3)}^{(1,3)},\nonumber
\end{align}
 which imply  $p_{(1,2),(1,1)}^{(1,3)}=p_{(1,2),(3,1)}^{(1,3)}=0$. By Lemma \ref{jb} \ref{jb-2}, one has $p_{(1,3),(1,1)}^{(1,2)}=p_{(1,3),(1,3)}^{(1,2)}=0$. By  \eqref{g234-1}, we obtain $\Gamma_{1,3}\Gamma_{1,1},\Gamma_{1,3}^2\subseteq \{\Gamma_{1,3},\Gamma_{2,2}\}$.

By Lemma \ref{one}, one gets $\Gamma_{2,2}\in \Gamma_{1,3}^2$. Pick a partite set $V$ such that $\Gamma_{3,1}(x)\cap V\ne \emptyset$. By Lemma \ref{le216fj}, one obtains $|\Gamma_{1,3}(x)\cap V|=p_{(1,3),(2,2)}^{(3,1)}=\frac{k_{1,3}}{t}$. In view of \eqref{g234-4},  Lemma \ref{jb} \ref{jb-2} and Lemma \ref{main3''}, we have $p_{(1,3),(2,2)}^{(1,3)}=\frac{k_{1,3}}{t}-1$.

Since $\Gamma_{1,3}^2\subseteq\{\Gamma_{1,3},\Gamma_{2,2}\}$, from Lemma \ref{jb} \ref{jb-2} and \ref{jb-3}, one has $k_{1,3}=p_{(3,1),(1,3)}^{(1,3)}+p_{(3,1),(2,2)}^{(1,3)}.$ Since $p_{(3,1),(2,2)}^{(1,3)}=p_{(1,3),(2,2)}^{(3,1)}=\frac{k_{1,3}}{t}$, we get
\begin{align}
(1-\frac{1}{t})k_{1,3}=p_{(3,1),(1,3)}^{(1,3)}=p_{(1,3),(1,3)}^{(1,3)}.\label{step-1}\tag{5.8}
\end{align}
	 By \eqref{g234-4} and Lemma \ref{jb} \ref{jb-2}, \ref{jb-3}, one has
	\begin{align}
k_{1,3}=p_{(1,3),(1,1)}^{(1,3)}+p_{(1,3),(1,2)}^{(1,3)}+p_{(1,3),(2,1)}^{(1,3)}+p_{(1,3),(1,3)}^{(1,3)}+p_{(1,3),(3,1)}^{(1,3)}+p_{(1,3),(2,2)}^{(1,3)}+1.\nonumber
\end{align}
By substituting $p_{(1,3),(2,2)}^{(1,3)}=\frac{k_{1,3}}{t}-1$ into the above equation, one has
\begin{align}
(1-\frac{1}{t})k_{1,3}=p_{(1,3),(1,1)}^{(1,3)}+p_{(1,3),(1,2)}^{(1,3)}+p_{(1,3),(2,1)}^{(1,3)}+p_{(1,3),(1,3)}^{(1,3)}+p_{(1,3),(3,1)}^{(1,3)}.\label{step-2}\tag{5.9}
	\end{align}
By \eqref{step-1} and \eqref{step-2}, we get $p_{(1,3),(1,1)}^{(1,3)}=0$, which implies  $\Gamma_{1,3}\Gamma_{1,1}=\{\Gamma_{2,2}\}$.
	
	In view of Lemma \ref{jb} \ref{jb-2}, we have $p_{(1,3),(3,1)}^{(2,2)}=\frac{k_{1,3}}{k_{2,2}}p_{(2,2),(1,3)}^{(1,3)}$ and $p_{(2,2),(1,1)}^{(1,3)}=\frac{k_{2,2}}{k_{1,3}}p_{(1,3),(1,1)}^{(2,2)}$. By setting $i=j=(1,3)$, $l=(1,1)$ and $m=(3,1)$ in Lemma \ref{jb} \ref{jb-4}, one obtains
	\begin{align}
	p_{(1,3),(1,1)}^{(2,2)}p_{(2,2),(3,1)}^{(1,3)}=p_{(1,3),(3,1)}^{(2,2)}p_{(2,2),(1,1)}^{(1,3)}=p_{(2,2),(1,3)}^{(1,3)}p_{(1,3),(1,1)}^{(2,2)}.\nonumber
	\end{align}
Since $p_{(1,3),(1,1)}^{(2,2)}\ne 0$, one has $\frac{k_{1,3}}{t}=p_{(2,2),(3,1)}^{(1,3)}=p_{(2,2),(1,3)}^{(1,3)}=\frac{k_{1,3}}{t}-1$, a contradiction.

\begin{substep}\label{G1213fj}
Show that $p_{(1,3),(2,2)}^{(1,3)}=\frac{k_{1,3}}{t}-1$ and 	  $|\Gamma_{3,1}(x)\cap V|=p_{(3,1),(2,2)}^{(1,2)}=\frac{k_{1,3}}{t}$, where  $V$ is   a partite set with $\Gamma_{3,1}(x)\cap V\ne \emptyset$.
\end{substep}
Pick a partite set $V$ such that $\Gamma_{3,1}(x)\cap V\ne \emptyset$. By Step \ref{t234xj2}, we get $p_{(1,3),(1,2)}^{(2,2)}>0$. In view of Lemma \ref{le216fj}, we get $|\Gamma_{1,2}(x)\cap V|=p_{(1,2),(2,2)}^{(3,1)}=\frac{k_{1,2}}{t}$. Note that there are exactly $t$ partite sets having nonempty intersection with $\Gamma_{3,1}(x)$. Since $V$ was arbitrary, there are exactly $t$ partite sets having nonempty intersection with $\Gamma_{1,2}(x)$. Since $p_{(1,3),(1,2)}^{(2,2)}>0$, from Lemma \ref{le216fj}, one obtains $|\Gamma_{3,1}(x)\cap V| =p_{(3,1),(2,2)}^{(1,2)}=\frac{k_{1,3}}{t}$. In view of \eqref{g234-4}, Lemma \ref{jb} \ref{jb-2} and Lemma \ref{main3''}, we have $p_{(1,3),(2,2)}^{(1,3)}=p_{(3,1),(2,2)}^{(3,1)}=\frac{k_{1,3}}{t}-1$.

\begin{substep}\label{t234fenxi}
 Show that $\Gamma_{2,2}\notin \Gamma_{1,3}\Gamma_{1,1}\cup \Gamma_{1,2}\Gamma_{1,1}\cup \Gamma_{1,2}^2$ if	$\Gamma_{2,2}\notin \Gamma_{1,3}^2$.
\end{substep}
Since $\Gamma_{2,2}\notin \Gamma_{1,3}^2$, from Lemma \ref{tf22}, we have $\Gamma_{2,2}\notin \Gamma_{1,3}\Gamma_{1,1}$. We only need to prove that $\Gamma_{2,2}\notin  \Gamma_{1,2}\Gamma_{1,1}\cup \Gamma_{1,2}^2$. By Step \ref{t234xj2}, one gets $p_{(1,2),(1,3)}^{(2,2)}\ne 0$. Let $(y,z)\in \Gamma_{2,2}$ and $x'\in P_{(1,2),(1,3)}(y,z)$.

 Suppose  $\Gamma_{2,2}\in  \Gamma_{1,2}^2$.  By Lemma \ref{jb} \ref{jb-2}, one obtains $p_{(2,2),(2,1)}^{(1,2)}\ne 0$, which implies that there exists a vertex $u\in P_{(2,2),(2,1)}(y,x')$. Since $p_{(1,2),(1,3)}^{(2,2)}\ne 0$, from Lemma \ref{jb} \ref{jb-2}, we have $p_{(3,1),(2,2)}^{(1,2)}\ne 0$, which implies that there exists a vertex $v\in P_{(3,1),(2,2)}(x',u)$. Since $(y,z), (y,u),(u,v)\in \Gamma_{2,2}$, from Lemma \ref{jichu0}, $v,z$ belong to the same partite set. Since $x'\in P_{(1,3),(1,3)}(v,z)$, from \eqref{g234-1}, one has $\Gamma_{2,2}\in  \Gamma_{1,3}^2$, a contradiction. Thus, $\Gamma_{2,2}\notin  \Gamma_{1,2}^2$. By Lemma \ref{tf22}, we have $\Gamma_{2,2}\notin \Gamma_{1,2}\Gamma_{1,1}$. The desired result holds.

\begin{substep}\label{t234xj6}
Show that	 $\Gamma_{2,2}\in \Gamma_{1,3}^2$.
\end{substep}

Suppose for the contrary that $\Gamma_{2,2}\notin \Gamma_{1,3}^2$. Step \ref{t234fenxi} implies $\Gamma_{2,2}\notin \Gamma_{1,3}\Gamma_{1,1}$. By \eqref{g234-1}, we get   $\Gamma_{1,3}^2,\Gamma_{1,3}\Gamma_{1,1}\subseteq \{ \Gamma_{1,2},\Gamma_{1,3}\}$.
 Pick a partite set $V$ such that $\Gamma_{3,1}(x)\cap V\ne \emptyset$. In view of Lemma \ref{le216fj} and Step \ref{t234xj2}, one obtains $|\Gamma_{1,2}(x)\cap V|=p_{(1,2),(2,2)}^{(3,1)}=\frac{k_{1,2}}{t}$. By Step \ref{G1213fj}, we have  $|\Gamma_{3,1}(x)\cap V|=p_{(3,1),(2,2)}^{(1,2)}=\frac{k_{1,3}}{t}$ and $p_{(1,3),(2,2)}^{(1,3)}=\frac{k_{1,3}}{t}-1$.

Since
$\Gamma_{1,3}^2,\Gamma_{1,3}\Gamma_{1,1}\subseteq \{ \Gamma_{1,2},\Gamma_{1,3}\}$, we get $\Gamma_{3,1}^2,\Gamma_{3,1}\Gamma_{1,1}\subseteq \{ \Gamma_{2,1},\Gamma_{3,1}\}$. Since $\Gamma_{3,1}(x)\cap V\ne \emptyset$,  from Lemma \ref{jiaojik}, one has  $\Gamma_{1,3}(x)\cap V=\Gamma_{1,1}(x)\cap V=\emptyset$.  In view of \eqref{g234-1''} and Step \ref{t234fenxi}, one gets  $\Gamma_{1,2}^2\subseteq \{\Gamma_{1,1},\Gamma_{1,2}, \Gamma_{2,1},\Gamma_{1,3}\}$. Since  $\Gamma_{1,2}(x)\cap V\ne \emptyset$, from Lemma \ref{jiaojik}, we obtain $\Gamma_{2,1}(x)\cap V=\emptyset$. Lemma \ref{pssize} implies $|V|=|\Gamma_{3,1}(x)\cap V|+|\Gamma_{1,2}(x)\cap V|=\frac{k_{1,2}+k_{1,3}}{t}$. By Lemma \ref{jichu0}, one has $|V|=1+\sum_{h,l>1}k_{h,l}$. In view of Lemmas \ref{changyong} and \ref{zuidaarc}, we obtain $|V|=\frac{k_{1,2}+k_{1,3}}{t}=k_{2,2}+1$.

Assume that there are exactly $s$ partite sets having nonempty intersection with $\Gamma_{1,1}(x)$. Pick a partite set $\tilde{V}$ such that $\Gamma_{1,1}(x)\cap\tilde{V}\ne \emptyset$. Since $\Gamma_{2,2}\notin \Gamma_{1,q}\Gamma_{1,1}$ for $q\in\{2,3\}$ from Step \ref{t234fenxi},  we get $\Gamma_{1,q}\Gamma_{1,1}\cup \Gamma_{q,1}\Gamma_{1,1}\subseteq\{\Gamma_{1,1},\Gamma_{1,2},\Gamma_{2,1},\Gamma_{1,3},\Gamma_{3,1}\}$ by Lemma \ref{changyong}. By Lemma \ref{jiaojik}, one has $\Gamma_{1,q}(x)\cap \tilde{V}=\Gamma_{q,1}(x)\cap \tilde{V}=\emptyset$ for all $q\in \{2,3\}$. Lemma \ref{pssize} implies  $\tilde{V}\subseteq \Gamma_{1,1}(x)$. Since $\tilde{V}$ was arbitrary, we have $|\tilde{V}|=\frac{k_{1,1}}{s}$. By  Lemma \ref{jichu}, one gets $|V|=|\tilde{V}|$, and so $\frac{k_{1,1}}{s}=\frac{k_{1,2}+k_{1,3}}{t}=k_{2,2}+1$.

Since  $\Gamma_{2,2}\notin \Gamma_{1,3}^2\cup \Gamma_{1,3}\Gamma_{1,1}$, from Lemma \ref{jb} \ref{jb-2}, one has $\Gamma_{3,1},\Gamma_{1,1}\notin \Gamma_{1,3}\Gamma_{2,2}$. By the first statement in Lemma \ref{jichu2}, we have  $\Gamma_{1,3}\Gamma_{2,2}\subseteq\{\Gamma_{1,2},\Gamma_{2,1},\Gamma_{1,3}\}$.
In view of Lemma \ref{jb}  \ref{jb-3}, we get
\begin{align}
\frac{k_{1,2}+k_{1,3}}{t}-1=k_{2,2}=p_{(1,3),(2,2)}^{(1,3)}+p_{(1,2),(2,2)}^{(1,3)}+p_{(2,1),(2,2)}^{(1,3)}.\nonumber
\end{align}
By substituting $p_{(1,3),(2,2)}^{(1,3)}=\frac{k_{1,3}}{t}-1$ and  $p_{(2,1),(2,2)}^{(1,3)}=p_{(1,2),(2,2)}^{(3,1)}=\frac{k_{1,2}}{t}$ into the above equation,  one obtains  $p_{(1,2),(2,2)}^{(1,3)}=0$. By Lemma \ref{jb} \ref{jb-2}, we get $\Gamma_{1,3}\Gamma_{2,2}\subseteq\{\Gamma_{2,1},\Gamma_{1,3}\}$.



By setting $i=j=(1,3)$, $l=(1,a)$ and $m=(2,2)$ for  $a\in \{2,3\}$ in Lemma \ref{jb} \ref{jb-4}, one obtains
$$p_{(1,3),(1,a)}^{(1,3)}p_{(1,3),(2,2)}^{(1,3)}+p_{(1,3),(1,a)}^{(2,1)}p_{(2,1),(2,2)}^{(1,3)}=p_{(1,3),(2,2)}^{(2,1)}p_{(2,1),(1,a)}^{(1,3)}+p_{(1,3),(2,2)}^{(1,3)}p_{(1,3),(1,a)}^{(1,3)}$$
from Lemma \ref{jb} \ref{jb-2}. Since $p_{(1,3),(1,a)}^{(2,1)}=0$, we get
$p_{(1,3),(2,2)}^{(2,1)}p_{(2,1),(1,a)}^{(1,3)}=0$. Since $p_{(1,3),(2,2)}^{(2,1)}=p_{(3,1),(2,2)}^{(1,2)}\ne 0$, we have $p_{(2,1),(1,a)}^{(1,3)}=0$. By Lemma \ref{jb} \ref{jb-2}, one gets $p_{(1,2),(1,3)}^{(1,a)}=0$ for  $a\in \{2,3\}$. In view of  \eqref{g234-1}, we have  $\Gamma_{1,2}\Gamma_{1,3}=\{ \Gamma_{2,2}\}$. By Lemma \ref{jb} \ref{jb-2} and \ref{jb-3}, we have $k_{1,2}=p_{(2,1),(2,2)}^{(1,3)}=\frac{k_{1,2}}{t}$, and so $t=1$. Then
$\frac{k_{1,1}}{s}=k_{1,2}+k_{1,3}$.

Suppose that $\Gamma_{1,3}\in \Gamma_{1,3}\Gamma_{1,1}$. Let $(y,w)\in \Gamma_{1,3}$ and $x'\in P_{(1,3),(1,1)}(y,w)$. Since $p_{(2,1),(2,2)}^{(1,3)}\ne 0$,  there exists a vertex $z\in P_{(2,1),(2,2)}(y,x')$. It follows that $z\notin N^+(x')\cup N^-(x')$. By Lemma \ref{jichu2}, one gets $(z,w)$ or $(w,z)\in A\Gamma$. Since $y\in P_{(1,2),(1,3)}(z,w)$ and $\Gamma_{1,2}\Gamma_{1,3}=\{\Gamma_{2,2}\}$, we have $(z,w)\in \Gamma_{2,2}$, a contradiction. Thus, $\Gamma_{1,3}\notin \Gamma_{1,3}\Gamma_{1,1}$, and so $\Gamma_{1,3}\Gamma_{1,1}=\{ \Gamma_{1,2}\}$.

In view of Lemma \ref{jb} \ref{jb-2} and \ref{jb-3}, one obtains $k_{1,1}=p_{(1,1),(1,2)}^{(1,3)}$. By \eqref{g234-3} and  Lemma \ref{jb} \ref{jb-3}, one obtains
\begin{align}
k_{1,2}&=p_{(1,2),(1,1)}^{(1,3)}+p_{(1,2),(1,2)}^{(1,3)}+p_{(1,2),(2,1)}^{(1,3)}+p_{(1,2),(1,3)}^{(1,3)}+p_{(1,2),(3,1)}^{(1,3)}+p_{(1,2),(2,2)}^{(1,3)} \geq k_{1,1}.\nonumber
\end{align}
Since $\frac{k_{1,1}}{s}=k_{1,2}+k_{1,3}$, we have $\frac{k_{1,1}}{s}\geq  k_{1,1}+k_{1,3}>k_{1,1}$, a contradiction.

\begin{substep}\label{g234jiegou}
	 Show that  $p_{(1,2),(2,2)}^{(1,3)}=\frac{k_{1,2}}{t}$.
\end{substep}
 Pick a partite set $V$ such that $\Gamma_{3,1}(x)\cap V\ne \emptyset$.  By Step \ref{G1213fj}, one has  $|\Gamma_{3,1}(x)\cap V|=p_{(3,1),(2,2)}^{(1,2)}=\frac{k_{1,3}}{t}$ and  $p_{(1,3),(2,2)}^{(1,3)}=\frac{k_{1,3}}{t}-1$.

 By Steps \ref{t234xj2} and  \ref{t234xj6}, one has $p_{(1,2),(1,3)}^{(2,2)}>0$ and $p_{(1,3),(1,3)}^{(2,2)}>0$. Lemma \ref{le216fj} implies $|\Gamma_{1,2}(x)\cap V|=p_{(1,2),(2,2)}^{(3,1)}=\frac{k_{1,2}}{t}$ and $|\Gamma_{1,3}(x)\cap V|=p_{(1,3),(2,2)}^{(3,1)}=\frac{k_{1,3}}{t}$.
 Since $V$ was arbitrary,  there are exactly $t$ partite sets having nonempty intersection with $\Gamma_{1,3}(x)$.  In view of Lemma \ref{le216fj} and Step \ref{t234xj2}, one has  $|\Gamma_{2,1}(x)\cap V| =p_{(2,1),(2,2)}^{(1,3)}=\frac{k_{1,2}}{t}$. By Lemma \ref{pssize}, we obtain
 \begin{align}
 |V|=\frac{2(k_{1,2}+k_{1,3})}{t}+|\Gamma_{1,1}(x)\cap V|.\label{step-3}\tag{5.10}
 \end{align}

We claim that $|V|=\frac{2(k_{1,2}+k_{1,3})}{t}+p_{(1,1),(2,2)}^{(3,1)}$. Suppose $\Gamma_{2,2}\notin \Gamma_{1,3}\Gamma_{1,1}$. Lemma \ref{jb} \ref{jb-2} implies $p_{(1,1),(2,2)}^{(3,1)}=0$. By Lemma \ref{changyong}, one gets $\Gamma_{1,3}\Gamma_{1,1}\subseteq\{\Gamma_{1,2},\Gamma_{1,3}\}$.
Since $\Gamma_{1,3}(x)\cap V\ne \emptyset$, from Lemma \ref{jiaojik}, we have $\Gamma_{1,1}(x)\cap V=\emptyset$. \eqref{step-3} implies  that the claim is valid. Suppose $\Gamma_{2,2}\in \Gamma_{1,3}\Gamma_{1,1}$. By Lemma \ref{le216fj}, we get
 $|\Gamma_{1,1}(x)\cap V|=p_{(1,1),(2,2)}^{(3,1)}=\frac{k_{1,1}}{t}$.  Combining with \eqref{step-3}, our claim is valid.

 By Lemma \ref{jichu0}, one has $|V|=1+\sum_{h,l>1}k_{h,l}$. In view of Lemmas \ref{changyong} and \ref{zuidaarc}, we obtain $|V|=k_{2,2}+1$. By the  claim, one has
 \begin{align}
 k_{2,2}=\frac{2(k_{1,2}+k_{1,3})}{t}+p_{(1,1),(2,2)}^{(3,1)}-1.\label{step6-1}\tag{5.11}
 \end{align}
  By the first statement in Lemma \ref{jichu2}, we have  $\Gamma_{1,3}\Gamma_{2,2}\subseteq\{\Gamma_{1,1},\Gamma_{1,2},\Gamma_{2,1},\Gamma_{1,3},\Gamma_{3,1}\}$.
In view of Lemma \ref{jb} \ref{jb-2} and  \ref{jb-3}, one has
\begin{align}
k_{2,2}&=p_{(1,1),(2,2)}^{(1,3)}+p_{(1,2),(2,2)}^{(1,3)}+p_{(2,1),(2,2)}^{(1,3)}+p_{(1,3),(2,2)}^{(1,3)}+p_{(3,1),(2,2)}^{(1,3)}.\nonumber
\end{align}
By substituting $p_{(1,3),(2,2)}^{(1,3)}=\frac{k_{1,3}}{t}-1$, $p_{(2,1),(2,2)}^{(1,3)}=p_{(1,2),(2,2)}^{(3,1)}=\frac{k_{1,2}}{t}$ and  $p_{(3,1),(2,2)}^{(1,3)}=p_{(1,3),(2,2)}^{(3,1)}=\frac{k_{1,3}}{t}$ into the above equation,  we obtain
\begin{align}
k_{2,2}=p_{(1,1),(2,2)}^{(1,3)}+p_{(1,2),(2,2)}^{(1,3)}+\frac{k_{1,2}+2k_{1,3}}{t}-1.\label{step6-2}\tag{5.12}
\end{align}
By \eqref{step6-1} and \eqref{step6-2}, we get $p_{(1,2),(2,2)}^{(1,3)}=\frac{k_{1,2}}{t}$.


\begin{substep}
We reach a contradiction.
\end{substep}

 By \eqref{g234-3} and  Lemma \ref{jb}  \ref{jb-2},  \ref{jb-3}, we have $$k_{1,2}=p_{(1,2),(1,1)}^{(1,3)}+p_{(1,2),(1,2)}^{(1,3)}+p_{(1,2),(2,1)}^{(1,3)}+p_{(1,2),(1,3)}^{(1,3)}+p_{(1,2),(3,1)}^{(1,3)}+p_{(1,2),(2,2)}^{(1,3)}.$$
By Step \ref{g234jiegou}, one has $p_{(1,2),(2,2)}^{(1,3)}=\frac{k_{1,2}}{t}$, which implies
\begin{align}
(1-\frac{1}{t})k_{1,2}=p_{(1,2),(1,1)}^{(1,3)}+p_{(1,2),(1,2)}^{(1,3)}+p_{(1,2),(2,1)}^{(1,3)}+p_{(1,2),(1,3)}^{(1,3)}+p_{(1,2),(3,1)}^{(1,3)}.\label{key-1-3}\tag{5.13}
\end{align}
In view of \eqref{g234-1} and Lemma \ref{jb} \ref{jb-2}, \ref{jb-3},
$k_{1,2}=p_{(2,1),(1,2)}^{(1,3)}+p_{(2,1),(1,3)}^{(1,3)}+p_{(2,1),(2,2)}^{(1,3)}.$
  Lemma \ref{le216fj} and Step \ref{t234xj2}  imply $p_{(2,1),(2,2)}^{(1,3)}=p_{(1,2),(2,2)}^{(3,1)}=\frac{k_{1,2}}{t}$.  It follows  that
\begin{align}
(1-\frac{1}{t})k_{1,2}=p_{(2,1),(1,2)}^{(1,3)}+p_{(2,1),(1,3)}^{(1,3)}=p_{(1,2),(2,1)}^{(1,3)}+p_{(1,2),(1,3)}^{(1,3)}.\label{key-2}\tag{5.14}
\end{align}	
In view of \eqref{key-1-3} and \eqref{key-2}, one has $p_{(1,2),(1,1)}^{(1,3)}=p_{(1,2),(3,1)}^{(1,3)}=0$. By Lemma \ref{jb} \ref{jb-2} and  \eqref{g234-1}, we get $\Gamma_{1,3}\Gamma_{1,1},\Gamma_{1,3}^2\subseteq\{ \Gamma_{1,3},\Gamma_{2,2}\}$.

By \eqref{g234-4} and Lemma \ref{jb}  \ref{jb-2}, \ref{jb-3}, we get
$$k_{1,3}=p_{(1,3),(1,1)}^{(1,3)}+p_{(1,3),(1,2)}^{(1,3)}+p_{(1,3),(2,1)}^{(1,3)}+p_{(1,3),(1,3)}^{(1,3)}+p_{(1,3),(3,1)}^{(1,3)}+p_{(1,3),(2,2)}^{(1,3)}+1.$$
In view of Step \ref{G1213fj}, one has
$p_{(1,3),(2,2)}^{(1,3)}=\frac{k_{1,3}}{t}-1$, which implies
\begin{align}
(1-\frac{1}{t})k_{1,3}=p_{(1,3),(1,1)}^{(1,3)}+p_{(1,3),(1,2)}^{(1,3)}+p_{(1,3),(2,1)}^{(1,3)}+p_{(1,3),(1,3)}^{(1,3)}+p_{(1,3),(3,1)}^{(1,3)}.\label{key-1-1}\tag{5.15}
\end{align}
Since $\Gamma_{1,3}^2\subseteq\{ \Gamma_{1,3},\Gamma_{2,2}\}$, from   Lemma \ref{jb} \ref{jb-2} and \ref{jb-3}, we have
$k_{1,3}=p_{(3,1),(1,3)}^{(1,3)}+p_{(3,1),(2,2)}^{(1,3)}.$
By Lemma \ref{le216fj} and Step \ref{t234xj6}, one has $p_{(3,1),(2,2)}^{(1,3)}=p_{(1,3),(2,2)}^{(3,1)}=\frac{k_{1,3}}{t}$, which implies
\begin{align}
(1-\frac{1}{t})k_{1,3}=p_{(3,1),(1,3)}^{(1,3)}=p_{(1,3),(1,3)}^{(1,3)}.\label{key-1-2}\tag{5.16}
\end{align}
By \eqref{key-1-1} and \eqref{key-1-2}, one obtains   $p_{(1,3),(1,1)}^{(1,3)}=0$ and $p_{(1,3),(1,3)}^{(1,3)}=p_{(1,3),(3,1)}^{(1,3)}=0$, which imply   $\Gamma_{1,3}\Gamma_{1,1}=\{\Gamma_{2,2}\}$ and $t=1$.

Pick a partite set $V$ such that $\Gamma_{3,1}(x)\cap V\ne \emptyset$. In view of Lemma \ref{le216fj}, we obtain $|\Gamma_{1,1}(x)\cap V| =k_{1,1}$.
  Lemma \ref{le216fj} and Steps \ref{t234xj2}, \ref{t234xj6} imply $|\Gamma_{1,2}(x)\cap V| =k_{1,2}$ and $|\Gamma_{1,3}(x)\cap V| =k_{1,3}$. By Step \ref{G1213fj}, one has $|\Gamma_{3,1}(x)\cap V| =k_{1,3}$. Then $\Gamma_{1,1}(x)\cup \Gamma_{1,2}(x)\cup \Gamma_{1,3}(x)\cup \Gamma_{3,1}(x)\subseteq V$. It follows that $2k_{1,3}+k_{1,2}+k_{1,1}\leq |V|$. Since $3\in T$, there exists a circuit of length $3$ containing an arc of type $(1,2)$. By Lemma \ref{jichu0},  $\Gamma$ has at least three distinct partite sets. Let $V'$ be a partite set with $x\notin V'$ and $V'\ne V$. Since $\Gamma_{1,1}(x)\cup \Gamma_{1,2}(x)\cup \Gamma_{1,3}(x)\cup \Gamma_{3,1}(x)\subseteq V$, from Lemma \ref{jichu0}, one has $V'\subseteq \Gamma_{2,1}(x)$. Then $|V'|\leq k_{1,2}$. In view of Lemma \ref{jichu}, we get $2k_{1,3}+k_{1,2}+k_{1,1}\leq |V|=|V'|\leq k_{1,2}$, a contradiction.

\section{$|T|=1$}
In  this section, $\Gamma$ always denotes  a semicomplete multipartite commutative weakly distance-regular digraph with $|T|=1$. By Lemma \ref{jichu},
the underlying graph of $\Gamma$ is isomorphic to $K^{k}_{m}$ with $m\geq2$ and $k\geq 2$.


		


\begin{prop}\label{Tjishu1}
	  The digraph $\Gamma$ is  isomorphic to  one of the digraphs in Theorem \ref{xushumain4} \ref{main1-fu7} and \ref{main1-fu5}.
\end{prop}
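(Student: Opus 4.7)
The plan is to pin down $\tilde{\partial}(\Gamma)$ exhaustively and then invoke Theorem \ref{main thm} together with Lemma \ref{tdxw1}.

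\emph{Reducing $\tilde{\partial}(\Gamma)$.} By Proposition \ref{g234mprop}, either $T=\{3\}$ or $T=\{4\}$, so no arc has type $(1,1)$ and hence $E\Gamma=\emptyset$ and $\Gamma$ is a team tournament of girth $q\in\{3,4\}$. Lemma \ref{changyong} kills $(2,s)\in\tilde{\partial}(\Gamma)$ for $s\geq 3$. If some same-part pair $(x,y)$ satisfied $\tilde{\partial}(x,y)=(r,s)$ with $3\leq r<s$, then Lemma \ref{jichu2} applied in both directions would make $x,y$ twins (since $\partial(x,y)\geq 3$ and then the impossibility of type $(1,1)$ arcs forces $\partial(y,x)\geq 3$ too); splicing a shortest $x\to y$ path with an in-arc $u_{r-1}\to y$ obtained from $N^-(x)=N^-(y)$ would produce a $y\to x$ walk of length $r$, contradicting $\partial(y,x)=s$. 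Hence every same-part two-way distance is diagonal. A parallel twin-splicing argument rules out two values $3\leq r_1<r_2$ with $(r_i,r_i)\in\tilde{\partial}(\Gamma)$: picking $x,y,z$ in one partite set realizing them, the common neighborhood equalities $N^{\pm}(x)=N^{\pm}(y)=N^{\pm}(z)$ propagate $\tilde{\partial}(y,z)=(r_2,r_1)$, contradicting diagonal-only. Thus $\tilde{\partial}(\Gamma)\subseteq\{(0,0),(1,q-1),(q-1,1),(2,2),(s,s)\}$ for some $s\geq 3$, and at least one of $(2,2),(s,s)$ occurs since $k\geq 2$.

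\emph{Case A: $\tilde{\partial}(\Gamma)=\{(0,0),(1,q-1),(q-1,1),(2,2)\}$.} The attached scheme is $3$-class with $R_1=\Gamma_{1,q-1}$, $R_2=R_1^\top$, $R_3=\Gamma_{2,2}$. Because $A_0=0$ and $J-I-A_1-A_1^\top=A_{(2,2)}$, the expansion of $A_1^2$ coming from the scheme automatically satisfies Definition \ref{main defin} with $t=0$, so $\Gamma$ is doubly regular and Theorem \ref{main thm} leaves Types I, II, III. Type I would realize $\Gamma$ as $\Delta\circ\overline{K}_r$ with $\Delta$ semicomplete; since $E\Gamma=\emptyset$, $\Delta$ is a tournament, forcing the within-part two-way distance in $\Gamma$ to equal the girth of $\Delta$, which is at least $3$ and contradicts $(2,2)$ being the only same-part class. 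Type III parts \textrm{(i)} and \textrm{(iii)} require $E\Gamma\neq\emptyset$ and are excluded. For Type III \textrm{(ii)}, choose $(x',y)\in R_1$ with $x'\in V_i'$, $y\in V_j$; the property $V_i'\times V_j\subseteq A\vec{\Gamma}$ gives $R_1(x')\cap V_j=V_j$, yielding $p_{(1,q-1),(2,2)}^{(1,q-1)}=r-1$. By commutativity this equals $p_{(2,2),(1,q-1)}^{(1,q-1)}$, which counts $z\in V_i\setminus\{x'\}$ with $(z,y)\in R_1$; the Type III \textrm{(ii)} structure (with $V_j\times V_i''\subseteq A\vec{\Gamma}$) makes this count $|V_i'|-1$. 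Forcing $|V_i'|=r$ contradicts $V_i''\neq\emptyset$. Hence $\Gamma$ is Type II, placing it in Theorem \ref{xushumain4} \ref{main1-fu5} with $n=1$.

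\emph{Case B: $(s,s)\in\tilde{\partial}(\Gamma)$ for some $s\geq 3$.} Lemma \ref{tdxw1} gives $\Gamma\simeq\Sigma\circ\overline{K}_{k_{s,s}+1}$ with $\Sigma$ a semicomplete multipartite commutative weakly distance-regular digraph and $\tilde{\partial}(\Sigma)=\tilde{\partial}(\Gamma)\setminus\{(s,s)\}$. If $\tilde{\partial}(\Sigma)=\{(0,0),(1,q-1),(q-1,1)\}$, then $\Sigma$ is semicomplete; a strongly connected semicomplete digraph has girth at most $3$, so $q=3$ and $\Gamma$ falls under Theorem \ref{xushumain4} \ref{main1-fu7}. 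Otherwise $\tilde{\partial}(\Sigma)=\{(0,0),(1,q-1),(q-1,1),(2,2)\}$, so $\Sigma$ is handled by Case A and turns out to be a doubly regular Type II team tournament; then $\Gamma$ is a coclique extension of such a tournament, matching Theorem \ref{xushumain4} \ref{main1-fu5} with $n=k_{s,s}+1\geq 2$. The main obstacle is the intersection-number computation eliminating Type III \textrm{(ii)} in Case A; once $\tilde{\partial}(\Gamma)$ is pinned down, the remaining type exclusions and the Case B reduction are essentially mechanical.
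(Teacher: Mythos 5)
Your overall route (pin down $\tilde{\partial}(\Gamma)$, quotient out the large diagonal class via Lemma \ref{tdxw1}, and identify the base digraph as a doubly regular Type \Rmnum{2} team tournament) is essentially the paper's, and your eliminations of Types \Rmnum{1} and \Rmnum{3} via Theorem \ref{main thm} are a reasonable substitute for the paper's appeal to \cite[Proposition 5.3]{JG14} plus an intersection-number computation. However, there is a genuine gap in your reduction of $\tilde{\partial}(\Gamma)$: the ``parallel twin-splicing argument'' that rules out two values $3\leq r_1<r_2$ with $(r_i,r_i)\in\tilde{\partial}(\Gamma)$ does not work. If $\tilde{\partial}(x,y)=(r_1,r_1)$ and $\tilde{\partial}(x,z)=(r_2,r_2)$ with $x,y,z$ pairwise twins, then $N^{+}(y)=N^{+}(x)$ gives $\partial(y,z)=\partial(x,z)=r_2$ and $N^{-}(y)=N^{-}(x)$ gives $\partial(z,y)=\partial(z,x)=r_2$, so $\tilde{\partial}(y,z)=(r_2,r_2)$, not $(r_2,r_1)$; this is again diagonal and yields no contradiction. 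The fact you need is true but requires a different argument: when $T=\{4\}$ every arc has type $(1,3)$, and if $(3,s)\in\tilde{\partial}(\Gamma)$ with $s\geq 2$, taking a shortest path $(x,x_1,x_2,y)$ and using $x_1\in N^{+}(x)=N^{+}(y)$ produces the $3$-circuit $(y,x_1,x_2)$, whence $\partial(x_1,y)\leq 2$, contradicting the arc type $(1,3)$ of $(y,x_1)$. This is exactly the paper's opening claim, and without it you cannot exclude $(3,3)$ and $(4,4)$ coexisting when $T=\{4\}$, in which case Lemma \ref{tdxw1} (whose hypothesis allows only one class $(s,s)$ with $s\geq 3$ besides $(2,2)$) is not applicable and your Case B collapses.

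A secondary, smaller omission: Theorem \ref{xushumain4} \ref{main1-fu5} asserts membership in the family of Type \Rmnum{2} tournaments \emph{with the specific parameters} $(\frac{(k-2)l}{4},\frac{(k-2)l}{4},\frac{l^2(k-1)}{4(l-1)})$, so concluding ``$\Gamma$ is Type \Rmnum{2}'' in Case A is not quite enough; the paper closes this by citing \cite[Theorem 4.4]{JG14}, and you should do the same (this is also where the paper extracts $\Sigma\simeq C_4$ in the $T=\{4\}$ subcase). With the circuit argument substituted for the faulty twin-splicing step and the parameter verification added, your proof would be complete.
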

\begin{proof}
By Proposition \ref{g234mprop}, we have $T=\{q+1\}$ with $q\in \{2,3\}$. We claim that $s=1$ if $(1,3),(3,s)\in \tilde{\partial}(\Gamma)$ with $s\geq 1$. Suppose $s\geq 2$. Let $(x,y)\in \Gamma_{3,s}$. Then $y\notin N^+(x)\cup N^-(x)$. Pick   a shortest path $(x,x_1,x_2,y)$.
Since $\partial(x,y)=3$, from Lemma \ref{jichu2}, one has $x_1\in N^+(x)=N^+(y)$, which implies that $(y,x_1,x_2)$ is a circuit consisting of arcs of type $(1,3)$, a contradiction. Thus, our claim is valid.
Combining with Lemma \ref{changyong}, we have
	\begin{align}
	\Gamma_{1,q}^2&\subseteq\{\Gamma_{1,q},\Gamma_{2,q-1},\Gamma_{2,2}\},\label{t3-1}\tag{6.1}\\ \Gamma_{1,q}\Gamma_{q,1}&\subseteq \{\Gamma_{0,0}, \Gamma_{1,q},\Gamma_{q,1},\Gamma_{2,2},\Gamma_{q+1,q+1}\}.\label{t3-2}\tag{6.2}
\end{align}
It follows from Lemma \ref{sxjl} that
	\begin{align}
\tilde{\partial}(\Gamma)\subseteq\{(0,0),(1,q),(q,1),(2,2),(q+1,q+1)\}.\label{t1sxjl}\tag{6.3}
\end{align}

		\textbf{Case 1.} $(2,2)\notin \tilde{\partial}(\Gamma)$.
	
 In view of Lemma \ref{one} and \eqref{t3-1}, one has  $T=\{3\}$.	Since the underlying graph of $\Gamma$ is isomorphic to $K^{k}_{m}$, from  \eqref{t1sxjl}, we get $\tilde{\partial}(\Gamma)=\{(0,0),(1,2),(2,1),(3,3)\}$ with  $k_{1,2}=\frac{(k-1)m}{2}$ and $k_{3,3}=m-1$.
	Let $F=\{\Gamma_{0,0},\Gamma_{3,3}\}$. By Lemma \ref{changyong2}, 	$F$ is closed. Denote $\Sigma:=\Gamma/F$.
	By Lemma \ref{tdxw1},  $\Gamma$ is isomorphic to
	$\Sigma\circ \overline{{K}}_{m}$, where $\Sigma$ is a semicomplete weakly distance-regular digraph with $\tilde{\partial}(\Sigma) =\{(0,0),(1,2),(2,1)\}$. Thus, $\Gamma$ is isomorphic to one of
	the digraphs in Theorem \ref{xushumain4}  \ref{main1-fu7}.
	
		\textbf{Case 2.} $(2,2)\in \tilde{\partial}(\Gamma)$.
	
	Let $t=0$ if $(q+1,q+1)\notin \tilde{\partial}(\Gamma)$, and $t=q+1$ if $(q+1,q+1)\in \tilde{\partial}(\Gamma)$. By \eqref{t1sxjl}, one has $\tilde{\partial}(\Gamma)=\{(0,0),(1,q),(q,1),(2,2),(t,t)\}$.
	Let  $F=\{\Gamma_{0,0},\Gamma_{t,t}\}$. By Lemma \ref{changyong2}, 	$F$ is closed. Denote  $\Sigma:=\Gamma/F$. By Lemma \ref{tdxw1},
	$\Gamma$ is isomorphic to $\Sigma\circ \overline{{K}}_{k_{q+1,q+1}+1}$ and $\Sigma$ is a commutative weakly distance-regular digraph with
	\begin{align}
\tilde{\partial}(\Sigma)=\{(0,0),(1,q),(q,1),(2,2)\}.\label{3-2'}\tag{6.4}
	\end{align}
	 To distinguish the notations between $\Gamma$ and $\Sigma$, let $\overline{p}_{\wz{i},\wz{j}}^{\wz{h}}, \overline{k}_{\wz{i}}$ denote the symbols belonging to $\Sigma$ with $\wz{i},\wz{j},\wz{h}\in\wz{\partial}(\Sigma)$ and $x,y\in V\Sigma$. Since the underlying graph of $\Gamma$ is isomorphic to $K^{k}_{m}$,  the underlying graph of  $\Sigma$ is isomorphic to $K^{k}_{m'}$ with  $m=m'(k_{q+1,q+1}+1)$. \eqref{3-2'} implies $\overline{k}_{1,q}=\frac{(k-1)m'}{2}$ and $\overline{k}_{2,2}=m'-1$.
	
	Since $(V\Sigma,\{\Sigma_{0,0},\Sigma_{1,q},\Sigma_{q,1},\Sigma_{2,2}\})$ is a 3-class non-symmetric association scheme,  from \cite[Proposition 5.3]{JG14}, $\Sigma$ is a doubly regular $(k,m')$-team tournament with parameters $(\overline{p}_{(1,q),(1,q)}^{(1,q)},\overline{p}_{(1,q),(1,q)}^{(q,1)},\overline{p}_{(1,q),(1,q)}^{(2,2)})$ of Type \Rmnum{1} or \Rmnum{2}.
	
	Suppose that $\Sigma$ has Type \Rmnum{1}. Then $\overline{p}_{(1,q),(1,q)}^{(q,1)}-\overline{p}_{(1,q),(1,q)}^{(1,q)}=m'$.  By \eqref{t3-1}, we have  $\overline{p}_{(1,3),(1,3)}^{(3,1)}=0$, which implies that $q=2$.  In view of  \eqref{3-2'} and Lemma \ref{jb} \ref{jb-2}, \ref{jb-3}, one obtains
	\begin{align}
	\overline{k}_{1,2}&=\overline{p}_{(1,2),(2,1)}^{(1,2)}+\overline{p}_{(2,1),(2,1)}^{(1,2)}+\overline{p}_{(2,2),(2,1)}^{(1,2)}=\overline{p}_{(1,2),(1,2)}^{(1,2)}+\overline{p}_{(1,2),(1,2)}^{(2,1)}+\overline{p}_{(2,2),(2,1)}^{(1,2)}.\label{3-1}\tag{6.5}\\ \overline{k}_{1,2}-1&=\overline{p}_{(1,2),(1,2)}^{(1,2)}+\overline{p}_{(2,1),(1,2)}^{(1,2)}+\overline{p}_{(2,2),(1,2)}^{(1,2)}=2\overline{p}_{(1,2),(1,2)}^{(1,2)}+\overline{p}_{(2,2),(1,2)}^{(1,2)}.\label{3-2}\tag{6.6}
	\end{align}
	Since $\Sigma_{1,2}\notin\Sigma_{2,2}^2$ from Lemma \ref{jichu3}, by Lemma \ref{jb} \ref{jb-3}, we have $m'-1=\overline{k}_{2,2}=\overline{p}_{(2,2),(1,2)}^{(1,2)}+\overline{p}_{(2,2),(2,1)}^{(1,2)}$. \eqref{3-1} and \eqref{3-2} imply $$2\overline{k}_{1,2}-1=3\overline{p}_{(1,2),(1,2)}^{(1,2)}+\overline{p}_{(1,2),(1,2)}^{(2,1)}+m'-1.$$ By substituting $\overline{k}_{1,2}=\frac{(k-1)m'}{2}$ and $\overline{p}_{(1,2),(1,2)}^{(2,1)}=\overline{p}_{(1,2),(1,2)}^{(1,2)}+m'$ into the above equation, one has $\overline{p}_{(1,2),(1,2)}^{(1,2)}=\frac{(k-3)m'}{4}$, and so $\overline{p}_{(1,2),(1,2)}^{(2,1)}=\frac{(k+1)m'}{4}$. \eqref{3-1} implies $\overline{p}_{(2,2),(2,1)}^{(1,2)}=0$. By Lemma \ref{jb} \ref{jb-2}, we get $\overline{p}_{(1,2),(1,2)}^{(2,2)}=0$. Since $\Sigma$ is a doubly regular $(k,m')$-team tournament and $q=2$, we have $\overline{p}_{(1,2),(1,2)}^{(2,2)}\ne0$, a contradiction. By \cite[Theorem 4.4]{JG14}, $\Sigma$ is a doubly regular $(k,m')$-team tournament of Type \Rmnum{2} with parameters $(\frac{(k-2)m'}{4},\frac{(k-2)m'}{4},\frac{(k-1)m'^2}{4(m'-1)})$.
	
	Suppose $T=\{3\}$. If $\Gamma_{3,3}\notin \Gamma_{1,2}\Gamma_{2,1}$, then $t=0$ and $m'=m$, which imply that $\Gamma$ is  isomorphic to  the digraph in Theorem \ref{xushumain4} \ref{main1-fu5} with $n=1$. If $\Gamma_{3,3}\in \Gamma_{1,2}\Gamma_{2,1}$, then $t=3$ and $m=m'(k_{3,3}+1)$, which imply that  $\Gamma$ is isomorphic to one of the digraphs in Theorem \ref{xushumain4}  \ref{main1-fu5} with $n\geq 2$.

	Suppose $T=\{4\}$. By \cite[Theorem 4.4]{JG14}, one has $\overline{p}_{(1,3),(1,3)}^{(1,3)}=\overline{p}_{(1,3),(1,3)}^{(3,1)}=\frac{(k-2)m'}{4}=0$ and $m'-1\mid k-1$, which imply that $k=2$ and $m'=2$. It follows that   $\overline{k}_{1,3}=\frac{(k-1)m'}{2}=1$. Then $\Sigma$ is isomorphic to $C_4$.
	If $\Gamma_{4,4}\notin \Gamma_{1,3}\Gamma_{3,1}$, then $t=0$ and $m'=m$, which imply that $\Gamma$ is isomorphic to  the digraph in Theorem \ref{xushumain4} \ref{main1-fu5} with $n=1$ and $k=l=2$.	
	If $\Gamma_{4,4}\in \Gamma_{1,3}\Gamma_{3,1}$, then $t=3$ and  $m=m'(k_{3,3}+1)$, which imply that
	$\Gamma$ is isomorphic to one of the digraphs in Theorem \ref{xushumain4} \ref{main1-fu5} with $n\geq 2$ and $k=l=2$.
\end{proof}


\section{$T=\{3,4\}$}
In  this section, $\Gamma$ always denotes  a semicomplete multipartite commutative weakly distance-regular digraph with $T=\{3,4\}$. We shall prove that $\Gamma$ is isomorphic to one of the digraphs in Theorem \ref{xushumain4} \ref{main1-fu2} and \ref{main1-fu0}.

\begin{lemma}\label{changyong3}
	We have $\tilde{\partial}(\Gamma)\subseteq\{(0,0),(1,2),(2,1),(1,3),(3,1),(2,2),(3,3)\}$.
\end{lemma}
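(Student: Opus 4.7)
The plan is to classify $\tilde{\partial}(x,y)$ by splitting on the value of $\min(\partial(x,y),\partial(y,x))$ and using what we already know about $T=\{3,4\}$. The case $x=y$ contributes $(0,0)$. When the minimum is $1$, $\tilde{\partial}(x,y)$ encodes an arc type $(1,r-1)$ or $(r-1,1)$, and by the definition of $T$ we must have $r\in\{3,4\}$, which contributes $\{(1,2),(2,1),(1,3),(3,1)\}$. When the minimum is $2$, Lemma \ref{changyong} bounds the other coordinate by $2$; using that $(a,b)\in\tilde{\partial}(\Gamma)$ iff $(b,a)\in\tilde{\partial}(\Gamma)$ covers both $(2,\cdot)$ and $(\cdot,2)$, leaving only $(2,2)$. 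It therefore suffices to handle pairs $(s,t)$ with $s,t\geq 3$ and show that the only one possible is $(3,3)$.

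For the remaining case I plan to use the first statement of Lemma \ref{jichu2} together with the existence of arcs of type $(1,2)$ (and, by symmetry, $(2,1)$). Fix $(x,z)\in\Gamma_{s,t}$ with $s,t\geq 3$. Since $s,t\geq 2$, we have $z\notin N^{+}(x)\cup N^{-}(x)$, so Lemma \ref{jichu2} gives that $w\in N^{+}(x)\cup N^{-}(x)$ iff $w\in N^{+}(z)\cup N^{-}(z)$ for every $w$. Because $3\in T$, the relations $\Gamma_{1,2}$ and $\Gamma_{2,1}$ are nonempty, so we may pick $w\in\Gamma_{1,2}(x)$ and $w'\in\Gamma_{2,1}(x)$. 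For $w$: we have $w\in N^{+}(x)$, so $w\in N^{+}(z)\cup N^{-}(z)$; the case $w\in N^{-}(z)$ gives $\partial(x,z)\leq\partial(x,w)+\partial(w,z)=1+1=2$, contradicting $s\geq 3$, so $w\in N^{+}(z)$, and then $\partial(z,x)\leq\partial(z,w)+\partial(w,x)=1+2=3$, forcing $t\leq 3$. Symmetrically $w'\in N^{-}(x)$, the case $w'\in N^{+}(z)$ gives $\partial(z,x)\leq 2$, contradicting $t\geq 3$, so $w'\in N^{-}(z)$ and $\partial(x,z)\leq 2+1=3$, forcing $s\leq 3$. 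Hence $(s,t)=(3,3)$.

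The main obstacle is recognizing that Lemma \ref{zuidaarc}, which is the natural tool for excluding large $(s,t)$ pairs, requires $2\in T$ in order to supply a vertex of $\Gamma_{1,1}(x)$ as a bridge between $x$ and $z$; with $T=\{3,4\}$ that bridge is unavailable, so a substitute is needed. The key observation is that a type $(1,2)$ arc and its reverse still produce short bridges into and out of $z$: they cost one extra unit in one of the two distance estimates, but the resulting bound $1+2=3$ is still sharp enough to conclude $s,t\leq 3$.
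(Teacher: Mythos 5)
Your proof is correct and follows essentially the same route as the paper: reduce via Lemma \ref{changyong} to pairs $(s,t)$ with $s,t\geq 3$, then use a vertex of $\Gamma_{1,2}(x)$ as a bridge through Lemma \ref{jichu2} to force $s=t=3$. The only cosmetic differences are that the paper invokes the second statement of Lemma \ref{jichu2} (giving $N^+(x)=N^+(y)$ outright) and assumes $t\geq s$ without loss of generality, whereas you use the first statement plus a short exclusion argument and treat both coordinates symmetrically.
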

\begin{proof}
	Let $(s,t)\in \tilde{\partial}(\Gamma)$  with $t\geq s\geq 3$. By Lemma \ref{changyong}, it suffices to show that $s=t=3$. Let $(x,y)\in\Gamma_{s,t}$. Then $y\notin N^{+}(x)\cup N^{-}(x)$. Pick a vertex $w\in\Gamma_{1,2}(x)$. Since $\partial(x,y)=s\geq 3$, from Lemma \ref{jichu2}, we obtain $w\in N^{+}(x)=N^{+}(y)$, which implies $t=\partial(y,x)\leq 1+\partial(w,x)=3$. Thus, the desired result follows.
\end{proof}

\begin{lemma}\label{jl}
	Suppose that $(3,3)\notin \tilde{\partial}(\Gamma)$. Then the following hold:
	\begin{enumerate}
		\item\label{jl-1'}  $\Gamma_{1,2}^2\subseteq \{ \Gamma_{1,2},\Gamma_{2,1},\Gamma_{1,3},\Gamma_{2,2}\}$;
		
		\item\label{jl-2'} $\Gamma_{1,q}\Gamma_{q,1}\subseteq \{\Gamma_{0,0}, \Gamma_{1,2},\Gamma_{2,1},\Gamma_{1,3},\Gamma_{3,1},\Gamma_{2,2}\}$;
		
		\item\label{jl-3'} $\Gamma_{1,q}\Gamma_{1,3}\subseteq \{ \Gamma_{1,2},\Gamma_{1,3},\Gamma_{2,2}\}$;

		\item\label{jl-6'} $\Gamma_{1,p}\Gamma_{q,1}\subseteq \{ \Gamma_{1,2},\Gamma_{2,1},\Gamma_{1,3},\Gamma_{3,1},\Gamma_{2,2}\}$.
	\end{enumerate}
Here, $\{p,q\}=\{2,3\}$.
\end{lemma}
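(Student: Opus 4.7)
\medskip

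The plan is to treat all four inclusions uniformly as bookkeeping constraints on two-way distances of walks of length two. First, combine Lemma \ref{changyong3} with the hypothesis $(3,3)\notin\wz{\partial}(\Gamma)$ to obtain the ambient restriction
\[
\wz{\partial}(\Gamma)\subseteq\{(0,0),(1,2),(2,1),(1,3),(3,1),(2,2)\}.
\]
I will use two basic facts throughout: the standard identity $\Gamma_{0,0}\in \Gamma_{\wz{i}}\Gamma_{\wz{j}}$ if and only if $\wz{j}=\wz{i}^{*}$ (immediate from the partition of $X\times X$ by the relations), and the triangle-type bound that for $(x,y)\in \Gamma_{\wz{i}}$, $(y,z)\in \Gamma_{\wz{j}}$ the coordinates of $\wz{\partial}(x,z)$ are bounded by the corresponding coordinate sums of $\wz{i}$ and $\wz{j}$.

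For (i), every $\Gamma_{\wz{h}}\in \Gamma_{1,2}^{2}$ satisfies $\partial(x,z)\leq 2$, so $\wz{h}\in\{(0,0),(1,2),(1,3),(2,1),(2,2)\}$ after imposing the ambient restriction. Since $(1,2)^{*}=(2,1)\neq(1,2)$, the case $\wz{h}=(0,0)$ is excluded, yielding (i). For (iii), the same length-$2$ bound restricts $\Gamma_{1,q}\Gamma_{1,3}$ to lie in $\{(0,0),(1,2),(1,3),(2,1),(2,2)\}$. The $(0,0)$ case forces $(1,q)^{*}=(1,3)$, impossible since $(1,q)^{*}=(q,1)$. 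The only real step is ruling out $(2,1)$: if $(x,z)\in\Gamma_{2,1}$ with $y\in P_{(1,q),(1,3)}(x,z)$, then $(z,x),(x,y)\in A\Gamma$ gives a path of length $2$ from $z$ to $y$, contradicting $\partial(z,y)=3$ forced by $(y,z)\in\Gamma_{1,3}$. So (iii) follows.

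For (ii), a walk of length two through $\Gamma_{1,q}$ and $\Gamma_{q,1}$ has both coordinates of $\wz{\partial}(x,z)$ bounded by $1+q\leq 4$, but the ambient restriction immediately collapses the possibilities to $\{(0,0),(1,2),(2,1),(1,3),(3,1),(2,2)\}$, which is exactly the assertion (and here $(0,0)$ is allowed since $(1,q)^{*}=(q,1)$, corresponding to the trivial $z=x$ contribution). For (iv), the same ambient restriction gives $\Gamma_{1,p}\Gamma_{q,1}\subseteq\{(0,0),(1,2),(2,1),(1,3),(3,1),(2,2)\}$, and one removes $(0,0)$ by noting $(1,p)^{*}=(p,1)\neq(q,1)$ since $p\neq q$.

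No step is a genuine obstacle; the only place where a path-chasing argument is needed (rather than pure lookup) is the $(2,1)$ elimination in (iii). Everything else is a direct application of Lemma \ref{changyong3}, the triangle-type distance bound, and the relation-partition identity characterising when $\Gamma_{0,0}$ can appear in a product.
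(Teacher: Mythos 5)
Your proof is correct and follows essentially the same route as the paper, which simply declares all four inclusions ``immediate from Lemma \ref{changyong3}'' (the ambient restriction $\wz{\partial}(\Gamma)\subseteq\{(0,0),(1,2),(2,1),(1,3),(3,1),(2,2)\}$); you merely make explicit the routine distance bounds, the exclusion of $\Gamma_{0,0}$ via $p_{\wz{i},\wz{j}}^{(0,0)}\neq 0\Leftrightarrow\wz{j}=\wz{i}^{*}$, and the short path argument eliminating $(2,1)$ in (iii). All of these checks are accurate, so your write-up is a correct, slightly more detailed version of the paper's one-line proof.
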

\begin{proof}
	(i)--(iv) are immediate from Lemma \ref{changyong3}.
\end{proof}

\begin{lemma}\label{t34xj2*dier}
	Suppose $(3,3)\notin \tilde{\partial}(\Gamma)$. If $\Gamma_{2,2}\notin \Gamma_{1,2}\Gamma_{1,3}$,  then $\Gamma$ is isomorphic to one of the digraphs in Theorem \ref{xushumain4} \ref{main1-fu0} with $n=1$.
\end{lemma}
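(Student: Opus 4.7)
The aim is to exhibit $\Gamma$ as a lexicographic product $\Sigma\circ C_4$ with $\Sigma$ a semicomplete weakly distance-regular digraph of girth $3$. The strategy parallels Lemma~\ref{tdxw1}: single out a closed subset
\[
F=\{\Gamma_{0,0},\Gamma_{1,3},\Gamma_{3,1},\Gamma_{2,2}\}
\]
whose class $F(x)$ at each vertex carries the relations of $C_4$, verify closedness, and identify the quotient $\Gamma/F$ with $\Sigma$.

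The starting point is to combine $\Gamma_{2,2}\notin\Gamma_{1,2}\Gamma_{1,3}$ with Lemma~\ref{jl}(iii) to reduce $\Gamma_{1,2}\Gamma_{1,3}\subseteq\{\Gamma_{1,2},\Gamma_{1,3}\}$; dualising via Lemma~\ref{jb}(ii) yields further vanishing intersection numbers involving $\Gamma_{2,2}$, $\Gamma_{1,2}$ and $\Gamma_{1,3}$. Coupling Lemma~\ref{one} with Lemma~\ref{jl}(iii), one argues that $\Gamma_{1,3}^{2}=\{\Gamma_{2,2}\}$: the alternatives $\Gamma_{1,2}$ or $\Gamma_{1,3}$ in $\Gamma_{1,3}^{2}$ are ruled out by tracing a putative configuration through Lemma~\ref{jichu2} and the just-established vanishings, in the spirit of Step~\ref{t234fenxi}. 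Symmetrically one obtains $\Gamma_{3,1}^{2}=\{\Gamma_{2,2}\}$ and $\Gamma_{1,3}\Gamma_{3,1}\subseteq\{\Gamma_{0,0},\Gamma_{2,2}\}$.

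With these products determined, Lemmas~\ref{le216fj} and~\ref{main3''} applied to a partite set meeting $\Gamma_{1,3}(x)$, together with Lemma~\ref{pssize}, pin down the valencies $k_{1,3}=k_{3,1}=k_{2,2}=1$ and force each partite set of $\Gamma$ to have size~$2$. Closedness of $F$ is then immediate from Lemma~\ref{jl}(ii) and the computed valencies, and $\Gamma[F(x)]\cong C_4$ for every $x$. Imitating the final part of the proof of Lemma~\ref{tdxw1}, one next establishes the lifting claim that $(x,y)\in A\Gamma$ implies $(x',y')\in A\Gamma$ for corresponding $x'\in F(x)$, $y'\in F(y)$: the case $(x,x')\in\Gamma_{2,2}$ uses $\partial(x,x')\geq 3$ and Lemma~\ref{jichu2} directly, while the $(1,3)$-case is handled via the valency-one structure inside the fiber. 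Setting $\Sigma:=\Gamma/F$, the standard quotient argument of Lemma~\ref{tdxw1} then shows that $\Sigma$ is a commutative weakly distance-regular digraph with $\wz{\partial}(\Sigma)=\{(0,0),(1,2),(2,1)\}$, hence semicomplete of girth~$3$, giving $\Gamma\cong\Sigma\circ C_4$.

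The main obstacle is the simultaneous determination of $\Gamma_{1,3}^{2}=\{\Gamma_{2,2}\}$ and the valency identities $k_{1,3}=k_{3,1}=k_{2,2}=1$: the hypothesis forbids only one transition, so each candidate non-$C_4$ configuration inside a fiber must be eliminated by a bespoke intersection-number manipulation in the style of Steps~\ref{g24gs} and~\ref{t234fenxi}. The subsequent lifting step is technically mild but requires careful bookkeeping of the $C_4$-labelling within fibers when transporting arcs across different classes of $F$.
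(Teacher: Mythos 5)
Your overall architecture coincides with the paper's: both proofs isolate the closed subset $F=\{\Gamma_{0,0},\Gamma_{1,3},\Gamma_{3,1},\Gamma_{2,2}\}=\langle\Gamma_{1,3}\rangle$, show $\Gamma_{1,3}^2=\{\Gamma_{2,2}\}$ and $\Gamma_{1,3}\Gamma_{3,1}\subseteq\{\Gamma_{0,0},\Gamma_{2,2}\}$, prove the fiber digraph is $C_4$, and realize $\Gamma$ as $\Sigma\circ C_4$ with $\Sigma=\Gamma/F$ semicomplete of girth $3$. However, there is a concrete gap at the point you yourself flag as the main obstacle: the claim that Lemmas \ref{le216fj}, \ref{main3''} and \ref{pssize} ``pin down'' $k_{1,3}=k_{3,1}=k_{2,2}=1$. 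Those lemmas, applied to the partite set $V\supseteq\Gamma_{1,3}(x)\cup\Gamma_{3,1}(x)$ (after one checks via Lemma \ref{jiaojik} that $\Gamma_{1,2}(x)\cap V=\Gamma_{2,1}(x)\cap V=\emptyset$), yield only the single relation $|V|=2k_{1,3}=k_{2,2}+1$; they do not force $k_{1,3}=1$. An additional arithmetic input is required. The paper supplies it by observing that the induced digraph $\Delta$ on $F(x)$ is a non-symmetric $4$-class scheme on two partite sets, hence by \cite[Proposition 5.3, Theorem 4.4]{JG14} a doubly regular $(2,m')$-team tournament of Type \Rmnum{2} with $m'-1\mid 1$, whence $m'=2$ and $\Delta\simeq C_4$. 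Alternatively one can close your gap elementarily: Lemma \ref{jb}\ref{jb-1} applied to $\Gamma_{1,3}\Gamma_{3,1}\subseteq\{\Gamma_{0,0},\Gamma_{2,2}\}$ gives $k_{1,3}(k_{1,3}-1)=p_{(1,3),(3,1)}^{(2,2)}(2k_{1,3}-1)$, and since $2k_{1,3}-1$ is coprime to both $k_{1,3}$ and $k_{1,3}-1$ this forces $k_{1,3}=1$. As written, neither argument appears in your proposal, so the step is unjustified.

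A secondary, smaller issue: your route to $\Gamma_{1,3}^2=\{\Gamma_{2,2}\}$ (``tracing a putative configuration through Lemma \ref{jichu2}'') is only sketched, and it is not clear that a configuration-chasing argument alone rules out $\Gamma_{1,3}\in\Gamma_{1,3}^2$. The paper instead derives $p_{(1,2),(3,1)}^{(1,3)}=0$ and its consequences from valency sums (Lemma \ref{jb}\ref{jb-2},\ref{jb-3} combined with Lemma \ref{jl}), then uses Lemmas \ref{one}, \ref{le216fj}, \ref{main3''} and a second round of valency sums to force $p_{(1,3),(1,3)}^{(1,3)}=p_{(1,3),(1,2)}^{(1,3)}=p_{(1,3),(3,1)}^{(1,3)}=0$ and $t=1$ simultaneously. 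You would need to make this counting explicit (or find a genuine structural substitute) for the first half of your argument to stand.
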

\begin{proof}
 By  Lemma \ref{jb} \ref{jb-2},\ref{jb-3} and Lemma \ref{jl}  \ref{jl-3'}, \ref{jl-6'}, we get
\begin{align}
k_{1,2}&=p_{(2,1),(1,2)}^{(1,3)}+p_{(2,1),(1,3)}^{(1,3)}=p_{(1,2),(2,1)}^{(1,3)}+p_{(1,2),(1,3)}^{(1,3)},\nonumber\\
k_{1,2}&=p_{(1,2),(1,2)}^{(1,3)}+p_{(1,2),(2,1)}^{(1,3)}+p_{(1,2),(1,3)}^{(1,3)}+p_{(1,2),(3,1)}^{(1,3)}+p_{(1,2),(2,2)}^{(1,3)},\nonumber
\end{align}
which imply  that
 $p_{(1,2),(1,2)}^{(1,3)}=p_{(1,2),(3,1)}^{(1,3)}=p_{(1,2),(2,2)}^{(1,3)}=0$. Lemma \ref{jb} \ref{jb-2} implies $\Gamma_{1,3}\Gamma_{2,1}\subseteq \{\Gamma_{2,1},\Gamma_{1,3}\}$. Since $p_{(1,2),(1,2)}^{(1,3)}=0$, from Lemma \ref{jl} \ref{jl-1'}, one obtains $\Gamma_{1,2}^2\subseteq \{ \Gamma_{1,2},\Gamma_{2,1},\Gamma_{2,2}\}$.

 Pick a vertex $x\in V\Gamma$. Assume that there are exactly $t$ partite sets having nonempty intersection with $\Gamma_{1,3}(x)$. Set $F=\langle  \Gamma_{1,3}\rangle$.
Let   $\Sigma:=\Gamma/F$ and $\Delta$ be a digraph with the vertex set $F(x)$ such that $(y,z)\in A\Delta$ whenever $(y,z)\in\Gamma_{1,3}$. Next, we prove this lemma  step by step.

\begin{step}\label{22bu1213-2}
Show that $\Gamma_{3,1}(x)\cup\Gamma_{1,3}(x)\subseteq V$ for some partite set $V$ and  $\Gamma_{1,3}\Gamma_{\wz{i}}\subseteq\{\Gamma_{0,0},\Gamma_{2,2}\}$ for $\wz{i} \in \{(1,3),(3,1)\}$.
\end{step}
Since $p_{(1,2),(3,1)}^{(1,3)}=0$, from  Lemma \ref{jb} \ref{jb-2}, one has $p_{(1,3),(1,3)}^{(1,2)}=0$. It follows from Lemma \ref{jl}  \ref{jl-3'} that $\Gamma_{1,3}^2\subseteq \{ \Gamma_{1,3},\Gamma_{2,2}\}$. By Lemma \ref{one}, we have $\Gamma_{2,2}\in \Gamma_{1,3}^2$. Pick a partite set $V$ such that $V\cap \Gamma_{1,3}(x)\ne \emptyset$. By Lemma \ref{le216fj}, one gets $|\Gamma_{3,1}(x)\cap V|=p_{(3,1),(2,2)}^{(1,3)}=\frac{k_{1,3}}{t}$. Since $(3,3)\notin \tilde{\partial}(\Gamma)$, from Lemmas  \ref{main3''} and \ref{changyong3}, we obtain $p_{(1,3),(2,2)}^{(1,3)}=p_{(3,1),(2,2)}^{(3,1)}=\frac{k_{1,3}}{t}-1$.

	Since $\Gamma_{1,3}^2\subseteq \{ \Gamma_{1,3},\Gamma_{2,2}\}$, from  Lemma \ref{jb} \ref{jb-2},  \ref{jb-3} and Lemma \ref{jl} \ref{jl-2'}, we get
\begin{align}	k_{1,3}&=p_{(3,1),(1,3)}^{(1,3)}+p_{(3,1),(2,2)}^{(1,3)}=p_{(1,3),(1,3)}^{(1,3)}+p_{(3,1),(2,2)}^{(1,3)},\nonumber\\ k_{1,3}&=p_{(1,3),(1,2)}^{(1,3)}+p_{(1,3),(2,1)}^{(1,3)}+p_{(1,3),(1,3)}^{(1,3)}+p_{(1,3),(3,1)}^{(1,3)}+p_{(1,3),(2,2)}^{(1,3)}+1.\nonumber
	\end{align}
	By substituting $p_{(3,1),(2,2)}^{(1,3)}=\frac{k_{1,3}}{t}$ and $p_{(1,3),(2,2)}^{(1,3)}=\frac{k_{1,3}}{t}-1$
	into the above equations, one has
\begin{align}	(1-\frac{1}{t})k_{1,3}=p_{(1,3),(1,3)}^{(1,3)}=p_{(1,3),(1,2)}^{(1,3)}+p_{(1,3),(2,1)}^{(1,3)}+p_{(1,3),(1,3)}^{(1,3)}+p_{(1,3),(3,1)}^{(1,3)},\nonumber
	\end{align}
which implies that $p_{(1,3),(1,2)}^{(1,3)}=p_{(1,3),(3,1)}^{(1,3)}=0$. By Lemma \ref{jb} \ref{jb-2}, we obtain $p_{(1,3),(3,1)}^{(1,2)}=p_{(1,3),(1,3)}^{(1,3)}=0$.  It follows from Lemma \ref{jl} \ref{jl-2'} and \ref{jl-3'} that $\Gamma_{1,3}^2=\{ \Gamma_{2,2}\}$ and $\Gamma_{1,3}\Gamma_{3,1}\subseteq \{\Gamma_{0,0},\Gamma_{2,2}\}$. Since $(1-\frac{1}{t})k_{1,3}=p_{(1,3),(1,3)}^{(1,3)}=0$, one has $t=1$. It follows that $\Gamma_{1,3}(x)\subseteq V$. Since $|\Gamma_{3,1}(x)\cap V|=\frac{k_{1,3}}{t}=k_{1,3}$, we get $\Gamma_{1,3}(x)\cup\Gamma_{3,1}(x)\subseteq V$. Thus, the desired result follows.

\begin{step}\label{22bu1213-3}
Show that $\Gamma_{1,2}\Gamma_{\wz{i}}=\{ \Gamma_{1,2}\}$ with $\wz{i} \in \{(1,3),(3,1)\}$.
\end{step}
By  Step \ref{22bu1213-2}, we get $\Gamma_{1,2}\notin \Gamma_{1,3}\Gamma_{3,1}$, which implies that $p_{(1,3),(3,1)}^{(1,2)}=0$. In view of Lemma \ref{jb} \ref{jb-2}, one obtains $p_{(1,3),(2,1)}^{(1,3)}=p_{(1,3),(1,2)}^{(1,3)}=0$. It follows that $\Gamma_{1,3}\notin \Gamma_{1,3}\Gamma_{2,1}$ and  $\Gamma_{1,3}\notin \Gamma_{1,2}\Gamma_{1,3}$. Since $\Gamma_{2,2}\notin \Gamma_{1,2}\Gamma_{1,3}$ and $\Gamma_{1,3}\Gamma_{2,1}\subseteq \{\Gamma_{2,1},\Gamma_{1,3}\}$, from   Lemma \ref{jl} \ref{jl-3'}, we have $\Gamma_{1,2}\Gamma_{1,3}=\{ \Gamma_{1,2}\}$ and $\Gamma_{1,3}\Gamma_{2,1}=\{\Gamma_{2,1}\}$. By the commutativity of $\Gamma$,  the desired result follows.

\begin{step}\label{t34xj2*diyi}
Show that $\Gamma_{2,2}\notin \Gamma_{1,2}^2$.
\end{step}
	Suppose for the contrary that $\Gamma_{2,2}\in \Gamma_{1,2}^2$.
	Let $(x,y)\in \Gamma_{2,2}$ and $z\in P_{(1,2),(1,2)}(x,y)$. Pick a vertex $w\in \Gamma_{1,3}(y)$. Since $y\in P_{(1,2),(1,3)}(z,w)$, from Step \ref{22bu1213-3}, we get $(z,w)\in \Gamma_{1,2}$. Since $z\in P_{(1,2),(1,2)}(x,w)$ and $\Gamma_{1,2}^2\subseteq \{ \Gamma_{1,2},\Gamma_{2,1},\Gamma_{2,2}\}$, one obtains $(x,w)\in \Gamma_{1,2}\cup\Gamma_{2,1}\cup\Gamma_{2,2}$.  Since $y\notin N^+(x)\cup N^-(x)$, from Lemma \ref{jichu2}, one has $(x,w)$ or $(w,x)\in A\Gamma$, which implies that $(x,w)\in \Gamma_{1,2}\cup\Gamma_{2,1}$. Since $w\in  P_{(1,3),(1,2)}(y,x)\cup P_{(1,3),(2,1)}(y,x)$, we get $\Gamma_{2,2}\in \Gamma_{1,3}\Gamma_{1,2}\cup \Gamma_{1,3}\Gamma_{2,1}$, contrary to Step \ref{22bu1213-3}.

\begin{step}\label{claim0}
Show that $F=\{\Gamma_{0,0},\Gamma_{1,3},\Gamma_{3,1},\Gamma_{2,2}\}$.
\end{step}	
By Step \ref{22bu1213-2}, one has $\Gamma_{1,3}^2=\{\Gamma_{2,2}\}$. By the first statement in Lemma \ref{jichu2}, we have  $\Gamma_{1,3}\Gamma_{2,2}\subseteq\{\Gamma_{1,2},\Gamma_{2,1},\Gamma_{1,3},\Gamma_{3,1}\}$. Lemma \ref{jb} \ref{jb-2} and Step \ref{22bu1213-3} imply $\Gamma_{1,2},\Gamma_{2,1}\notin \Gamma_{1,3}\Gamma_{2,2}$. It follows that $\Gamma_{1,3}\Gamma_{2,2}\subseteq\{\Gamma_{1,3},\Gamma_{3,1}\}$. Then $\Gamma_{1,3}^3\subseteq\{\Gamma_{1,3},\Gamma_{3,1}\}$. Step  \ref{22bu1213-2} implies $\Gamma_{1,3}\Gamma_{3,1}\subseteq\{\Gamma_{0,0},\Gamma_{2,2}\}$. Then $\Gamma_{1,3}^i\subseteq\{\Gamma_{0,0},\Gamma_{1,3},\Gamma_{3,1},\Gamma_{2,2}\}$ for all $i\geq 1$. Thus, $F=\left\langle \Gamma_{1,3}\right\rangle=\{\Gamma_{0,0},\Gamma_{1,3},\Gamma_{3,1},\Gamma_{2,2}\}$.

\begin{step}\label{claim1}
Show that $\Gamma$ is isomorphic to $\Sigma\circ \Delta$.
\end{step}	
 By the first statement in Lemma \ref{jichu2}, we have  $\Gamma_{1,2}\Gamma_{2,2}\subseteq\{\Gamma_{1,2},\Gamma_{2,1},\Gamma_{1,3},\Gamma_{3,1}\}$. Lemma \ref{jb} \ref{jb-2} and Step \ref{22bu1213-3} imply $\Gamma_{1,3},\Gamma_{3,1}\notin \Gamma_{1,2}\Gamma_{2,2}$. By Lemma \ref{jb} \ref{jb-2} and Step \ref{t34xj2*diyi}, one gets $\Gamma_{2,1}\notin \Gamma_{1,2}\Gamma_{2,2}$. Thus, $\Gamma_{1,2}\Gamma_{2,2}=\{\Gamma_{1,2}\}$.

By Step \ref{claim0}, it sufficies to show that $(x',y')\in\Gamma_{1,2}$ for $x'\in F(x)$ and $y'\in F(y)$ when $(x,y)\in\Gamma_{1,2}$. Without loss of generality, we may assume $x'\neq x$.
Since $F=\{\Gamma_{0,0},\Gamma_{1,3},\Gamma_{3,1},\Gamma_{2,2}\}$ from Step \ref{claim0}, we have $(x,x')\in\Gamma_{1,3}\cup\Gamma_{3,1}\cup\Gamma_{2,2}$ and $(y,y')\in\Gamma_{0,0}\cup\Gamma_{1,3}\cup\Gamma_{3,1}\cup\Gamma_{2,2}$.
If $(x',x)\in\Gamma_{1,3}\cup\Gamma_{3,1}$, from Step   \ref{22bu1213-3}, then  $(x',y)\in \Gamma_{1,2}$.
	If $(x',x)\in\Gamma_{2,2}$,  then $(x',y)\in \Gamma_{1,2}$ since $\Gamma_{1,2}\Gamma_{2,2}=\{\Gamma_{1,2}\}$.  Therefore,  $(x',y)\in \Gamma_{1,2}$.

If $(y,y')\in\Gamma_{0,0}$, then $(x',y')\in\Gamma_{1,2}$. If  $(y,y')\in\Gamma_{1,3}\cup\Gamma_{3,1}$,  from Step  \ref{22bu1213-3}, then  $(x',y')\in \Gamma_{1,2}$.  If $(y,y')\in\Gamma_{2,2}$,  then $(x',y')\in \Gamma_{1,2}$ since $\Gamma_{1,2}\Gamma_{2,2}=\{\Gamma_{1,2}\}$. Thus, the desired result follows.


	
\begin{step}\label{claim2}
Show that  $\Delta\simeq C_4$.
\end{step}
We claim that $\partial_{\Delta}(x_0,x_1)=\partial_{\Gamma}(x_0,x_1)$ for any $x_0,x_1\in V\Delta$. By Step \ref{22bu1213-2}, we have  $p_{(1,3),(1,3)}^{(2,2)}\ne 0$. Since 	$\partial_{\Delta}(x_0,x_1)\geq\partial_{\Gamma}(x_0,x_1)$,  it suffices to show  that $\partial_{\Gamma}(x_0,x_1)\geq\partial_{\Delta}(x_0,x_1)$.  By Lemma \ref{changyong3}, $\partial_{\Gamma}(x_0,x_1)\leq 3$.

Suppose $\partial_{\Gamma}(x_0,x_1)=1$. Since $x_0,x_1\in F(x)$ and $F=\{\Gamma_{0,0},\Gamma_{1,3},\Gamma_{3,1},\Gamma_{2,2}\}$ from Step \ref{claim0},   we get $\wz{\partial}_{\Gamma}(x_0,x_1)=(1,3)$, which implies that $(x_0,x_1)\in A\Delta$. It follows that $\partial_{\Delta}(x_0,x_1)=1$.

Suppose $\partial_{\Gamma}(x_0,x_1)=2$. Since $x_0,x_1\in F(x)$ and $F=\{\Gamma_{0,0},\Gamma_{1,3},\Gamma_{3,1},\Gamma_{2,2}\}$ from Step \ref{claim0},  one obtains $\wz{\partial}_{\Gamma}(x_0,x_1)=(2,2)$. Since  $p_{(1,3),(1,3)}^{(2,2)}\ne 0$,  there exists a path $(x_0,v,x_1)$ consisting of arcs of type $(1,3)$. It follows that $\partial_{\Delta}(x_0,x_1)\leq 2$.

Suppose $\partial_{\Gamma}(x_0,x_1)=3$. Since $x_0,x_1\in F(x)$ and $F=\{\Gamma_{0,0},\Gamma_{1,3},\Gamma_{3,1},\Gamma_{2,2}\}$ from Step \ref{claim0},  one has  $\wz{\partial}_{\Gamma}(x_0,x_1)=(3,1)$. Since $p_{(1,3),(1,3)}^{(2,2)}\ne 0$, from Lemma \ref{jb} \ref{jb-2}, we get  $p_{(1,3),(2,2)}^{(3,1)}=p_{(2,2),(3,1)}^{(1,3)}\ne 0$,  which implies that there exists a vertex  $z\in P_{(1,3),(2,2)}(x_0,x_1)$.  Since $p_{(1,3),(1,3)}^{(2,2)}\ne 0$ again, there exists a path $(z,z_1,x_1)$ consisting of arcs of type $(1,3)$. It follows that $(x_0,z,z_1,x_1)$ is a path consisting of arcs of type $(1,3)$. Then   $\partial_{\Delta}(x_0,x_1)\leq 3$. Thus, our claim is valid.

  In view of  Step \ref{claim0} and the  claim, $\Delta$ is  a semicomplete multipartite weakly distance-regular digraph with $\tilde{\partial}(\Delta)=\{(0,0),(1,3),(3,1),(2,2)\}$. By  Step \ref{22bu1213-2}, we have  $\Gamma_{1,3}(x)\cup\Gamma_{3,1}(x)\subseteq V$ for some partite set $V$. Then $\Delta$ has two partite sets.
	By \cite[Proposition 5.3]{JG14}, $\Delta$ is a doubly regular $(2,m')$-team tournament  of Type  \Rmnum{2} for some positive integer $m'$. By \cite[Theorem 4.4]{JG14}, we have $m'-1\mid 1$, which implies that $m'=2$. Thus, $\Delta\simeq C_4$.

\begin{step}\label{xinjia'}
Show that $\Sigma$ is a  semicomplete weakly distance-regular digraph with girth $3$.
\end{step}
  Suppose that there exist  $F(x)$ and $F(y)$ such that $F(x)\ne F(y)$ and $(x,y)\notin A\Gamma\cup A\Gamma^\top$. Since $(3,3)\notin \tilde{\partial}(\Gamma)$, from Lemma \ref{changyong3}, we have $(x,y)\in \Gamma_{2,2}$. In view of Step \ref{22bu1213-2}, one gets $p_{(1,3),(1,3)}^{(2,2)}\ne 0$, which implies that there exists $z\in P_{(1,3),(1,3)}(x,y)$. Then $y\in F(x)$, and so $F(x)=F(y)$, a contradiction. Then $(x,y)\in A\Gamma\cup A\Gamma^\top$, which implies that $\Sigma$ is a semicomplete digraph. By Step \ref{claim0}, we get $(x,y)\in \Gamma_{1,2}\cup\Gamma_{2,1}$.

Let $x,y\in V\Gamma$ and $y\notin F(x)$. By Lemma \ref{changyong3} and Step \ref{claim0}, one obtains $\partial_{\Gamma}(x,y)=1$ or $2$. If $\partial_{\Gamma}(x,y)=1$, then $\wz{\partial}_{\Gamma}(x,y)=(1,2)$, and so $\partial_{\Sigma}(x,y)=1$. Now suppose $\partial_{\Gamma}(x,y)=2$. Then $\wz{\partial}_{\Gamma}(x,y)=(2,1)$. Pick a shortest path $(x,z,y)$ from $x$ to $y$ in $\Gamma$. Since $T=\{3,4\}$, we have $(x,z),(z,y)\in \Gamma_{1,2}$, which implies that  $(F(x),F(z),F(y))$ is a path in $\Sigma$. Then $\partial_{\Sigma}(F(x),F(y))\leq 2$. Thus, $\partial_{\Sigma}(F(x),F(y))\leq \partial_{\Gamma}(x,y)$.

Pick a shortest path $(F(x)=F(y_0),F(y_1),\ldots,F(y_{h})=F(y))$ from $F(x)$ to $F(y)$ in $\Sigma$. It follows that there exists $y_i'\in F(y_i)$ for each $i\in\{0,1,\ldots,h\}$ such that $(y_0',y_1',\ldots,y_h')$ is a path in $\Gamma$. By Step \ref{claim1},  $(x,y_1',\ldots,y_{h-1}',y)$ is a path in $\Gamma$. It follows that $\partial_{\Gamma}(x,y)\leq\partial_{\Sigma}(F(x),F(y))$. Thus, $\partial_{\Gamma}(x,y)=\partial_{\Sigma}(F(x),F(y))$ and $\tilde{\partial}(\Sigma)=\{(0,0),(1,2),(2,1)\}$.

Let $\wz{h}\in\wz{\partial}(\Sigma)$ and $(F(u),F(v))\in\Sigma_{\wz{h}}$. Then $(u,v)\in \Gamma_{\wz{h}}$. For $\wz{i},\wz{j}\in\wz{\partial}(\Sigma)$, we have
$$P_{\wz{i},\wz{j}}(u,v)=\cup_{w\in P_{\wz{i},\wz{j}}(u,v)}F(w)
=\cup_{F(w)\in \Sigma_{\wz{i}}(F(u))\cap\Sigma_{\wz{j}^*}(F(v))}F(w).$$
Since  $|F(w)|=2k_{1,3}+k_{2,2}+1$ for all  $w\in V\Gamma$, we have $$|\Sigma_{\wz{i}}(F(u))\cap\Sigma_{\wz{j}^*}(F(v))|=\frac{p_{\wz{i},\wz{j}}^{\wz{h}}}{2k_{1,3}+k_{2,2}+1}.$$
Since $\Gamma$ is a commutative weakly distance-regular digraph, $\Sigma$ is a  commutative weakly distance-regular digraph. Thus, the desired result follows.
\end{proof}



\begin{lemma}\label{zuihouxli}
	Suppose $(3,3)\notin \tilde{\partial}(\Gamma)$. If $\Gamma_{2,2}\in \Gamma_{1,2}\Gamma_{1,3}$,  then $\Gamma$ is isomorphic to  the digraph in Theorem \ref{xushumain4} \ref{main1-fu2} with $n=1$.
\end{lemma}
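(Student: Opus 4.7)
The plan is to follow the blueprint of the proof of Lemma~\ref{t34xj2*dier}, but with the hypothesis $\Gamma_{2,2}\in\Gamma_{1,2}\Gamma_{1,3}$ in place of its negation. First, Lemma~\ref{changyong3} together with $(3,3)\notin\tilde\partial(\Gamma)$ and $T=\{3,4\}$ forces $\tilde\partial(\Gamma)=\{(0,0),(1,2),(2,1),(1,3),(3,1),(2,2)\}$, and Lemmas~\ref{jichu} and~\ref{jichu0} give every partite set size $1+k_{2,2}$. Since $p_{(1,2),(1,3)}^{(2,2)}>0$ implies $p_{(1,3),(1,2)}^{(2,2)}>0$ by commutativity, I can apply Lemma~\ref{le216fj} to the pair $\{(1,2),(3,1)\}$, with parameter $s$ equal to the number of partite sets meeting $\Gamma_{1,2}(x)$, and to the pair $\{(1,3),(2,1)\}$, with parameter $s'$ equal to the number of partite sets meeting $\Gamma_{1,3}(x)$. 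This gives $|V\cap\Gamma_{1,2}(x)|=k_{1,2}/s$ and $|V\cap\Gamma_{3,1}(x)|=k_{1,3}/s$ for every partite set $V$ meeting $\Gamma_{1,2}(x)$, with analogous identities in $s'$ for the other pair; in particular, $V$ meets $\Gamma_{1,2}(x)$ if and only if it meets $\Gamma_{3,1}(x)$, and similarly for $\Gamma_{1,3}(x)$ and $\Gamma_{2,1}(x)$.

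Next, I would assemble these local counts into global ones. Summing $|V|$ over all partite sets yields $m(1+k_{2,2})=1+k_{2,2}+2(k_{1,2}+k_{1,3})$, where $m$ is the number of partite sets. Applying Lemma~\ref{pssize} to a partite set meeting $\Gamma_{1,2}(x)$, and using the decompositions of $\Gamma_{1,2}\Gamma_{1,3}$ and $\Gamma_{1,2}\Gamma_{3,1}$ from Lemma~\ref{jl} together with Lemma~\ref{jiaojik}, I expect to rule out simultaneous meetings of the $S_1$- and $S_2$-families and conclude $s=s'=1$, hence $k_{1,2}+k_{1,3}=1+k_{2,2}$ and $m=3$. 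With $s=s'=1$, Lemma~\ref{main3''} then produces $p_{(1,2),(2,2)}^{(1,2)}=k_{1,2}-1$ and $p_{(1,3),(2,2)}^{(1,3)}=k_{1,3}-1$.

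The third step is to feed these identities, along with the Lemma~\ref{le216fj} values of $p_{(3,1),(2,2)}^{(1,2)}$ and $p_{(1,2),(2,2)}^{(3,1)}$, into the quadratic relations of Lemma~\ref{jb}~\ref{jb-4} with well-chosen quadruples (e.g.\ $i=j=(1,2)$, $l=(1,3)$, $m=(2,2)$). This should yield the additional equalities forcing $k_{1,2}=k_{1,3}=k_{2,2}=1$, so $|V\Gamma|=6$. At this point every nontrivial relation $\Gamma_{\wz i}$ is a bijection of $V\Gamma$, and the scheme structure is completely rigid; sending $x\mapsto 0$, the unique $y\in\Gamma_{1,3}(x)\mapsto 1$, and extending by iterating the action of $\Gamma_{1,3}$ yields the explicit isomorphism with $\mathrm{Cay}(\mathbb{Z}_6,\{1,2\})$, identifying $\Gamma$ with the digraph in Theorem~\ref{xushumain4}~\ref{main1-fu2} at $n=1$.

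The main obstacle will be the final step of driving the valencies down from the single identity $k_{1,2}+k_{1,3}=1+k_{2,2}$ to the individual equalities $k_{1,2}=k_{1,3}=k_{2,2}=1$. The partite-set counting alone is insufficient, and the quadratic constraints from Lemma~\ref{jb}~\ref{jb-4}, combined with the integrality and non-negativity of the intersection numbers and the explicit product restrictions in Lemma~\ref{jl}, will all be needed to close the gap.
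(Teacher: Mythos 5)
Your proposal is an outline in the same general spirit as the paper's argument (local counts via Lemma~\ref{le216fj}, partite-set sizes, and the quadratic relations of Lemma~\ref{jb}~\ref{jb-4}), but it leaves the three steps that actually carry the proof either unaddressed or explicitly unresolved, so it does not constitute a proof.

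First, you never establish $\Gamma_{2,2}\notin\Gamma_{1,3}^{2}$, which is the paper's opening (and longest) step. Without it you cannot invoke Lemma~\ref{jiaojik} to separate the partite sets meeting $\Gamma_{1,3}(x)$ from those meeting $\Gamma_{3,1}(x)$, and a priori a single partite set $V$ may meet all four of $\Gamma_{1,2}(x),\Gamma_{2,1}(x),\Gamma_{1,3}(x),\Gamma_{3,1}(x)$, giving $|V|=\tfrac{2(k_{1,2}+k_{1,3})}{t}$ rather than $\tfrac{k_{1,2}+k_{1,3}}{t}$; ruling this configuration out is nontrivial and changes all of your subsequent arithmetic. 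Second, your route to $s=s'=1$ does not work as stated: once you know each partite set meeting $\Gamma_{1,2}(x)$ has size $\tfrac{k_{1,2}+k_{1,3}}{s}$ and similarly for $s'$, equality of partite-set sizes gives $s=s'$, and the global count $m(1+k_{2,2})=1+k_{2,2}+2(k_{1,2}+k_{1,3})$ is then satisfied identically with $m=2s+1$ for \emph{every} $s$, so counting alone cannot force $s=1$. The paper instead needs Lemma~\ref{jb}~\ref{jb-4} to show $\Gamma_{1,3}\Gamma_{3,1}\subseteq\{\Gamma_{0,0},\Gamma_{2,2}\}$ and then reads $t=1$ off the identity $k_{1,3}=p_{(1,3),(0,0)}^{(1,3)}+p_{(1,3),(2,2)}^{(1,3)}$. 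Third, the step you yourself flag as ``the main obstacle''---getting from $k_{1,2}+k_{1,3}=k_{2,2}+1$ down to $k_{1,2}=k_{1,3}=k_{2,2}=1$---is precisely where the real work lies: the paper first pins down $\Gamma_{1,3}^{2}=\{\Gamma_{1,2}\}$ and $\Gamma_{1,2}\Gamma_{1,3}=\{\Gamma_{2,2}\}$, extracts $p_{(1,2),(1,2)}^{(1,3)}=k_{1,2}-k_{1,3}$, and combines the relation $p_{(1,3),(1,3)}^{(1,2)}p_{(1,2),(1,2)}^{(1,3)}=p_{(1,2),(1,3)}^{(2,2)}p_{(2,2),(1,3)}^{(1,3)}$ with a divisibility argument on $k_{1,3}/s=s^{2}-s+1$ (where $s=k_{1,2}/k_{1,3}$) to force $s=1$. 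None of this is present in your sketch, so the proposal as written has a genuine gap rather than a complete alternative argument.
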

\begin{proof}
Pick a vertex $x\in V\Gamma$. Assume that there are exactly $t$ partite sets having nonempty intersection with $\Gamma_{1,3}(x)$.
  Next, we prove this lemma step by step.
\begin{stepp}\label{jiegou}
 	Show that $\Gamma_{2,2}\notin  \Gamma_{1,3}^2$.
\end{stepp}
	Suppose  for the contrary that $\Gamma_{2,2}\in \Gamma_{1,3}^2$.  Pick a partite set $V$ such that $\Gamma_{1,3}(x)\cap V\ne \emptyset$. Since $\Gamma_{2,2}\in \Gamma_{1,3}^2$,  from Lemma \ref{le216fj}, we have
\begin{align}
|\Gamma_{3,1}(x)\cap V|=p_{(3,1),(2,2)}^{(1,3)}=\frac{k_{1,3}}{t}.\label{5-1}\tag{7.1}
	\end{align}
Since $V$ was arbitrary,  there are exactly $t$ partite sets having nonempty intersection with $\Gamma_{3,1}(x)$. By Lemma \ref{le216fj} again, one gets  $|\Gamma_{1,3}(x)\cap V|=p_{(1,3),(2,2)}^{(3,1)}=p_{(3,1),(2,2)}^{(1,3)}=\frac{k_{1,3}}{t}$. Since $\Gamma_{2,2}\in \Gamma_{1,2}\Gamma_{1,3}$, from  Lemma \ref{le216fj}, we obtain   $|\Gamma_{2,1}(x)\cap V|=p_{(2,1),(2,2)}^{(1,3)}=\frac{k_{1,2}}{t}$ and $|\Gamma_{1,2}(x)\cap V|=p_{(1,2),(2,2)}^{(3,1)}=\frac{k_{1,2}}{t}$. Thus,
\begin{align}
|\Gamma_{1,q}(x)\cap V|=|\Gamma_{q,1}(x)\cap V|=\frac{k_{1,q}}{t}~\text{with}~ q\in \{2,3\}.\label{5-1'}\tag{7.2}
	\end{align}
	By Lemma \ref{pssize}, one has
	$|V|=\frac{2(k_{1,2}+k_{1,3})}{t}.$
In view of Lemmas \ref{jichu0} and \ref{changyong3}, we obtain $|V|=k_{2,2}+1$. Then $k_{2,2}+1=\frac{2(k_{1,2}+k_{1,3})}{t}$.
 By the first statement in Lemma \ref{jichu2}, we have  $\Gamma_{1,3}\Gamma_{2,2}\subseteq\{\Gamma_{1,2},\Gamma_{2,1},\Gamma_{1,3},\Gamma_{3,1}\}$.
 In view of Lemma \ref{jb} \ref{jb-2} and  \ref{jb-3}, one gets
	\begin{align} \frac{2(k_{1,2}+k_{1,3})}{t}-1=k_{2,2}=p_{(1,3),(2,2)}^{(1,3)}+p_{(3,1),(2,2)}^{(1,3)}+p_{(1,2),(2,2)}^{(1,3)}+p_{(2,1),(2,2)}^{(1,3)}.\label{ls}\tag{7.3}
	\end{align}
Since $|\Gamma_{1,3}(x)\cap V|=\frac{k_{1,3}}{t}$, from Lemmas \ref{main3''} and \ref{changyong3}, one has $p_{(1,3),(2,2)}^{(1,3)}=\frac{k_{1,3}}{t}-1$.		By substituting $p_{(1,3),(2,2)}^{(1,3)}=\frac{k_{1,3}}{t}-1$, $p_{(3,1),(2,2)}^{(1,3)}=\frac{k_{1,3}}{t}$ and $p_{(2,1),(2,2)}^{(1,3)}=\frac{k_{1,2}}{t}$
	into \eqref{ls}, we get $p_{(1,2),(2,2)}^{(1,3)}=\frac{k_{1,2}}{t}$.

	In view of Lemma \ref{jl} \ref{jl-3'} and Lemma \ref{jb} \ref{jb-2}, \ref{jb-3}, one has
	$$k_{1,2}=p_{(2,1),(1,2)}^{(1,3)}+p_{(2,1),(1,3)}^{(1,3)}+p_{(2,1),(2,2)}^{(1,3)}=p_{(1,2),(2,1)}^{(1,3)}+p_{(1,2),(1,3)}^{(1,3)}+p_{(2,1),(2,2)}^{(1,3)}.$$ Since $p_{(2,1),(2,2)}^{(1,3)}=\frac{k_{1,2}}{t}$, we get
	\begin{align}
	(1-\frac{1}{t})k_{1,2}=p_{(1,2),(2,1)}^{(1,3)}+p_{(1,2),(1,3)}^{(1,3)}.\label{jiegou-1}\tag{7.4}
	\end{align}
	In view of Lemma \ref{jl} \ref{jl-6'}, one obtains
	$k_{1,2}=p_{(1,2),(1,3)}^{(1,3)}+p_{(1,2),(2,1)}^{(1,3)}+p_{(1,2),(3,1)}^{(1,3)}+p_{(1,2),(1,2)}^{(1,3)}+p_{(1,2),(2,2)}^{(1,3)}.$
Since $p_{(1,2),(2,2)}^{(1,3)}=\frac{k_{1,2}}{t}$, we get
	\begin{align}
	(1-\frac{1}{t})k_{1,2}=p_{(1,2),(1,3)}^{(1,3)}+p_{(1,2),(2,1)}^{(1,3)}+p_{(1,2),(3,1)}^{(1,3)}+p_{(1,2),(1,2)}^{(1,3)}.\label{jiegou-2}\tag{7.5}
	\end{align}
	By \eqref{jiegou-1} and \eqref{jiegou-2}, one has  $p_{(1,2),(3,1)}^{(1,3)}=0$. It follows from Lemma \ref{jb} \ref{jb-2}  that $p_{(1,3),(1,3)}^{(1,2)}=0$. By Lemma \ref{jl} \ref{jl-3'},  we have $\Gamma_{1,3}^2\subseteq \{ \Gamma_{1,3},\Gamma_{2,2}\}$.
	
	By   Lemma \ref{jb} \ref{jb-2} and \ref{jb-3}, one gets $k_{1,3}=p_{(3,1),(1,3)}^{(1,3)}+p_{(3,1),(2,2)}^{(1,3)}=p_{(1,3),(1,3)}^{(1,3)}+p_{(3,1),(2,2)}^{(1,3)}$. \eqref{5-1} implies $(1-\frac{1}{t})k_{1,3}=p_{(3,1),(1,3)}^{(1,3)}=p_{(1,3),(1,3)}^{(1,3)}$.
	In view of Lemma \ref{jl} \ref{jl-2'}, we have $$k_{1,3}=p_{(1,3),(1,2)}^{(1,3)}+p_{(1,3),(2,1)}^{(1,3)}+p_{(1,3),(1,3)}^{(1,3)}+p_{(1,3),(3,1)}^{(1,3)}+p_{(1,3),(2,2)}^{(1,3)}+1.$$ Since $p_{(1,3),(2,2)}^{(1,3)}=\frac{k_{1,3}}{t}-1$, one obtains
	\begin{align}
	(1-\frac{1}{t})k_{1,3}&=p_{(1,3),(1,2)}^{(1,3)}+p_{(1,3),(2,1)}^{(1,3)}+p_{(1,3),(1,3)}^{(1,3)}+p_{(1,3),(3,1)}^{(1,3)}\geq2(1-\frac{1}{t})k_{1,3}.\nonumber
	\end{align}
Then $t=1$ and $V$ is the unique partite set such that $\Gamma_{1,3}(x)\cap V\ne \emptyset$. In view of  \eqref{5-1'}, we have  $|\Gamma_{1,q}(x)\cap V|=|\Gamma_{q,1}(x)\cap V|=k_{1,q}$ for $q\in \{2,3\}$. Then
 $ \Gamma_{1,2}(x)\cup \Gamma_{2,1}(x)\cup \Gamma_{1,3}(x)\cup \Gamma_{3,1}(x)\subseteq V$.  Since $3\in T$, there exists a circuit of length $3$ containing an arc of type $(1,2)$. By Lemma \ref{jichu0},  $\Gamma$ has at least three distinct partite sets. Let $V'$ be a partite set with $x\notin V'$ and $V'\ne V$. Since $ \Gamma_{1,2}(x)\cup \Gamma_{2,1}(x)\cup \Gamma_{1,3}(x)\cup \Gamma_{3,1}(x)\subseteq V$, from Lemma \ref{jichu0}, one has $V'=\emptyset$, a contradiction.

\begin{stepp}\label{chaifen}
	Show that $p_{(2,1),(2,2)}^{(1,3)}=\frac{k_{1,2}}{t}$, $p_{(1,3),(2,2)}^{(1,3)}=\frac{k_{1,3}}{t}-1$ and $p_{(1,2),(2,2)}^{(1,3)}=0$.
\end{stepp}
	Pick a partite set $V$ such that $\Gamma_{1,3}(x)\cap V\ne \emptyset$.
Since $\Gamma_{2,2}\in \Gamma_{1,2}\Gamma_{1,3}$, from Lemma \ref{le216fj}, one has $|\Gamma_{2,1}(x)\cap V|=p_{(2,1),(2,2)}^{(1,3)}=\frac{k_{1,2}}{t}$. Since $V$ was arbitrary and $|\Gamma_{2,1}(x)\cap V|=\frac{k_{1,2}}{t}$, there are exactly $t$ partite sets having nonempty intersection with $\Gamma_{2,1}(x)$.  Since $\Gamma_{2,2}\in \Gamma_{1,2}\Gamma_{1,3}$, from Lemma \ref{le216fj}, one gets $|\Gamma_{1,3}(x)\cap V|=p_{(1,3),(2,2)}^{(2,1)}=\frac{k_{1,3}}{t}$. By Lemmas \ref{main3''} and  \ref{changyong3}, we get $p_{(1,3),(2,2)}^{(1,3)}=\frac{k_{1,3}}{t}-1$.  
	
	By Lemma \ref{jl} \ref{jl-3'} and Step \ref{jiegou}, we get $\Gamma_{1,3}^2\subseteq \{\Gamma_{1,2},\Gamma_{1,3}\}$. Since $\Gamma_{1,3}(x)\cap V\ne \emptyset$, from Lemma \ref{jiaojik},  one has $\Gamma_{3,1}(x)\cap V=\emptyset$. Assume that there are exactly $l$ partite sets of $\Gamma$ having nonempty intersection with $\Gamma_{3,1}(x)$. Pick a partite set $V'$ such that $\Gamma_{3,1}(x)\cap V'\ne \emptyset$.  Since $\Gamma_{2,2}\in \Gamma_{1,2}\Gamma_{1,3}$, from Lemma \ref{le216fj}, we obtain  $|\Gamma_{1,2}(x)\cap V'|=p_{(1,2),(2,2)}^{(3,1)}=\frac{k_{1,2}}{l}$.
	Since $V'$ was arbitrary,  there are exactly $l$ partite sets having nonempty intersection with $\Gamma_{1,2}(x)$. Since $\Gamma_{3,1}(x)\cap V=\emptyset$, we get $\Gamma_{1,2}(x)\cap V=\emptyset$.
	By Lemma \ref{pssize}, one has
	$|V|=\frac{k_{1,2}+k_{1,3}}{t}.$ In view of Lemmas \ref{jichu0} and  \ref{changyong3}, we obtain $|V|=k_{2,2}+1$. Then $k_{2,2}+1=\frac{k_{1,2}+k_{1,3}}{t}$.  In view of Lemma \ref{jb} \ref{jb-2} and Step \ref{jiegou}, one gets $p_{(1,3),(2,2)}^{(3,1)}=p_{(3,1),(3,1)}^{(2,2)}=p_{(1,3),(1,3)}^{(2,2)}=0$.  By the first statement in Lemma \ref{jichu2}, we have $\Gamma_{1,3}\Gamma_{2,2}\subseteq\{\Gamma_{1,2},\Gamma_{2,1},\Gamma_{1,3}\}$.
	In view of  Lemma \ref{jb} \ref{jb-2} and	\ref{jb-3}, one has
	\begin{align}
	\frac{k_{1,2}+k_{1,3}}{t}-1=k_{2,2}=p_{(2,2),(1,2)}^{(1,3)}+p_{(2,2),(2,1)}^{(1,3)}+p_{(2,2),(1,3)}^{(1,3)}.\nonumber
	\end{align}
 Since $p_{(2,1),(2,2)}^{(1,3)}=\frac{k_{1,2}}{t}$ and $p_{(1,3),(2,2)}^{(1,3)}=\frac{k_{1,3}}{t}-1$, we get  $p_{(1,2),(2,2)}^{(1,3)}=p_{(2,2),(1,2)}^{(1,3)}=0$. 

\begin{stepp}\label{chaifen2}
	Show that $t=1$.
\end{stepp}
	In view of Lemma \ref{jb} \ref{jb-2} and Steps \ref{jiegou}, \ref{chaifen}, one gets $p_{(1,3),(2,2)}^{(3,1)}=p_{(1,3),(2,2)}^{(1,2)}=0$.  By the first statement in Lemma \ref{jichu2}, we have $\Gamma_{1,3}\Gamma_{2,2}\subseteq\{\Gamma_{2,1},\Gamma_{1,3}\}$.
	
	 By setting $i=l=j=(1,3)$ and $m=(2,2)$ in Lemma \ref{jb} \ref{jb-4},  one obtains
	\begin{align}
	&p_{(1,3),(1,3)}^{(1,3)}p_{(1,3),(2,2)}^{(1,3)}=p_{(1,3),(2,2)}^{(1,3)}p_{(1,3),(1,3)}^{(1,3)}+p_{(1,3),(2,2)}^{(2,1)}p_{(2,1),(1,3)}^{(1,3)}\nonumber
	\end{align}
	from Lemma \ref{jb} \ref{jb-2} and Lemma \ref{jl} \ref{jl-3'}, which implies that $p_{(1,3),(2,2)}^{(2,1)}p_{(2,1),(1,3)}^{(1,3)}=0$. By Lemma \ref{jb} \ref{jb-2} and Step \ref{chaifen}, we get $p_{(1,3),(2,2)}^{(2,1)}=\frac{k_{1,3}}{t}$. It follows that $p_{(2,1),(1,3)}^{(1,3)}=0$. It follows from Lemma \ref{jb} \ref{jb-2} that  $p_{(1,2),(1,3)}^{(1,3)}=p_{(1,3),(3,1)}^{(2,1)}=p_{(3,1),(1,3)}^{(1,2)}=0$. Lemma \ref{jl} \ref{jl-2'} and \ref{jl-3'} imply $\Gamma_{1,3}\Gamma_{3,1}\subseteq \{\Gamma_{0,0}, \Gamma_{1,3},\Gamma_{3,1},\Gamma_{2,2}\}$ and $\Gamma_{1,2}\Gamma_{1,3}\subseteq \{\Gamma_{1,2},\Gamma_{2,2}\}$.
	
	By Step \ref{chaifen}  and the first statement in Lemma \ref{jichu2}, we have  $\Gamma_{1,2}\Gamma_{2,2}\subseteq\{\Gamma_{1,2},\Gamma_{2,1},\Gamma_{3,1}\}$.	By setting $i=l^*=(1,3)$,  $j=(1,2)$ and $m=(2,2)$ in Lemma \ref{jb} \ref{jb-4}, one obtains	$p_{(1,3),(3,1)}^{(3,1)}p_{(3,1),(2,2)}^{(1,2)}=0$
	since $\Gamma_{1,3}\Gamma_{2,2}\subseteq\{\Gamma_{2,1},\Gamma_{1,3}\}$ and $\Gamma_{1,2}\Gamma_{1,3}\subseteq \{\Gamma_{1,2},\Gamma_{2,2}\}$. Since $p_{(3,1),(2,2)}^{(1,2)}=p_{(1,3),(2,2)}^{(2,1)}=\frac{k_{1,3}}{t}$, we have $p_{(1,3),(3,1)}^{(3,1)}=0$, which implies $\Gamma_{1,3}\Gamma_{3,1}\subseteq \{\Gamma_{0,0},\Gamma_{2,2}\}$. By Lemma \ref{jb} \ref{jb-2}, \ref{jb-3} and Step \ref{chaifen}, one gets $k_{1,3}=p_{(1,3),(0,0)}^{(1,3)}+p_{(1,3),(2,2)}^{(1,3)}=\frac{k_{1,3}}{t}$, and so $t=1$.

\begin{stepp}\label{lemma7.6s}
  	Show that $\Gamma_{1,3}^2=\{\Gamma_{1,2}\}$ and $\Gamma_{1,2}\Gamma_{1,3}=\{\Gamma_{2,2}\}$.
\end{stepp}

 By Steps \ref{chaifen}  and \ref{chaifen2}, one has $p_{(1,3),(2,2)}^{(1,3)}=k_{1,3}-1$. In view of Lemma \ref{jb} \ref{jb-2}, \ref{jb-3} and Lemma   \ref{jl} \ref{jl-2'}, we get $$k_{1,3}=2p_{(1,3),(1,2)}^{(1,3)}+2p_{(1,3),(1,3)}^{(1,3)}+p_{(1,3),(2,2)}^{(1,3)}+1,$$ which implies that $p_{(1,3),(1,2)}^{(1,3)}=p_{(1,3),(1,3)}^{(1,3)}=0$.
By Lemma \ref{jl} \ref{jl-3'} and Step \ref{jiegou}, we have  $\Gamma_{1,3}^2=\{\Gamma_{1,2}\}$.


	
Since $p_{(1,3),(1,2)}^{(1,3)}=0$, from  Lemma \ref{jl} \ref{jl-3'}, we have $\Gamma_{1,2}\Gamma_{1,3}\subseteq\{\Gamma_{1,2},\Gamma_{2,2}\}$. In view of  Lemma \ref{jb} \ref{jb-2} and \ref{jb-3}, we get $k_{1,2}=p_{(2,1),(1,2)}^{(1,3)}+p_{(2,1),(2,2)}^{(1,3)}$. By Steps \ref{chaifen}  and  \ref{chaifen2}, one has  $p_{(2,1),(2,2)}^{(1,3)}=k_{1,2}$, which implies $p_{(1,2),(1,3)}^{(1,2)}=p_{(2,1),(1,2)}^{(1,3)}=0$. Thus, $\Gamma_{1,2}\Gamma_{1,3}=\{\Gamma_{2,2}\}$.

\begin{stepp}
Show that $\Gamma$ is isomorphic to  the digraph in Theorem \ref{xushumain4} \ref{main1-fu2} with $n=1$.
\end{stepp}
In view of Lemma \ref{jb} \ref{jb-2} and Steps \ref{jiegou},  \ref{chaifen}, one gets $p_{(1,3),(2,2)}^{(3,1)}=p_{(1,3),(2,2)}^{(1,2)}=0$.  By the first statement in Lemma \ref{jichu2}, we have $\Gamma_{1,3}\Gamma_{2,2}\subseteq\{\Gamma_{2,1},\Gamma_{1,3}\}$. In view of  Lemma \ref{jb} \ref{jb-2} and	\ref{jb-3}, one has
$k_{2,2}=p_{(2,2),(2,1)}^{(1,3)}+p_{(2,2),(1,3)}^{(1,3)}.$
Steps \ref{chaifen} and \ref{chaifen2} imply that
\begin{align}
k_{1,2}+k_{1,3}-1=k_{2,2}.\label{lemma7.6s-7}\tag{7.6}
\end{align}

	By Step \ref{chaifen}, we have $p_{(1,2),(2,2)}^{(1,3)}=0$. In view of  Lemma \ref{jb} \ref{jb-2}, \ref{jb-3} and Lemma \ref{jl} \ref{jl-6'}, one obtains
	\begin{align}
	k_{1,2}=p_{(1,2),(1,2)}^{(1,3)}+p_{(1,2),(2,1)}^{(1,3)}+p_{(1,2),(1,3)}^{(1,3)}+p_{(1,2),(3,1)}^{(1,3)}.\label{1614}\tag{7.7}
	\end{align}
	 In view of Step \ref{lemma7.6s} and Lemma \ref{jb} \ref{jb-2}, \ref{jb-3}, we have $k_{1,3}=p_{(3,1),(1,2)}^{(1,3)}$ and $p_{(1,2),(2,1)}^{(1,3)}=p_{(1,2),(1,3)}^{(1,2)}=p_{(1,2),(1,3)}^{(1,3)}=0$. By substituting $k_{1,3}=p_{(3,1),(1,2)}^{(1,3)}$  and $p_{(1,2),(2,1)}^{(1,3)}=p_{(1,2),(1,3)}^{(1,3)}=0$
	into \eqref{1614}, 	
	one has
	\begin{align}
	p_{(1,2),(1,2)}^{(1,3)}=k_{1,2}-k_{1,3}.\label{lemma7.6s-4}\tag{7.8}
	\end{align}

	By setting $i=l=j=(1,3)$ and $m=(1,2)$ in Lemma \ref{jb} \ref{jb-4}, from  Step \ref{lemma7.6s}, we get
\begin{align}
p_{(1,3),(1,3)}^{(1,2)}p_{(1,2),(1,2)}^{(1,3)}=p_{(1,2),(1,3)}^{(2,2)}p_{(2,2),(1,3)}^{(1,3)}.\label{t34xjia-1}\tag{7.9}
	\end{align}
In view of Lemma \ref{jb} \ref{jb-1} and Step \ref{lemma7.6s}, one gets $p_{(1,3),(1,3)}^{(1,2)}=\frac{k_{1,3}^2}{k_{1,2}}$ and  $p_{(1,2),(1,3)}^{(2,2)}=\frac{k_{1,2}k_{1,3}}{k_{2,2}}.$ By Steps \ref{chaifen}  and  \ref{chaifen2}, we have $p_{(1,3),(2,2)}^{(1,3)}=k_{1,3}-1$.
\eqref{lemma7.6s-4} and \eqref{t34xjia-1} imply $$\frac{k_{1,3}(k_{1,2}-k_{1,3})}{k_{1,2}}=\frac{k_{1,2}(k_{1,3}-1)}{k_{2,2}}.$$ By \eqref{lemma7.6s-7}, we obtain $$k_{1,3}(k_{1,2}-k_{1,3})(k_{1,2}+k_{1,3}-1)=k_{1,2}^2(k_{1,3}-1),$$ and so $k_{1,3}^3-k_{1,2}^2-k_{1,3}^2+k_{1,2}k_{1,3}=0.$
Let $s=\frac{k_{1,2}}{k_{1,3}}$. It follows that $k_{1,3}=s^2-s+1$.
By  \eqref{lemma7.6s-4},	we get $s\geq 1$. Then there exist two coprime positive integers $p$ and $q$ such that $s=\frac{p}{q}$ with $p\geq q\geq 1$. Since $p_{(1,3),(1,3)}^{(1,2)}=\frac{k_{1,3}^2}{k_{1,2}}$, one gets $p_{(1,3),(1,3)}^{(1,2)}=\frac{k_{1,3}}{s}=\frac{s^2-s+1}{s}\in \mathbb{Z}$, and so $\frac{p^2-pq+q^2}{pq}\in \mathbb{Z}$. Then $p\mid q^2$. Since $p$ and $q$ are coprime, one obtains  $p=1$, which implies $q=1$. Thus, $s=1$ and $k_{1,2}=k_{1,3}=1$. Since $k_{2,2}=k_{1,2}+k_{1,3}-1$, we have $k_{2,2}=1$. Lemma \ref{changyong3} implies $|V\Gamma|=6$. By \cite[Theorem 1.1]{KSW03}, the desired result follows.
\end{proof}

\begin{prop}\label{t34xjia}
	The digraph $\Gamma$ is isomorphic to one of the digraphs in Theorem \ref{xushumain4} \ref{main1-fu2} and \ref{main1-fu0}.
\end{prop}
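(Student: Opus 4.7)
My plan is to split into two cases according to whether $(3,3) \in \tilde{\partial}(\Gamma)$ or not, and reduce the second case to the first via the quotient digraph construction of Lemma \ref{tdxw1}.

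First I would handle the case $(3,3) \notin \tilde{\partial}(\Gamma)$. Then $\Gamma_{2,2}$ either lies in $\Gamma_{1,2}\Gamma_{1,3}$ or it does not, and these two sub-cases are exactly what Lemmas \ref{zuihouxli} and \ref{t34xj2*dier} dispose of, giving $\Gamma$ isomorphic to ${\rm Cay}(\mathbb{Z}_6,\{1,2\})$ (the $n=1$ instance of Theorem \ref{xushumain4} \ref{main1-fu2}) or to $\Sigma\circ C_4$ for a semicomplete weakly distance-regular digraph $\Sigma$ of girth $3$ (the $n=1$ instance of Theorem \ref{xushumain4} \ref{main1-fu0}), respectively. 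So the case $(3,3) \notin \tilde{\partial}(\Gamma)$ is already done.

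Next I would handle the case $(3,3) \in \tilde{\partial}(\Gamma)$. Since $T=\{3,4\}$, Lemma \ref{changyong3} gives
\[
\tilde{\partial}(\Gamma)\setminus \{(1,q),(q,1) \mid q+1 \in T\}\subseteq \{(0,0),(2,2),(3,3)\},
\]
which is exactly the hypothesis of Lemma \ref{tdxw1} with $s=3$. Applying that lemma, $\Gamma\simeq \Sigma\circ \overline{K}_{k_{3,3}+1}$, where $\Sigma$ is a semicomplete multipartite commutative weakly distance-regular digraph with $\tilde{\partial}(\Sigma)=\tilde{\partial}(\Gamma)\setminus\{(3,3)\}$. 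In particular $(3,3)\notin \tilde{\partial}(\Sigma)$ while $(1,2),(1,3)\in \tilde{\partial}(\Sigma)$, so $\Sigma$ again has $T_\Sigma=\{3,4\}$. Hence the first case applies to $\Sigma$, giving $\Sigma\simeq {\rm Cay}(\mathbb{Z}_6,\{1,2\})$ or $\Sigma\simeq \Sigma'\circ C_4$ for some semicomplete weakly distance-regular digraph $\Sigma'$ of girth $3$. Setting $n=k_{3,3}+1\geq 2$, we obtain
\[
\Gamma\simeq {\rm Cay}(\mathbb{Z}_6,\{1,2\})\circ \overline{K}_n\qquad \text{or}\qquad \Gamma\simeq (\Sigma'\circ C_4)\circ \overline{K}_n,
\]
which are precisely the conclusions of Theorem \ref{xushumain4} \ref{main1-fu2} and \ref{main1-fu0}.

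No single step is a serious obstacle: the two technical sub-cases are disposed of by the preceding lemmas, and verifying the hypotheses of Lemma \ref{tdxw1} is immediate from Lemma \ref{changyong3}. The only point requiring a brief sanity check is that $T_\Sigma=\{3,4\}$ is preserved under the quotient (so that Case 1 may legitimately be invoked on $\Sigma$), which follows at once from $\tilde{\partial}(\Sigma)=\tilde{\partial}(\Gamma)\setminus\{(3,3)\}$. Thus the entire proof of Proposition \ref{t34xjia} amounts to the case split above together with an application of Lemma \ref{tdxw1}.
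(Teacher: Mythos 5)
Your proposal is correct and follows essentially the same route as the paper: dispose of the case $(3,3)\notin\tilde{\partial}(\Gamma)$ via Lemmas \ref{t34xj2*dier} and \ref{zuihouxli}, and reduce the case $(3,3)\in\tilde{\partial}(\Gamma)$ to it by quotienting out $F=\{\Gamma_{0,0},\Gamma_{3,3}\}$ via Lemma \ref{tdxw1}. Your explicit check that $T_\Sigma=\{3,4\}$ is preserved under the quotient is a point the paper leaves implicit, but otherwise the arguments coincide.
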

\begin{proof}
If $(3,3)\notin \tilde{\partial}(\Gamma)$, from Lemmas \ref{t34xj2*dier} and \ref{zuihouxli}, then $\Gamma$ is isomorphic to one of the digraphs in Theorem \ref{xushumain4} \ref{main1-fu2}   and \ref{main1-fu0} with $n=1$.
Now suppose $(3,3)\in \tilde{\partial}(\Gamma)$.
			By Lemma \ref{changyong3}, we get $\tilde{\partial}(\Gamma)\subseteq\{(0,0),(1,2),(2,1),(1,3),(3,1),(2,2),(3,3)\}$.
			Let  $F=\{\Gamma_{0,0},\Gamma_{3,3}\}$.  By Lemma \ref{changyong2},  $F$ is closed. Denoted  $\hat{\Gamma}:=\Gamma/F$.
			By Lemma \ref{tdxw1},  $\Gamma$ is  isomorphic to $\hat{\Gamma}\circ \overline{{K}}_{k_{3,3}+1}$, where $\hat{\Gamma}$ is a semicomplete multipartite commutative  weakly distance-regular digraph with $\tilde{\partial}(\hat{\Gamma})= \tilde{\partial}(\Gamma)\backslash \{(3,3)\}$. By Lemmas \ref{t34xj2*dier} and \ref{zuihouxli},
				$\Gamma$ is isomorphic to one of the digraphs in Theorem \ref{xushumain4} \ref{main1-fu2}   and \ref{main1-fu0} with $n\geq 2$.
\end{proof}

\section{$T=\{2,3\}$}
In  this section, $\Gamma$ always denotes  a semicomplete multipartite commutative weakly distance-regular digraph with $T=\{2,3\}$.  We shall prove that $\Gamma$ is  isomorphic to  one of the digraphs in Theorem \ref{xushumain4} \ref{main1-fu7} and \ref{main1-fu1}.

\begin{lemma}\label{g21}
We have $\tilde{\partial}(\Gamma)=\{(0,0),(1,1),(1,2),(2,1),(2,2)\}$.
%
\end{lemma}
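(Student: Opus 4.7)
The plan is to prove the inclusion in both directions. For the upper bound, I would combine three general restrictions already established in Section~4. Since $T=\{2,3\}$, the only arc types of $\Gamma$ are $(1,1)$ and $(1,2)$, so all elements of $\tilde{\partial}(\Gamma)$ of the form $(1,s)$ or $(s,1)$ lie in $\{(1,1),(1,2),(2,1)\}$. Lemma~\ref{changyong} rules out $(2,s)$ and $(s,2)$ for $s\ge 3$, and Lemma~\ref{zuidaarc} (applicable because $2\in T$) rules out every $(s,t)$ with $s,t\ge 3$. Putting these together yields
\[
\tilde{\partial}(\Gamma)\subseteq\{(0,0),(1,1),(1,2),(2,1),(2,2)\}.
\]

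For the reverse inclusion, note first that $(0,0)$, $(1,1)$, $(1,2)$, and $(2,1)$ automatically belong to $\tilde{\partial}(\Gamma)$, since $1\in\tilde{\partial}(\Gamma)$-entries come precisely from the types in $T=\{2,3\}$. To produce $(2,2)$, I would use the fact that $\Gamma$ is semicomplete multipartite: by Lemma~\ref{jichu} the underlying graph is $K^k_m$ with $m,k\ge 2$, so there exist distinct vertices $x,y$ lying in a common partite set. By Lemma~\ref{jichu0}, $y\notin N^+(x)\cup N^-(x)$, hence $\partial(x,y)\ge 2$ and $\partial(y,x)\ge 2$. Thus $\tilde{\partial}(x,y)=(s,t)$ with $s,t\ge 2$, and combined with the upper bound this forces $\tilde{\partial}(x,y)=(2,2)$.

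There is no serious obstacle here; the lemma is an immediate consolidation of the general restrictions proved in Section~5 together with the basic structural fact that same-partite-set pairs contribute a two-way distance with both coordinates at least $2$. The proof should be just a few lines.
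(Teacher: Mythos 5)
Your proof is correct and follows essentially the same route as the paper, which simply cites Lemmas \ref{changyong} and \ref{zuidaarc} for the containment; your additional explicit verification of the reverse inclusion (using $T=\{2,3\}$ for the arc types and a same-partite-set pair for $(2,2)$) fills in details the paper leaves implicit.
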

\begin{proof}
	By Lemmas \ref{changyong} and \ref{zuidaarc}, the desired result follows.
\end{proof}

Pick a vertex  $x\in V\Gamma$.  Assume that there are exactly $t$ partite sets having nonempty intersection with $\Gamma_{1,2}(x)$.

\begin{lemma}\label{g2jiegou}
	Let  $\Gamma_{2,2}\in \Gamma_{1,2}\Gamma_{1,1}$.  Then the following hold:
	\begin{enumerate}
		\item\label{g2jiegou-6}	$p_{(1,2),(2,2)}^{(1,2)}=\frac{k_{1,2}}{t}-1$;
		
			\item\label{g2jiegou-7}	$p_{(2,1),(2,2)}^{(1,2)}=\frac{k_{1,2}}{t}$.
	\end{enumerate}
\end{lemma}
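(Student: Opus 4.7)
The core observation powering both equalities is that, by Lemma \ref{g21}, $\tilde{\partial}(\Gamma) = \{(0,0),(1,1),(1,2),(2,1),(2,2)\}$, so $(2,2)$ is the unique pair $(h,l) \in \tilde{\partial}(\Gamma)$ with $h,l > 1$. Consequently the sums in Lemmas \ref{main3''} and \ref{le216fj} collapse to single terms. Throughout, write $V_z$ for the partite set containing $z$.

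For (i), the plan is to apply Lemma \ref{main3''} with $\wz{i} = (1,2)$; the singleton collapse mentioned above gives
\[
p_{(1,2),(2,2)}^{(1,2)} = |\Gamma_{1,2}(x) \cap V| - 1
\]
for every partite set $V$ meeting $\Gamma_{1,2}(x)$. Since the left side is independent of $V$, every such intersection has a common cardinality $c$; summing over the $t$ partite sets that meet $\Gamma_{1,2}(x)$ gives $tc = k_{1,2}$, whence $c = \frac{k_{1,2}}{t}$ and (i) follows.

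For (ii), first note that Lemma \ref{jichu0} combined with \ref{g21} yields $\Gamma_{2,2}(z) = V_z \setminus \{z\}$ for every vertex $z$. Hence, for any fixed $(x,z) \in \Gamma_{1,2}$,
\[
p_{(2,1),(2,2)}^{(1,2)} = |\Gamma_{2,1}(x) \cap \Gamma_{2,2}(z)| = |\Gamma_{2,1}(x) \cap V_z|,
\]
using $z \notin \Gamma_{2,1}(x)$. So it suffices to show that $|\Gamma_{2,1}(x) \cap V_z| = \frac{k_{1,2}}{t}$. The hypothesis $\Gamma_{2,2} \in \Gamma_{1,2}\Gamma_{1,1}$ gives $p_{(1,2),(1,1)}^{(2,2)} > 0$, so I would invoke Lemma \ref{le216fj} with $(p,q) = (3,2)$ in both orderings of $\{\wz{i},\wz{j}\} = \{(1,2),(1,1)\}$, and with $(p,q) = (2,3)$ in both orderings of $\{\wz{i},\wz{j}\} = \{(1,1),(2,1)\}$. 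This produces four formulas of the shape $|\Gamma_{\wz{j}}(x) \cap V| = \frac{k_{\wz{j}}}{\ast}$. Because $k_{1,1}, k_{1,2} > 0$ (as $2,3 \in T$), each such formula forces the relevant family of partite sets to be contained in another; the four containments close up, so the families of partite sets meeting $\Gamma_{1,1}(x)$, $\Gamma_{1,2}(x)$, and $\Gamma_{2,1}(x)$ coincide, with common size $t$. Applying Lemma \ref{main3''} now with $\wz{i} = (2,1)$ and repeating the summing argument from (i) gives $|\Gamma_{2,1}(x) \cap V| = \frac{k_{1,2}}{t}$ for every partite set $V$ meeting $\Gamma_{2,1}(x)$, and $V_z$ is such a $V$ by the coincidence just established.

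The only real obstacle is the bookkeeping in the third paragraph: one must apply Lemma \ref{le216fj} four times, carefully tracking which family of intersecting partite sets each application constrains, in order to conclude that the three families genuinely coincide and their common size is $t$.
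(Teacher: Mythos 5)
Your proposal is correct, and for both parts it is a little more direct than the paper's own argument. For (i) you bypass the hypothesis entirely: Lemma \ref{main3''} plus the singleton collapse from Lemma \ref{g21} shows all nonempty intersections $|\Gamma_{1,2}(x)\cap V|$ share a common value, and summing over the $t$ partite sets gives $\frac{k_{1,2}}{t}$; the paper instead first establishes $|\Gamma_{1,2}(x)\cap V|=\frac{k_{1,2}}{t}$ by a chain of applications of Lemma \ref{le216fj} through $\Gamma_{1,1}(x)$. For (ii) your preparatory bookkeeping (the four applications of Lemma \ref{le216fj} showing the families of partite sets meeting $\Gamma_{1,1}(x)$, $\Gamma_{1,2}(x)$, $\Gamma_{2,1}(x)$ coincide) matches the paper's, but your final step differs genuinely: you observe that $\Gamma_{2,2}(z)=V_z\setminus\{z\}$ and hence read off $p_{(2,1),(2,2)}^{(1,2)}=|\Gamma_{2,1}(x)\cap V_z|$ directly, whereas the paper computes $|V|=\frac{2k_{1,2}+k_{1,1}}{t}=k_{2,2}+1$ via Lemma \ref{pssize}, restricts $\Gamma_{1,2}\Gamma_{2,2}$ to $\{\Gamma_{1,1},\Gamma_{1,2},\Gamma_{2,1}\}$ via Lemma \ref{jichu2}, and solves the valency identity $k_{2,2}=p_{(2,2),(1,1)}^{(1,2)}+p_{(2,2),(1,2)}^{(1,2)}+p_{(2,2),(2,1)}^{(1,2)}$ for the remaining term. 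Your route avoids computing $|V|$ and $k_{2,2}$ altogether (it is essentially the counting argument inside the proof of Lemma \ref{le216fj}, applied once more by hand outside its stated hypotheses); the paper's route produces the identity $|V|=k_{2,2}+1$ as a byproduct, which it reuses later in the section. Both are sound; your closing appeal to Lemma \ref{main3''} with $\wz{i}=(2,1)$ is redundant, since the third application of Lemma \ref{le216fj} already gives $|\Gamma_{2,1}(x)\cap V|=\frac{k_{1,2}}{t}$ for every $V$ meeting $\Gamma_{1,1}(x)$, and $V_z$ is such a set.
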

\begin{proof}
 Pick a partite set $V$ such that $\Gamma_{1,2}(x)\cap V\ne \emptyset$.   	Since $\Gamma_{2,2}\in \Gamma_{1,2}\Gamma_{1,1}$, from Lemma \ref{le216fj}, we have $|\Gamma_{1,1}(x)\cap V|=p_{(1,1),(2,2)}^{(1,2)}=\frac{k_{1,1}}{t}$.
	
	Since $V$ was arbitrary,  there are exactly $t$ partite sets having nonempty intersection with $\Gamma_{1,1}(x)$. By Lemma \ref{le216fj}, one obtains  $|\Gamma_{2,1}(x)\cap V|=p_{(2,1),(2,2)}^{(1,1)}=p_{(1,2),(2,2)}^{(1,1)}=|\Gamma_{1,2}(x)\cap V|=\frac{k_{1,2}}{t}$.
	
 By Lemma \ref{pssize}, we have $|V|=\frac{2k_{1,2}+k_{1,1}}{t}$. Lemma \ref{jichu0} implies $|V|=1+\sum_{h,l>1}k_{h,l}$. In view of Lemma \ref{g21}, we obtain $|V|=\frac{2k_{1,2}+k_{1,1}}{t}=k_{2,2}+1$.
	
	(i) Since $|\Gamma_{1,2}(x)\cap V|=\frac{k_{1,2}}{t}$,  from Lemmas \ref{main3''} and  \ref{g21}, we get $p_{(1,2),(2,2)}^{(1,2)}=\frac{k_{1,2}}{t}-1$. Thus, \ref{g2jiegou-6} holds.
	
	(ii) By the first statement in Lemma \ref{jichu2}, we have  $\Gamma_{1,2}\Gamma_{2,2}\subseteq\{\Gamma_{1,1},\Gamma_{1,2},\Gamma_{2,1}\}$. In view of Lemma \ref{jb} \ref{jb-2} and \ref{jb-3}, one has
	\begin{align}
	\frac{2k_{1,2}+k_{1,1}}{t}-1=k_{2,2}=p_{(2,2),(1,2)}^{(1,2)}+p_{(2,2),(2,1)}^{(1,2)}+p_{(2,2),(1,1)}^{(1,2)}.\label{2.3.1}\tag{8.1}
	\end{align}
 In view of  \ref{g2jiegou-6}, we get $p_{(2,2),(1,2)}^{(1,2)}=\frac{k_{1,2}}{t}-1$.
	By substituting $p_{(2,2),(1,2)}^{(1,2)}=\frac{k_{1,2}}{t}-1$ and $p_{(2,2),(1,1)}^{(1,2)}=\frac{k_{1,1}}{t}$
	into \eqref{2.3.1}, one obtains $p_{(2,1),(2,2)}^{(1,2)}=\frac{k_{1,2}}{t}$. Thus, \ref{g2jiegou-7} holds.
\end{proof}

\begin{lemma}\label{g2zzjiegou}
	Let  $\Gamma_{2,2}\in \Gamma_{1,2}\Gamma_{1,1}$. Then $\Gamma$ is  isomorphic to  one of the digraphs in Theorem \ref{xushumain4} \ref{main1-fu1}.
\end{lemma}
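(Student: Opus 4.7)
The plan is to deduce the lemma from Proposition \ref{main prop doubly} by identifying the attached scheme of $\Gamma$ as a 4-class scheme of the required form, then using the hypothesis $\Gamma_{2,2}\in\Gamma_{1,2}\Gamma_{1,1}$ to exclude Type I.

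By Lemma \ref{g21}, the attached scheme $\mathfrak{X}(\Gamma)$ is the $4$-class non-symmetric association scheme with relations $R_0:=\Gamma_{0,0}$, $R_1:=\Gamma_{1,2}$, $R_2:=\Gamma_{2,1}$, $R_3:=\Gamma_{1,1}$, $R_4:=\Gamma_{2,2}$, and $R_1^\top=R_2$ is the only pair of non-symmetric relations. Since $A\Gamma=\Gamma_{1,1}\cup\Gamma_{1,2}=R_3\cup R_1$, the digraph $(V\Gamma,R_1\cup R_3)$ coincides with $\Gamma$ and is therefore strongly connected. Lemma \ref{jichu0} identifies the pairs in $R_4=\Gamma_{2,2}$ as exactly the pairs of distinct vertices lying in a common partite set, so the connected components of the graph $(V\Gamma,R_4)$ are the partite sets; by Lemma \ref{jichu} there are $k\geq 2$ such components, so $(V\Gamma,R_4)$ is not connected. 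Proposition \ref{main prop doubly} therefore applies and shows that $\Gamma$ is a doubly regular $(k,m)$-team semicomplete multipartite digraph of Type I or Type II (with $k,m\geq 2$ given by Lemma \ref{jichu}).

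It remains to rule out Type I. Assume for contradiction that $\Gamma$ is of Type I, i.e., a coclique extension of a semicomplete digraph, so that for every pair $(i,j)$ with $i\neq j$ one of $V_i\times V_j\subseteq E\Gamma$, $V_i\times V_j\subseteq A\vec{\Gamma}$, $V_j\times V_i\subseteq A\vec{\Gamma}$ holds. By the hypothesis, choose $(x,y)\in\Gamma_{2,2}$ and $z\in P_{(1,2),(1,1)}(x,y)$. Lemma \ref{jichu0} implies that $x,y$ lie in a common partite set $V_i$ and $z$ lies in a different partite set $V_j$. However, $(x,z)\in\Gamma_{1,2}\subseteq A\vec{\Gamma}$ while $(y,z)\in\Gamma_{1,1}\subseteq E\Gamma$ (using that $\Gamma_{1,1}$ is symmetric), so $V_i\times V_j$ meets both $E\Gamma$ and $A\vec{\Gamma}$, contradicting the Type I dichotomy on $(V_i,V_j)$. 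Consequently $\Gamma$ has Type II, which is exactly Theorem \ref{xushumain4} \ref{main1-fu1}.

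The main obstacle is essentially organizational rather than computational: one has to verify carefully that the relations $\Gamma_{\wz i}$ of $\mathfrak{X}(\Gamma)$ match the labelling required by Proposition \ref{main prop doubly} (in particular that the unique non-symmetric pair is $\{\Gamma_{1,2},\Gamma_{2,1}\}$, that the arc set of $\Gamma$ equals $R_1\cup R_3$, and that the "non-edge" relation $R_4$ is precisely $\Gamma_{2,2}$). Once this is in place, Type III is removed by the proposition itself, and the explicit vertex $z\in P_{(1,2),(1,1)}(x,y)$ supplied by the hypothesis furnishes an immediate one-line contradiction to Type I.
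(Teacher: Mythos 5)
Your proof is correct, and its first half (identifying the attached scheme as a non-symmetric imprimitive $4$-class scheme with $R_1=\Gamma_{1,2}$, $R_3=\Gamma_{1,1}$, $R_4=\Gamma_{2,2}$, noting that $(V\Gamma,R_1\cup R_3)=\Gamma$ is strongly connected and that the components of $(V\Gamma,R_4)$ are the partite sets, and then invoking Proposition \ref{main prop doubly}) is exactly what the paper does. Where you diverge is in excluding Type I. The paper never argues against the coclique-extension structure directly; instead it computes, via Lemma \ref{g2jiegou} and the valency identities of Lemma \ref{jb}, that
\[
p_{(1,2),(1,1)}^{(2,1)}+p_{(1,2),(1,2)}^{(2,1)}=p_{(1,2),(1,1)}^{(1,2)}+p_{(1,2),(1,2)}^{(1,2)},
\]
i.e.\ $\beta_1+\beta_2-\alpha_1-\alpha_2=0$ in the notation of Definition \ref{main defin}, and then reads off Type II from the trichotomy of Theorem \ref{main thm}. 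You instead take $(x,y)\in\Gamma_{2,2}$ and $z\in P_{(1,2),(1,1)}(x,y)$ (which exists precisely by the hypothesis $\Gamma_{2,2}\in\Gamma_{1,2}\Gamma_{1,1}$), observe that $x,y$ lie in one partite set while $(x,z)\in A\vec{\Gamma}$ and $(y,z)\in E\Gamma$, and conclude that no single adjacency type can govern $V_i\times V_j$, killing Type I in one line. Your route is shorter and entirely bypasses Lemma \ref{g2jiegou} and the bookkeeping of equations (8.2)--(8.6); the paper's route has the advantage of producing the exact value $\beta_1+\beta_2-\alpha_1-\alpha_2=0$, which pins down Type II positively rather than by elimination, but since Proposition \ref{main prop doubly} has already removed Type III and Types I and II are mutually exclusive, your elimination argument is equally conclusive.
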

\begin{proof}
	In view of Lemma \ref{jb} \ref{jb-2}, \ref{jb-3} and Lemma \ref{g21}, we have
\begin{align}
k_{1,2}&=p_{(2,1),(1,1)}^{(1,2)}+p_{(2,1),(1,2)}^{(1,2)}+p_{(2,1),(2,1)}^{(1,2)}+p_{(2,1),(2,2)}^{(1,2)},\label{8-2}\tag{8.2}\\
k_{1,2}&=p_{(1,2),(1,1)}^{(1,2)}+p_{(1,2),(1,2)}^{(1,2)}+p_{(1,2),(2,1)}^{(1,2)}+p_{(1,2),(2,2)}^{(1,2)}+1.\label{8-3}\tag{8.3}
\end{align}
	By Lemma  \ref{g2jiegou} \ref{g2jiegou-7}, one gets $p_{(2,1),(2,2)}^{(1,2)}=\frac{k_{1,2}}{t}$, which implies that
	\begin{align} p_{(2,1),(1,1)}^{(1,2)}+p_{(2,1),(2,1)}^{(1,2)}=(1-\frac{1}{t})k_{1,2}-p_{(2,1),(1,2)}^{(1,2)}\label{8-3'}\tag{8.4}
	\end{align}
from \eqref{8-2}.
By	Lemma \ref{g2jiegou} \ref{g2jiegou-6}, one has $p_{(1,2),(2,2)}^{(1,2)}=\frac{k_{1,2}}{t}-1$, which implies
	\begin{align} p_{(1,2),(1,1)}^{(1,2)}+p_{(1,2),(1,2)}^{(1,2)}=(1-\frac{1}{t})k_{1,2}-p_{(1,2),(2,1)}^{(1,2)}\label{8-3''}\tag{8.5}
	\end{align}
from \eqref{8-3}. By Lemma \ref{jb} \ref{jb-2} and \eqref{8-3'}, \eqref{8-3''}, we have
\begin{align}
p_{(1,2),(1,1)}^{(2,1)}+p_{(1,2),(1,2)}^{(2,1)}=p_{(2,1),(1,1)}^{(1,2)}+p_{(2,1),(2,1)}^{(1,2)}=p_{(1,2),(1,1)}^{(1,2)}+p_{(1,2),(1,2)}^{(1,2)}.\label{8-3'''}\tag{8.6}
\end{align}

In view of Lemmas \ref{jichu} and \ref{g21},   the graph $(V\Gamma,\Gamma_{2,2})$ is  not connected,	which implies that $(V\Gamma,\{\Gamma_{0,0},\Gamma_{1,2},\Gamma_{2,1},\Gamma_{1,1},\Gamma_{2,2}\})$ is a non-symmetric impritive $4$-class association scheme.
 By Proposition  \ref{main prop doubly},  $\Gamma$ is a  doubly regular $(k,m)$-team semicomplete multipartite digraph of Type I or Type II.
By \eqref{8-3'''} and  Theorem \ref{main thm}, $\Gamma$ has Type II.
\end{proof}

\begin{lemma}\label{g23jiegouxj1}
 Let  $\Gamma_{2,2}\in \Gamma_{1,2}^2$.  Then $\Gamma$ is  isomorphic to  one of the digraphs in Theorem \ref{xushumain4} \ref{main1-fu1}.
\end{lemma}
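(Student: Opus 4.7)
The plan is to imitate the proof of Lemma~\ref{g2zzjiegou}: first identify $\Gamma$ as a doubly regular $(k,m)$-team semicomplete multipartite digraph of Type~I or~II via Proposition~\ref{main prop doubly}, and then derive $\beta_1+\beta_2-\alpha_1-\alpha_2=0$ to rule out Type~I via Theorem~\ref{main thm}. The setup required by Proposition~\ref{main prop doubly} is already in place: by Lemma~\ref{jichu} and Lemma~\ref{g21}, the relations $\{\Gamma_{0,0},\Gamma_{1,1},\Gamma_{1,2},\Gamma_{2,1},\Gamma_{2,2}\}$ form a non-symmetric imprimitive $4$-class association scheme with unique non-symmetric pair $\Gamma_{1,2}^{\top}=\Gamma_{2,1}$, and $(V\Gamma,\Gamma_{2,2})$ decomposes as a disjoint union of cliques (one per partite set), hence is disconnected.

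The technical heart, which replaces the role of Lemma~\ref{g2jiegou} used in the proof of Lemma~\ref{g2zzjiegou}, is to extract the intersection numbers $p_{(2,1),(2,2)}^{(1,2)}=k_{1,2}/t$ and $p_{(1,2),(2,2)}^{(1,2)}=k_{1,2}/t-1$ directly from the hypothesis $\Gamma_{2,2}\in\Gamma_{1,2}^2$, equivalently from $p_{(1,2),(1,2)}^{(2,2)}>0$. Applying Lemma~\ref{le216fj} with $(\wz{i},\wz{j})=((1,2),(2,1))$ first gives $|\Gamma_{2,1}(x)\cap V|=p_{(2,1),(2,2)}^{(1,2)}=k_{1,2}/t>0$ on every partite set $V$ meeting $\Gamma_{1,2}(x)$, so such $V$ also meets $\Gamma_{2,1}(x)$. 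A symmetric application with $(\wz{i},\wz{j})=((2,1),(1,2))$, combined with Lemma~\ref{jb}~\ref{jb-2} to equate $p_{(1,2),(2,2)}^{(2,1)}$ with $p_{(2,1),(2,2)}^{(1,2)}$, matches the partite-set counts of $\Gamma_{1,2}(x)$ and $\Gamma_{2,1}(x)$ and yields $|\Gamma_{1,2}(x)\cap V|=k_{1,2}/t$ on the same partite sets. Lemma~\ref{main3''} applied with $\wz{i}=(1,2)$, together with Lemma~\ref{g21} (which says $(2,2)$ is the only two-way distance with both entries exceeding $1$), then produces $p_{(1,2),(2,2)}^{(1,2)}=k_{1,2}/t-1$. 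The delicate step here, and the main obstacle relative to Lemma~\ref{g2zzjiegou}, is the matching of partite-set counts: under the stronger hypothesis $\Gamma_{2,2}\in\Gamma_{1,2}\Gamma_{1,1}$ it is immediate, whereas under $\Gamma_{2,2}\in\Gamma_{1,2}^2$ it must be forced by the two-sided use of Lemma~\ref{le216fj} bridged by Lemma~\ref{jb}~\ref{jb-2}.

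With these two intersection numbers in hand, the remainder of the argument is a verbatim transcription of Lemma~\ref{g2zzjiegou}: summing $\sum_{j}p_{(2,1),j}^{(1,2)}=\sum_{j}p_{(1,2),j}^{(1,2)}=k_{1,2}$ via Lemma~\ref{jb}~\ref{jb-3} and substituting the computed values produces
\[
p_{(2,1),(1,1)}^{(1,2)}+p_{(2,1),(2,1)}^{(1,2)}=p_{(1,2),(1,1)}^{(1,2)}+p_{(1,2),(1,2)}^{(1,2)},
\]
which via Lemma~\ref{jb}~\ref{jb-2} and commutativity is exactly $\beta_1+\beta_2-\alpha_1-\alpha_2=0$. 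Since $r\geq 2$, Theorem~\ref{main thm} rules out Type~I (where this quantity equals $r$), forcing $\Gamma$ to be of Type~II.
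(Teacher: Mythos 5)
Your proof is correct and follows essentially the same route as the paper's: Lemma \ref{le216fj} and Lemma \ref{main3''} (with Lemma \ref{g21}) give $p_{(2,1),(2,2)}^{(1,2)}=k_{1,2}/t$ and $p_{(1,2),(2,2)}^{(1,2)}=k_{1,2}/t-1$, the two valency sums from Lemma \ref{jb} \ref{jb-3} then yield $\beta_1+\beta_2=\alpha_1+\alpha_2$, and Proposition \ref{main prop doubly} together with Theorem \ref{main thm} forces Type II. The only difference is organizational: the paper first splits on whether $\Gamma_{2,2}\in\Gamma_{1,2}\Gamma_{1,1}$ (deferring that case to Lemma \ref{g2zzjiegou}), whereas you correctly observe that the computation goes through uniformly from $\Gamma_{2,2}\in\Gamma_{1,2}^2$ alone, since the extra hypothesis in the paper's second case is only used to derive \eqref{bujia1}, which is never used afterwards.
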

\begin{proof}
	If $\Gamma_{2,2}\in \Gamma_{1,2}\Gamma_{1,1}$, from Lemma \ref{g2zzjiegou}, then the desired result follows. Suppose $\Gamma_{2,2}\notin \Gamma_{1,2}\Gamma_{1,1}$.
	Pick a partite set $V$ such that $\Gamma_{1,2}(x)\cap V\ne \emptyset$. Since $\Gamma_{2,2}\in \Gamma_{1,2}^2$,   from Lemma \ref{le216fj}, we have  $|\Gamma_{2,1}(x)\cap V|=p_{(2,1),(2,2)}^{(1,2)}=\frac{k_{1,2}}{t}$. In view of Lemmas \ref{main3''} and \ref{g21}, one obtains $p_{(1,2),(2,2)}^{(1,2)}=p_{(2,1),(2,2)}^{(2,1)}=\frac{k_{1,2}}{t}-1$.
	
%

Since $\Gamma_{2,2}\notin \Gamma_{1,2}\Gamma_{1,1}$, from Lemma \ref{g21}, we have $\Gamma_{1,2}\Gamma_{1,1}\subseteq\{\Gamma_{1,1},\Gamma_{1,2},\Gamma_{2,1}\}$. By  Lemma \ref{jb} \ref{jb-2} and \ref{jb-3}, one has
\begin{align}
k_{1,1}=p_{(1,1),(1,2)}^{(1,2)}+p_{(1,1),(2,1)}^{(1,2)}+p_{(1,1),(1,1)}^{(1,2)}.\label{bujia1}\tag{8.7}
\end{align}
In view of Lemma \ref{g21}, we get $k_{1,2}=p_{(1,2),(1,1)}^{(1,2)}+p_{(1,2),(1,2)}^{(1,2)}+p_{(1,2),(2,1)}^{(1,2)}+p_{(1,2),(2,2)}^{(1,2)}+1$ and $k_{1,2}=p_{(2,1),(1,1)}^{(1,2)}+p_{(2,1),(1,2)}^{(1,2)}+p_{(2,1),(2,1)}^{(1,2)}+p_{(2,1),(2,2)}^{(1,2)}$.
Since $p_{(1,2),(2,2)}^{(1,2)}=\frac{k_{1,2}}{t}-1$ and $p_{(2,1),(2,2)}^{(1,2)}=\frac{k_{1,2}}{t}$, one obtains \begin{align}
(1-\frac{1}{t})k_{1,2}&=p_{(1,2),(1,1)}^{(1,2)}+p_{(1,2),(1,2)}^{(1,2)}+p_{(1,2),(2,1)}^{(1,2)},\nonumber\\
(1-\frac{1}{t})k_{1,2}&=p_{(2,1),(1,1)}^{(1,2)}+p_{(2,1),(1,2)}^{(1,2)}+p_{(2,1),(2,1)}^{(1,2)}.\nonumber
\end{align}
By Lemma \ref{jb} \ref{jb-2}, we have
\begin{align}
p_{(1,2),(1,1)}^{(1,2)}+p_{(1,2),(1,2)}^{(1,2)}=p_{(2,1),(1,1)}^{(1,2)}+p_{(2,1),(2,1)}^{(1,2)}=p_{(1,2),(1,1)}^{(2,1)}+p_{(1,2),(1,2)}^{(2,1)}.\label{bujia1'}\tag{8.8}
\end{align}

In view of Lemmas \ref{jichu} and \ref{g21},  the graph $(V\Gamma,\Gamma_{2,2})$ is  not connected,	which implies that $(V\Gamma,\{\Gamma_{0,0},\Gamma_{1,2},\Gamma_{2,1},\Gamma_{1,1},\Gamma_{2,2}\})$ is a non-symmetric impritive $4$-class association scheme.
 By Proposition  \ref{main prop doubly},  $\Gamma$ is a  doubly regular $(k,m)$-team semicomplete multipartite digraph of Type I or Type II.
By	\eqref{bujia1'} and Theorem \ref{main thm}, $\Gamma$ has Type II.
\end{proof}

\begin{lemma}\label{g23jiegouxj2}
	Let $\Gamma_{2,2}\notin \Gamma_{1,2}\Gamma_{1,1}\cup \Gamma_{1,2}^2$. Then $\Gamma$ is  isomorphic to  one of the digraphs in Theorem \ref{xushumain4} \ref{main1-fu7}.
\end{lemma}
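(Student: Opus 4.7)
The plan is to follow the same strategy as Lemmas~\ref{g2zzjiegou} and \ref{g23jiegouxj1}: apply Proposition~\ref{main prop doubly} to the $4$-class attached scheme of $\Gamma$ to reduce to Type~I or Type~II of Theorem~\ref{main thm}, and then use the hypothesis $\Gamma_{2,2}\notin \Gamma_{1,2}\Gamma_{1,1}\cup \Gamma_{1,2}^{2}$ to exclude Type~II. First I would verify that the attached scheme satisfies the hypotheses of Proposition~\ref{main prop doubly}: by Lemma~\ref{g21} the relation set is $\{\Gamma_{0,0},\Gamma_{1,1},\Gamma_{1,2},\Gamma_{2,1},\Gamma_{2,2}\}$ with exactly one non-symmetric pair $\Gamma_{1,2}^{\top}=\Gamma_{2,1}$; Lemma~\ref{jichu3} applied with $i=j=s=t=2$ gives $\Gamma_{2,2}^{2}\subseteq\{\Gamma_{0,0},\Gamma_{2,2}\}$, so the scheme is imprimitive; $\Gamma$ is strongly connected by definition; and by Lemma~\ref{jichu0} the graph $(V\Gamma,\Gamma_{2,2})$ is the disjoint union of the partite sets of $\Gamma$, hence disconnected. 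Proposition~\ref{main prop doubly} then forces $\Gamma$ to be a doubly regular $(k,m)$-team semicomplete multipartite digraph of Type~I or Type~II.

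The crucial step is to rule out Type~II. Since $2,3\in T$ we have $\Gamma_{1,2}\ne\emptyset$, so some ordered pair of partite sets $(V_i,V_j)$ satisfies $c_{ij}\geq 1$; the Type~II identity $c_{ij}=c_{ji}$ also yields $c_{ji}\geq 1$. I would choose $x\in V_i$ and $z\in V_j$ with $(x,z)\in\Gamma_{1,2}$; then $(z,x)\notin A\Gamma$ forces $x\notin A_{i}^{+}(z)$, while $|A_{i}^{+}(z)|=c_{ji}>0$ allows picking $y\in A_{i}^{+}(z)\subseteq V_i\setminus\{x\}$. Now $(z,y)\in A\vec{\Gamma}$ with $\partial(z,y)=1$ and $\partial(y,z)\geq 2$, which by Lemma~\ref{g21} forces $(z,y)\in\Gamma_{1,2}$; and by Lemma~\ref{jichu0} the distinct vertices $x,y\in V_i$ satisfy $(x,y)\in\Gamma_{2,2}$. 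The path $x\to z\to y$ then witnesses $\Gamma_{2,2}\in\Gamma_{1,2}^{2}$, contradicting the hypothesis. Hence $\Gamma$ has Type~I.

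By the description of Type~I in Theorem~\ref{main thm}, $\Gamma\simeq \Sigma\circ \overline{K}_{m}$ for some semicomplete digraph $\Sigma$. To finish I would show that $\Sigma$ is a weakly distance-regular digraph of girth $2$; this is a standard lexicographic-quotient argument parallel to the proof of Lemma~\ref{tdxw1}. The key observation is that the Type~I structure gives, for each ordered pair of distinct partite sets of $\Gamma$, a common two-way distance between their vertices---this replaces the role that Lemma~\ref{jichu2}(ii) (valid only for $s\geq 3$) plays in the proof of Lemma~\ref{tdxw1}. From it one deduces $\partial_{\Sigma}(F(u),F(v))=\partial_{\Gamma}(u,v)$ whenever $F(u)\ne F(v)$, and that the intersection numbers of $\Sigma$'s attached scheme are those of $\Gamma$ divided by $m$. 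Because $(1,1)\in\tilde{\partial}(\Gamma)$ transfers to $(1,1)\in\tilde{\partial}(\Sigma)$, $\Sigma$ has girth $2$, and $m\geq 2$ by Lemma~\ref{jichu}, so $\Gamma$ matches Theorem~\ref{xushumain4}\,\ref{main1-fu7}. I expect the main obstacle to be precisely this final quotient verification, which is routine but needs care since Lemma~\ref{tdxw1} is not directly applicable when $s=2$.
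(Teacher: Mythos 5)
Your proposal is correct, but it reaches the coclique-extension structure by a genuinely different route from the paper. The paper's own proof never invokes Proposition~\ref{main prop doubly} here: it deduces directly from the hypothesis, via Lemma~\ref{jb}~\ref{jb-2} and the first statement of Lemma~\ref{jichu2}, that $\Gamma_{1,q}\Gamma_{2,2}=\{\Gamma_{1,q}\}$ for $q\in\{1,2\}$, takes $F=\{\Gamma_{0,0},\Gamma_{2,2}\}$, and shows $\Gamma\simeq(\Gamma/F)\circ\overline{K}_{k_{2,2}+1}$ by the closure of $F$ and the displayed product relations. You instead feed the attached scheme into Proposition~\ref{main prop doubly} (whose hypotheses you correctly verify via Lemmas~\ref{g21}, \ref{jichu3} and \ref{jichu0}), obtain Type~I or~II from Theorem~\ref{main thm}, and eliminate Type~II by producing, from $c_{ij}=c_{ji}\geq 1$, a path $x\to z\to y$ of $(1,2)$-arcs with $x,y$ distinct in one partite set, which forces $\Gamma_{2,2}\in\Gamma_{1,2}^{2}$ against the hypothesis; this exclusion argument is sound (in Type~II the quantity $|A_j^+(\cdot)|$ is constant on each part, and $x\notin A_i^+(z)$ because $(x,z)$ has type $(1,2)$, so $y\ne x$). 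Both routes converge on the same final quotient verification, which you only sketch but which is carried out in the paper and is indeed routine once the two-way distance is constant on $V_i\times V_j$ (as Type~I guarantees). What the paper's route buys is brevity and independence from the team-tournament machinery of Section~3; what yours buys is uniformity with Lemmas~\ref{g2zzjiegou} and \ref{g23jiegouxj1} and a transparent explanation of why the hypothesis $\Gamma_{2,2}\notin\Gamma_{1,2}\Gamma_{1,1}\cup\Gamma_{1,2}^{2}$ is exactly what separates this case (Type~I) from the other two (Type~II). One cosmetic point: in Type~I the coclique size is the part size $r$ (equal to $k_{2,2}+1$), not the number $m$ of parts, so your $\overline{K}_m$ should be read with $m$ denoting the part size.
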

\begin{proof}
Since $\Gamma_{2,2}\notin \Gamma_{1,2}\Gamma_{1,1}\cup \Gamma_{1,2}^2$, from Lemma \ref{jb} \ref{jb-2},	one has $\Gamma_{2,1},\Gamma_{1,1}\notin \Gamma_{1,2}\Gamma_{2,2}$ and $\Gamma_{1,2},\Gamma_{2,1}\notin \Gamma_{1,1}\Gamma_{2,2}$. By the first statement in Lemma \ref{jichu2}, we have  $\Gamma_{1,q}\Gamma_{2,2}=\{\Gamma_{1,q}\}$ for $q\in \{1,2\}$.

Let $F=\{\Gamma_{0,0},\Gamma_{2,2}\}$.   Lemma \ref{g21} implies $\tilde{\partial}(\Gamma)=\{(0,0),(1,1),(1,2),(2,1),(2,2)\}$. By Lemma \ref{jichu3}, 	$F$ is closed.   We claim that $(x',y')\in\Gamma_{1,q}$ for $x'\in F(x)$ and $y'\in F(y)$ when $(x,y)\in\Gamma_{1,q}$ with $q\in \{1,2\}$. Without loss of generality, we may assume $x'\neq x$. Since $F=\{\Gamma_{0,0},\Gamma_{2,2}\}$, we have $(x',x)\in\Gamma_{2,2}$. Since  $\Gamma_{1,q}\Gamma_{2,2}=\{\Gamma_{1,q}\}$  and $(x,y)\in \Gamma_{1,q}$, one gets $(x',y)\in \Gamma_{1,q}$. If $y=y'$, then our claim is valid. If $y\neq y'$, then $(y,y')\in\Gamma_{2,2}$, and so $(x',y')\in \Gamma_{1,q}$ since $\Gamma_{1,q}\Gamma_{2,2}=\{\Gamma_{1,q}\}$ again. Thus, our claim is valid.
	
Denoted $\Sigma:=\Gamma/F$. Since $\Gamma$ is semicomplete multipartite,  from Lemma \ref{g21} and the claim, $\Gamma$ is isomorphic to $\Sigma\circ \overline{{K}}_{k_{2,2}+1}$, which implies $\tilde{\partial}_{\Gamma}(x,y)=\tilde{\partial}_{\Sigma}(F(x),F(y))$ for vertices $x,y$ with  $y\notin F(x)$. It follows that  $\tilde{\partial}(\Sigma)=\{(0,0),(1,1),(1,2),(2,1)\}$.
	
	Let $\wz{h}\in\wz{\partial}(\Sigma)$ and $(F(u),F(v))\in\Sigma_{\wz{h}}$. Then $(u,v)\in \Gamma_{\wz{h}}$. For $\wz{i},\wz{j}\in\wz{\partial}(\Sigma)$, we have
	$$P_{\wz{i},\wz{j}}(u,v)=\cup_{w\in P_{\wz{i},\wz{j}}(u,v)}F(w)
	=\cup_{F(w)\in \Sigma_{\wz{i}}(F(u))\cap\Sigma_{\wz{j}^*}(F(v))}F(w).$$
	Since $F=\{\Gamma_{0,0},\Gamma_{2,2}\}$, one gets $|F(w)|=k_{2,2}+1$, which implies $$|\Sigma_{\wz{i}}(F(u))\cap\Sigma_{\wz{j}^*}(F(v))|=\frac{p_{\wz{i},\wz{j}}^{\wz{h}}}{k_{2,2}+1}.$$ Since $\Gamma$ is a commutative weakly distance-regular digraph, $\Sigma$ is a  commutative weakly distance-regular digraph. Thus, $\Gamma$ is  isomorphic to  one of the digraphs in Theorem \ref{xushumain4} \ref{main1-fu7}.

\end{proof}	

\begin{prop}\label{g23mt}
The digraph  $\Gamma$ is  isomorphic to  one of the digraphs in Theorem \ref{xushumain4} \ref{main1-fu7} and \ref{main1-fu1}.
\end{prop}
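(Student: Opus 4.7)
The plan is to see that Proposition \ref{g23mt} is now an immediate consequence of a three-way case split, with the preceding three lemmas dispatching each case. First I would invoke Lemma \ref{g21} to note that $\tilde{\partial}(\Gamma)=\{(0,0),(1,1),(1,2),(2,1),(2,2)\}$, so in particular $\Gamma_{2,2}$ is a genuine relation of the attached scheme. Since $T=\{2,3\}$, every arc of $\Gamma$ has type $(1,1)$ or $(1,2)$; hence for any $(x,y)\in\Gamma_{2,2}$ a length-$2$ path from $x$ to $y$ exhibits $\Gamma_{2,2}$ as a constituent of $\Gamma_{1,q}\Gamma_{1,p}$ for some $q,p\in\{1,2\}$. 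In particular, at least one of the three alternatives
$\Gamma_{2,2}\in\Gamma_{1,2}\Gamma_{1,1}$, $\Gamma_{2,2}\in\Gamma_{1,2}^2$, or $\Gamma_{2,2}\notin\Gamma_{1,2}\Gamma_{1,1}\cup\Gamma_{1,2}^2$ must hold (with commutativity forcing $\Gamma_{2,2}\in\Gamma_{1,1}^2$ in the third alternative, though this will not actually be used).

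Having noted the case split, I would then apply the three preceding lemmas directly. In the first case, Lemma \ref{g2zzjiegou} identifies $\Gamma$ with a doubly regular team semicomplete multipartite digraph of Type II, matching Theorem \ref{xushumain4} \ref{main1-fu1}. In the second case, the same conclusion comes from Lemma \ref{g23jiegouxj1}. In the third case, Lemma \ref{g23jiegouxj2} identifies $\Gamma$ with an $l$-coclique extension of a semicomplete weakly distance-regular digraph of girth $2$, matching Theorem \ref{xushumain4} \ref{main1-fu7}. Since the three cases exhaust all possibilities, the proposition follows; the substantive combinatorial work, including the identification of Type II through Proposition \ref{main prop doubly} and Theorem \ref{main thm}, was already carried out inside Lemmas \ref{g2zzjiegou}--\ref{g23jiegouxj2}, so no new obstacle remains at this stage.
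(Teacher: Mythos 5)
Your proposal is correct and matches the paper's proof, which likewise disposes of the proposition by the trivially exhaustive case split on whether $\Gamma_{2,2}\in\Gamma_{1,2}\Gamma_{1,1}$, $\Gamma_{2,2}\in\Gamma_{1,2}^2$, or neither, citing Lemmas \ref{g2zzjiegou}--\ref{g23jiegouxj2} respectively. The extra justification you give for exhaustiveness is harmless but unnecessary, since the three alternatives cover all possibilities by pure logic.
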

\begin{proof}
By Lemmas \ref{g2zzjiegou}--\ref{g23jiegouxj2}, the desired result follows.
\end{proof}

\section{The proof of Theorem \ref{xushumain4}}
\begin{lemma}\label{mainfan}
If $\Sigma$ is a  semicomplete weakly distance-regular digraph with girth $3$, then $\Sigma\circ C_4$ is weakly distance-regular.
\end{lemma}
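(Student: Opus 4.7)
The plan is to compute $\tilde{\partial}(\Sigma \circ C_4)$ explicitly, then verify directly that the configuration $(V(\Sigma\circ C_4), \{(\Sigma\circ C_4)_{\tilde{i}}\}_{\tilde{i}\in\tilde{\partial}(\Sigma\circ C_4)})$ is a non-symmetric association scheme. Since $\Sigma$ is semicomplete of girth $3$, $\tilde{\partial}(\Sigma)=\{(0,0),(1,2),(2,1)\}$, and directly $\tilde{\partial}(C_4)=\{(0,0),(1,3),(3,1),(2,2)\}$. For vertices $(u,a),(v,b)$ of $\Sigma\circ C_4$, the lexicographic definition together with semicompleteness of $\Sigma$ gives $\tilde{\partial}((u,a),(v,b))=\tilde{\partial}_{\Sigma}(u,v)\in\{(1,2),(2,1)\}$ whenever $u\ne v$. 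When $u=v$ and $a\ne b$, I claim $\tilde{\partial}((u,a),(u,b))=\tilde{\partial}_{C_4}(a,b)$: the upper bound is obvious from paths inside the fiber $\{u\}\times VC_4$, while the lower bound follows because any shortcut $\big((u,a),(w,c),(u,b)\big)$ of length $2$ through a different fiber would require $(u,w)$ and $(w,u)$ both to be arcs of $\Sigma$, contradicting girth $3$. Hence $\tilde{\partial}(\Sigma\circ C_4)=\{(0,0),(1,2),(2,1),(1,3),(3,1),(2,2)\}$ and each relation is explicitly described in terms of $\Sigma$ and $C_4$.

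With the relations in hand, the structural observation is that the three same-fiber relations $(1,3),(3,1),(2,2)$ are just the $C_4$-relations carried inside each fiber $\{u\}\times VC_4$, while the two between-fiber relations $(1,2),(2,1)$ consist of all pairs between distinct fibers $\{u\}\times VC_4$ and $\{v\}\times VC_4$ according as $\tilde{\partial}_{\Sigma}(u,v)=(1,2)$ or $(2,1)$. In other words, $\mathfrak{X}(\Sigma\circ C_4)$ is the wreath product of the schemes $\mathfrak{X}(\Sigma)$ and $\mathfrak{X}(C_4)$. Verifying the intersection-number axiom reduces to a case analysis: for a pair $(x,y)=((u,a),(v,b))$ in a fixed relation and relations $\tilde{j},\tilde{k}$, every $z=(w,c)\in P_{\tilde{j},\tilde{k}}(x,y)$ is classified by whether $w=u$, $w=v$, or $w\notin\{u,v\}$, and in each subcase the count factors as a product of an intersection number (or a valency) of $\mathfrak{X}(\Sigma)$ and one of $\mathfrak{X}(C_4)$.

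The main obstacle is the bookkeeping in the mixed configurations in which one of $\tilde{j},\tilde{k}$ is a same-fiber relation and the other is a between-fiber relation; there one must check that the number of valid outer coordinates $w$ and of inner coordinates $c$ factor independently of the choice of representatives. Once this case analysis is carried out, every intersection number is well-defined and independent of the chosen pair, so the configuration is a non-symmetric $5$-class association scheme, and $\Sigma\circ C_4$ is weakly distance-regular, as claimed.
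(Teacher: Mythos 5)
Your proposal is correct and follows essentially the same route as the paper: first identify $\tilde{\partial}(\Sigma\circ C_4)$ by showing between-fiber two-way distances come from $\Sigma$ and within-fiber ones from $C_4$ (using semicompleteness and the absence of digons in $\Sigma$), then verify the intersection numbers by a direct case analysis. Your observation that the attached scheme is the wreath product of $\mathfrak{X}(\Sigma)$ and $\mathfrak{X}(C_4)$ is a nice conceptual packaging, but since you still carry out the verification by hand rather than citing a general closure result for wreath products, the substance of the argument coincides with the paper's.
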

\begin{proof}
By \cite[Proposition 3.5]{YYF1}, $\tilde{\partial}(\Sigma)=\{(0,0),(1,2),(2,1)\}$.  Let $\Gamma=\Sigma\circ C_4$. Since $\tilde{\partial}(C_4)=\{(0,0),(1,3),(3,1),(2,2)\}$, we have
	\begin{align}
	\tilde{\partial}_{\Gamma}((x,u),(y,v))&=\tilde{\partial}_{\Sigma}(x,y)~{\rm for~all}~(x,u),(y,v)\in V\Gamma~{\rm with}~x\neq y\label{nec-1}\tag{9.1},\\
	\tilde{\partial}_{\Gamma}((x,u),(x,v))&=\tilde{\partial}_{C_4}(u,v)~{\rm for~all}~(x,u),(x,v)\in V\Gamma.\label{nec-2}\tag{9.2}
	\end{align}
	Then $\tilde{\partial}(\Gamma)=\tilde{\partial}(\Sigma)\cup \tilde{\partial}(C_4)$.  It suffices to show that $|P_{\wz{i},\wz{j}}((x,u),(y,v))|$ only depends on $\tilde{i},\tilde{j},\tilde{\partial}_{\Gamma}((x,u),(y,v))\in\tilde{\partial}(\Sigma)\cup \tilde{\partial}(C_4)$. Assume that $P_{\wz{i},\wz{j}}((x,u),(y,v))\neq\emptyset$ and $(x,u)\neq(y,v)$.
	
Suppose  $\tilde{\partial}_{\Gamma}((x,u),(y,v))\in \tilde{\partial}(\Sigma)$. Since $\tilde{\partial}(\Sigma)\cap \tilde{\partial}(C_4)=\{(0,0)\}$, from \eqref{nec-1}, we get $x\ne y$.   If $\tilde{i}$ or $\tilde{j}\in \tilde{\partial}(C_4)$, then $\tilde{j}=\tilde{\partial}_{\Sigma}((x,u),(y,v))$ or $\tilde{i}=\tilde{\partial}_{\Sigma}((x,u),(y,v))$ since $P_{\wz{i},\wz{j}}((x,u),(y,v))\neq\emptyset$, which implies
	$P_{\wz{i},\wz{j}}((x,u),(y,v))=\{(x,w)\mid\tilde{\partial}_{C_4}(u,w)=\tilde{i}\}$ or $P_{\wz{i},\wz{j}}((x,u),(y,v))=\{(y,w)\mid\tilde{\partial}_{C_4}(w,v)=\tilde{j}\}$
	by \eqref{nec-2}. If $\tilde{i},\tilde{j}\notin\tilde{\partial}(C_4)$,  from \eqref{nec-1}, then
	$|P_{\wz{i},\wz{j}}((x,u),(y,v))|=4|\{z\in V\Sigma\mid \tilde{\partial}_{\Sigma}(x,z)=\tilde{i}, \tilde{\partial}_{\Sigma}(z,y)=\tilde{j}\}|$ since $\tilde{\partial}(\Sigma)\cap \tilde{\partial}(C_4)=\{(0,0)\}$.
	
	Suppose  $\tilde{\partial}_{\Gamma}((x,u),(y,v))\in \tilde{\partial}(C_4)$. Since  $\tilde{\partial}(\Sigma)\cap \tilde{\partial}(C_4)=\{(0,0)\}$, from \eqref{nec-2}, we get $x=y$.  If $\tilde{i}\in \tilde{\partial}(C_4)$, then   $\tilde{j}\in \tilde{\partial}(C_4)$
	since $P_{\wz{i},\wz{j}}((x,u),(y,v))\neq\emptyset$, which implies
	 $P_{\wz{i},\wz{j}}((x,u),(y,v))=	\{(x,w)\mid \tilde{\partial}_{C_4}(u,w)=\tilde{i}, \tilde{\partial}_{C_4}(w,v)=\tilde{j}\}$ by \eqref{nec-2}. If $\tilde{i}\notin \tilde{\partial}(C_4)$, then $\tilde{j}=\tilde{i}^*$, which implies $|P_{\wz{i},\wz{j}}((x,u),(y,v))|=|\{z\in V\Sigma\mid\tilde{\partial}_{\Sigma}(x,z)=\tilde{i}\}|$ by \eqref{nec-1}.
	
Since $\Sigma$ and $C_4$ are weakly distance-regular, by above argument, $|P_{\wz{i},\wz{j}}((x,u),(y,v))|$ only depends on $\tilde{i},\tilde{j},\tilde{\partial}((x,u_x),(y,v_y))$, which implies that $\hat{\Gamma}$ is weakly distance-regular.
\end{proof}

\begin{proof}[Proof of Theorem \ref{xushumain4}]
	Routinely, all digraphs in Theorem \ref{xushumain4}  are semicomplete multipartite. By \cite[Proposition 2.4 and Theorems 1.1, 3.1]{KSW03} and Lemma \ref{mainfan}, all digraphs in Theorem \ref{xushumain4} are commutative weakly distance-regular digraphs.

	Now we  prove the necessity.   Proposition \ref{g234mprop} implies $|T|=1$, $T=\{3,4\}$ or $T=\{2,3\}$.  If $|T|=1$, by Proposition \ref{Tjishu1},  then $\Gamma$ is  isomorphic to  one of the digraphs in Theorem \ref{xushumain4} \ref{main1-fu7} and \ref{main1-fu5}.
If $T=\{3,4\}$, by Proposition \ref{t34xjia}, then $\Gamma$ is isomorphic to one of the digraphs in Theorem \ref{xushumain4} \ref{main1-fu2} and \ref{main1-fu0}.
If $T=\{2,3\}$, by Proposition \ref{g23mt}, then $\Gamma$ is  isomorphic to  one of the digraphs in Theorem \ref{xushumain4} \ref{main1-fu7} and \ref{main1-fu1}.
\end{proof}

\section*{Acknowledgements}

Y. Yang is supported by NSFC (12101575),  K. Wang is supported by the National Key R$\&$D Program of China (No.~2020YFA0712900) and NSFC (12071039, 12131011).

\section*{Data Availability Statement}

No data was used for the research described in the article.

	\end{document}